\newcommand{\N}{{\mathbb{N}}}
\newcommand{\Z}{{\mathbb{Z}}}
\newcommand{\ol}{\overline}
\newcommand{\uloopr}[1]{\ar@'{@+{[0,0]+(-4,5)}@+{[0,0]+(0,10)}@+{[0,0] +(4,5)}}^{#1}}
\newcommand{\uloopd}[1]{\ar@'{@+{[0,0]+(5,4)}@+{[0,0]+(10,0)}@+{[0,0]+ (5,-4)}}^{#1}}
\newcommand{\dloopr}[1]{\ar@'{@+{[0,0]+(-4,-5)}@+{[0,0]+(0,-10)}@+{[0, 0]+(4,-5)}}_{#1}}
\newcommand{\dloopd}[1]{\ar@'{@+{[0,0]+(-5,4)}@+{[0,0]+(-10,0)}@+{[0,0 ]+(-5,-4)}}_{#1}}
\newcommand{\luloop}[1]{\ar@'{@+{[0,0]+(-8,2)}@+{[0,0]+(-10,10)}@+{[0, 0]+(2,2)}}^{#1}}
\newtheorem{lem}{Lemma}[section]
\newtheorem{corol}[lem]{Corollary}
\newtheorem{theor}[lem]{Theorem}
\newtheorem{prop}[lem]{Proposition}
\newtheorem{rema}[lem]{Remark}
\newtheorem{defi}[lem]{Definition}
\newtheorem{point}[lem]{}
\newtheorem*{nota}{Notation}
\newcommand{\dnw}{\mathbin{\downarrow}}
\newcommand{\Ma}{{\rm Max}}
\DeclareMathOperator{\rL}{L}
\numberwithin{equation}{section}
\begin{document}

\title[Refinement Monoids]{Primely generated refinement monoids}
\author{P. Ara}
\address{Departament de Matem\`atiques, Universitat Aut\`onoma de Barcelona,
08193 Bellaterra (Barcelona), Spain.} \email{para@mat.uab.cat}
\author{E. Pardo}
\address{Departamento de Matem\'aticas, Facultad de Ciencias, Universidad de C\'adiz,
Campus de Puerto Real, 11510 Puerto Real (C\'adiz),
Spain.}\email{enrique.pardo@uca.es}
\urladdr{https://sites.google.com/a/gm.uca.es/enrique-pardo-s-home-page/} 

\thanks{The first-named 
author was partially supported by DGI MINECO
MTM2011-28992-C02-01, by FEDER UNAB10-4E-378
``Una manera de hacer Europa", and by the
Comissionat per Universitats i
Recerca de la Generalitat de Catalunya. The second-named author was partially supported by the DGI and European Regional Development Fund, jointly, through Project MTM2011-28992-C02-02, and by PAI III grants FQM-298 and P11-FQM-7156 of the Junta de Andaluc\'{\i}a.}
\subjclass[2010]{Primary 06F20
}
\keywords{Semilattice of groups, Refinement monoid, Primely generated monoid, Primitive monoid.}
%
\dedicatory{Dedicated to the memory of Mar\'{\i}a Dolores Gordillo Romero}

\maketitle

\begin{abstract}
We extend both Dobbertin's characterization of primely generated regular refinement monoids and Pierce's characterization of primitive monoids to  
general primely generated refinement monoids. 
\end{abstract}

\section*{Introduction}\label{Introduction}

The class of abelian monoids satisfying the Riesz refinement property --refinement monoids for short-- has been largely studied over the last decades 
in connection with various problems, as non-stable K-Theory of rings and $C^*$-algebras (see e.g. \cite{Aposet, AGOP, AMFP, GPW, PaWe}), classification of Boolean 
algebras (see e.g. \cite{Ket}, \cite{Pierce}), or its own structure theory (see e.g. \cite{Brook, Dobb84, Whr3}). 
Recall that an element $p$ in a monoid $M$ is a {\it prime element} if $p$ is not invertible in $M$, and, whenever 
$p\leq a+b$ for $a,b\in M$, then either $p\leq a$ or $p\leq b$ (where $x\le y$ means that $y=x+z$ for some $z\in M$). 
The monoid $M$ is {\it primely generated} if every non-invertible element of $M$ 
can be written as a sum of prime elements. Primely generated refinement monoids enjoy important cancellation properties, such as separative cancellation and 
unperforation, as shown by Brookfield in \cite[Theorem 4.5, Corollary 5.11(5)]{Brook}. It was also shown by Brookfield that any finitely 
generated refinement monoid is automatically primely generated \cite[Corollary 6.8]{Brook}. 

\medskip

Recently, the class of refinement monoids has been separated into subclasses of {\it tame} and {\it wild} refinement monoids, where the tame ones are the direct limits
of finitely generated refinement monoids, and the rest are wild. This has been motivated by problems in non-stable K-theory, where many of the monoids that appear 
in connection with von Neumann regular rings and C*-algebras of real rank zero are indeed tame refinement monoids. 
It has been asked in \cite[Open Problem 5.3]{AraGood} whether all primely generated refinement monoids are tame. 

\medskip 

Two classes of primely generated refinement monoids have been completely classified. The first one is the class of {\it primitive monoids}, i.e. antisymmetric
primely generated refinement monoids, see \cite{Pierce}. These monoids are described by means of a set $I$ endowed with an antisymmetric transitive relation $\vartriangleleft$. 
Given such a pair $(I,\vartriangleleft)$, the primitive monoid associated to it is the monoid generated by $I$ with the relations $i+j= j$ if and only if $i\vartriangleleft j$.
We observe that this is the same as giving a partial order $\le $ on $I$ and a decomposition $I=I_{{\rm free}}\sqcup I_{{\rm reg}}$ into {\it free} and {\it regular}
elements, the free elements  corresponding to the elements $i$ such that $i\ntriangleleft i$, and the regular ones corresponding to the elements $i$ such that
$i \vartriangleleft i$. Using this structure, tameness of primitive monoids has been verified in \cite[Theorem 2.10]{AraGood}. The second class where a satisfactory 
description has been obtained is that of primely generated {\it regular} conical refinement monoids.
These monoids were characterized by Dobbertin in \cite{Dobb84} in terms of partial orders of abelian groups.\footnote{It should be pointed out that the definition of refinement monoid given in \cite{Dobb84} includes the condition 
of being {\it conical} (condition (1) in \cite[p. 166]{Dobb84}). Following modern convention, refinement monoids are defined here just in terms of the
accomplishment of the Riesz refinement property (see Section \ref{section:Isystems} for the precise definitions of these conditions).}
It has been shown by the second-named author and Wehrung \cite[Theorem 4.4]{PWunp} that all regular conical refinement monoids are tame.

\medskip 

In the present paper, we obtain a common generalization of both
results, obtaining a representation of primely generated conical refinement monoids in terms of certain partial orders of semigroups. The basic data are a poset $I$, together with a  
partition $I=I_{{\rm free}}\sqcup I_{{\rm reg}}$, a family of abelian groups $G_i$ for $i\in I_{{\rm reg}}$, and a family of semigroups of the form
$\N\times G_i$, where $G_i$ is an abelian group, for $i\in I_{{\rm free}}$ (see Definition \ref{def:I-system} below for the precise definition). To each one of these {\it $I$-systems}
$\mathcal J$ we associate a conical monoid $M(\mathcal J )$.  

\medskip

With this notation and terminology at hand, we can state the main results of the paper as follows:

\begin{theor}
 \label{thm:first-main}
 $\mbox{ }$
 
\begin{enumerate}
 \item  Given any primely generated conical refinement monoid $M$, there is a poset $I$ and an $I$-system $\mathcal J$ such that $M\cong M(\mathcal J)$.
 \item For any $I$-system $\mathcal J$, the monoid $M(\mathcal J )$ is a primely generated conical refinement monoid. Moreover, $M(\mathcal J)$ is a tame monoid.  
\end{enumerate}
\end{theor}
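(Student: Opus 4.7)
\medskip

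\noindent\textbf{Proof plan.}

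For part (1), the strategy is to construct the $I$-system intrinsically from $M$, in a way that simultaneously generalizes Pierce's analysis of primitive monoids and Dobbertin's analysis of regular conical refinement monoids. Let $P$ be the set of primes of $M$, and declare $p \sim q$ if $p \leq q$ and $q \leq p$ in the algebraic preorder. Set $I := P/{\sim}$, endowed with the induced partial order. Declare a class $i = [p]$ to belong to $I_{\mathrm{reg}}$ if some (equivalently, every) representative $p$ satisfies $2p \leq p$, and to $I_{\mathrm{free}}$ otherwise; the separative cancellation and unperforation from \cite[Theorem 4.5, Corollary 5.11]{Brook} should guarantee that this dichotomy is class-invariant. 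For each $i \in I_{\mathrm{reg}}$, the group $G_i$ is extracted as Dobbertin did, by taking formal differences of elements dominated by multiples of $p$ modulo contributions from strictly lower classes. For each $i \in I_{\mathrm{free}}$, the same procedure yields $\N \times G_i$, because the $\N$-multiplicity of $p$ in any expression cannot be absorbed in the absence of a regular element above $p$. The final step is to establish a normal-form theorem: every non-invertible $a \in M$ decomposes uniquely as a finite sum indexed by an antichain of $I$, with each summand living in the appropriate $G_i$ or $\N \times G_i$; existence will follow from primely generatedness, uniqueness from the refinement property. This produces the isomorphism $M \cong \M{\cJ}$.

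For part (2), the plan is to realize $\M{\cJ}$ from its defining presentation and verify each property in turn. Conicality is immediate from the fact that the components in distinct fibres cannot cancel. Primely generatedness holds because each $i \in I$ contributes a distinguished prime element to the generating set. The refinement property is the key technical step: given $a_1 + a_2 = b_1 + b_2$ in $\M{\cJ}$, one decomposes both sides fibre-by-fibre along the antichain of maximal support classes and refines within each fibre, which reduces either to the trivial refinement in an abelian group $G_i$ or to refinement in $\N \times G_i$, the latter being straightforward. For tameness, present $\M{\cJ}$ as a directed colimit of monoids $\M{\cJ_\alpha}$, where $\cJ_\alpha$ ranges over sub-$I$-systems supported on a finite subposet of $I$ with finitely generated subgroups in place of each $G_i$; every such $\M{\cJ_\alpha}$ is finitely generated, hence automatically a primely generated refinement monoid by \cite[Corollary 6.8]{Brook}, so $\M{\cJ}$ is tame.

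The principal obstacle is expected to be the refinement property in (2): the interplay among the partial order on $I$, the free/regular dichotomy, and the group or semigroup structures on the fibres produces a substantial case analysis, and in each case the compatibility data between fibres that form part of the $I$-system axioms must be invoked correctly. A secondary but nontrivial difficulty appears in (1), namely verifying that the data reconstructed from $M$ genuinely satisfy the axioms of an $I$-system, with honest abelian groups $G_i$ and the correct transition maps into lower fibres; separative cancellation, together with a careful analysis of how refinement in $M$ restricts to individual fibres, should be the main tools here.
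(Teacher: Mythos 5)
Your plan for part (1) follows essentially the same route as the paper: pass to the antisymmetrization $\ol{M}$ (which is primitive by Brookfield), take $I$ to be the set of primes up to mutual domination, split into free and regular according to whether $2p\le p$, identify the archimedean component of a regular prime with a group and that of a free prime with $\N\times G_p$ (the paper's Lemma \ref{lem:archcomp}, using separative cancellation), and define the transition maps by $\varphi_{pq}(x)=(p+x)-p$. One caveat: your ``normal-form theorem'' with \emph{uniqueness} holds in $\ol{M}$ (Brookfield's Theorem 5.8), not in $M$ itself; the paper instead proves injectivity of the comparison map $\M{\cJ_M}\to M$ by an explicit computation in the groups $\widehat{H}_a$ \`a la Dobbertin. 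You would need to supply an argument of that kind, since uniqueness of the decomposition in $M$ is exactly what is at stake.

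Part (2) is where your plan breaks down. First, the claim that refinement ``reduces to refinement in $G_i$ or in $\N\times G_i$, the latter being straightforward'' is false: the archimedean components of $\M{\cJ}$ do \emph{not} satisfy refinement when $I_{\mathrm{free}}\ne\emptyset$. Concretely, refining $n_1+n_2=m_1+m_2$ in $\N$ forces some off-diagonal entries $\alpha_{rs}=0$, and the corresponding group parts $\beta_{rs}\in G_i$ cannot be represented inside $M_i=\N\times G_i$; they must be pushed down to strictly smaller indices using axiom (c2) (this is the content of Lemma \ref{lem:refinement-down}). Even with that fix, a fibre-by-fibre argument only works when every upper set $I\uparrow p$ is a chain; for a general finite poset the paper must realize $\M{\cJ}$ as an iterated crowned pushout of a monoid over an unfolded poset $\mathbb F$, which is the bulk of Section \ref{sect:ref-for-fg}. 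Second, your tameness argument inverts Brookfield: \cite[Corollary 6.8]{Brook} says a finitely generated \emph{refinement} monoid is primely generated; it does not say a finitely generated monoid has refinement, so you cannot conclude that the finite-stage monoids $\M{\cJ_\alpha}$ are refinement monoids for free --- that is precisely Theorem \ref{theor:total-refinement}. Third, an arbitrary restriction of $\cJ$ to a finite subposet with finitely generated subgroups need not satisfy axiom (c2) (the group at a free index must be covered by the images of the chosen lower semigroups), nor need it admit a retraction onto the given finitely generated submonoid; the paper's Lemmas \ref{lem:enlarging-K} and \ref{lem:order-induction} exist to manufacture such a subsystem, including adjoining auxiliary minimal free indices. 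Without these three ingredients the proof of (2) does not go through.
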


Note that this result gives a complete description of primely generated conical refinement monoids. It also gives an affirmative answer to \cite[Open Problem 5.3]{AraGood}.
In the particular case where all the above groups $G_i$ are trivial, we recover Pierce's characterization of primitive monoids. 
In the case that $I_{{\rm free}}=\emptyset$, our result reduces to Dobbertin's characterization of primely generated regular conical refinement monoids. 
Theorem \ref{thm:first-main}(1) 
will be proven in Section \ref{section:theIsystemof-a-monoid} (see Theorem \ref{thm:M=M(J)}). 

\medskip

It is readily checked that $M(\mathcal J)$ is a primely generated conical monoid for every $I$-system $\mathcal J$ (Remark \ref{remark:primes}). 
It will be shown in Sections \ref{sect:ref-for-fg} and \ref{sect:ref-arbitrary} that $M(\mathcal J)$ is a tame refinement monoid, thus completing the proof of  
Theorem \ref{thm:first-main}. We point out that one of the main difficulties in showing the refinement property for $M(\mathcal J)$,  when $I_{{\rm free}} \ne \emptyset$,
lies in the fact that the archimedian components of this monoid do not satisfy refinement in general. 

\medskip

Section \ref{sect:ref-for-fg} is devoted to the proof of the refinement property of $M(\mathcal J)$ for finitely generated $I$-systems.
We show this result first in the case where all the upper subsets $I\uparrow i:=\{ j\in I: i\le j\} $, for $i\in I$,  are chains 
(Proposition \ref{prop:chins-up}), and then we 
adapt a technique introduced in \cite{Aposet} to solve the general case of a finitely generated $I$-system (Theorem \ref{theor:total-refinement}).
We believe that, in analogy with \cite{Aposet}, the methods developed in this section will be useful in the study of the realization problem for 
finitely generated conical refinement monoids (see \cite{Areal} for a survey on this problem).

\medskip

For a general $I$-system $\mathcal J$, we prove in Section \ref{sect:ref-arbitrary} that $M(\mathcal J)$ can be written as a direct limit of monoids of the form 
$M(\mathcal J' )$, where $\mathcal J'$ are finitely generated systems. By Proposition \ref{prop:finitely-generated} and Theorem \ref{theor:total-refinement}, 
all these monoids $M(\mathcal J ')$ are 
finitely generated conical refinement monoids, so we obtain at once that $M(\mathcal J)$ is a refinement monoid, and that it is tame.    

\medskip

We work with conical monoids because these are the monoids that appear in
non-stable K-theory. However, similar results can be obtained for non-conical refinement monoids $M$ 
by considering the conical refinement monoid $M\sqcup \{ 0 \}$ obtained 
by adjoining a new zero element to $M$.

\section{$I$-systems and their monoids}
\label{section:Isystems}

In this section we will define $I$-systems --a structure extending the notion of partial order of groups-- by replacing the groups by some special semigroups. Using this structure,
we will construct an associated monoid in a way that extends Dobbertin's construction \cite{Dobb84} and Pierce's construction \cite{Pierce}. First, we will recall some basic definitions. 

Given a poset $(I, \leq)$, we say that a subset $A$ of $I$ is a {\it lower set} if $x\leq y$ in $I$ and $y\in A$ implies $x\in A$. For any $i\in I$, 
we will denote by $I\downarrow i=\{x\in I : x\leq i\}$ the lower subset generated by $i$. 
We will write $x<y$ if $x\le y$ and $x\ne y$. 

All semigroups considered in this paper are abelian. We will denote by $\N$ the semigroup of positive integers, and by $\Z^+$ the monoid of non-negative integers.  

Given an abelian monoid $M$, we set $M^*:=M\setminus\{0\}$. We say that $M$ is {\it conical} if $M^*$ is a semigroup, that is, if, for all $x$, $y$ in $M$, $x+y=0$ only when $x=y=0$. We say that $M$ is {\it separative} 
provided $2x=2y=x+y$ always implies $x=y$; there are a number of equivalent formulations of this property, see e.g. \cite[Lemma 2.1]{AGOP}.
We say that  $M$ is
a {\it refinement monoid} if, for all $a$, $b$, $c$, $d$ in
$M$ such that $a+b=c+d$, there exist $w$, $x$, $y$, $z$ in $M$ such
that $a=w+x$, $b=y+z$, 
$c=w+y$ and $d=x+z$.  It will often be convenient to present this
situation in the form of a diagram, as follows:
$$\mbox{\begin{tabular}{|l|l|l|}
\cline{2-3}
\multicolumn{1}{l|}{} & ${c}$ & ${d}$ \\ \hline
${a}$ & ${w}$ & ${x}$ \\ \hline
${b}$ & ${y}$ & ${z}$ \\ \hline
\end{tabular}}.$$  
If $x, y\in M$, we write $x\leq y$ 
if there exists $z\in M$  such that $x+z = y$.
Note that $\le$ is a translation-invariant pre-order on $M$, called the {\it algebraic pre-order} of $M$. 
An element $x\in M$ is {\it regular} if $2x\leq x$. An element $x\in M$ is an {\it idempotent} if $2x= x$. An element 
$x\in M$ is {\it free} if $nx\leq mx$ implies $n\leq m$. Any element of a separative monoid is either free or regular.
In particular, this is the case for any primely generated refinement monoid, by \cite[Theorem 4.5]{Brook}.

A subset $S$ of a monoid $M$ is called an {\it order-ideal} if $S$ is a subset of $M$ containing $0$,
closed under taking sums and summands within $M$;  that is,
$S$ is a submonoid such that, for all $x \in M$ and $e
\in S$, if $x\leq e$ then $x \in S$.  If $(S_k)_{k\in \Lambda}$ is a family of (commutative) semigroups, $\bigoplus _{k\in \Lambda} S_k$ (resp. $\prod _{k\in \Lambda} S_k$) 
stands for the coproduct (resp. the product) of the semigroups $S_k$, $k\in \Lambda$, in the category of commutative
semigroups. If the semigroups $S_k$ are subsemigroups of a semigroup $S$, we will denote by $\sum\limits_{k\in \Lambda} S_k$ the subsemigroup of $S$ generated by $\bigcup_{k\in \Lambda}S_k$.
Note that $\sum\limits_{k\in \Lambda} S_k$ is the image of the canonical map $\bigoplus_{k\in \Lambda} S_k\to S$. We will use the notation $\langle X\rangle $ to denote the semigroup
generated by a subset $X$ of a semigroup $S$.

Given a semigroup $M$, we will denote by $G(M)$ the Grothendieck group of $M$. There exists a semigroup homomorphism $\psi_M\colon M\to G(M)$
such that for any semigroup homomorphism $\eta \colon M\to H$ to a group $H$ there is a unique group homomorphism $\widetilde{\eta}\colon G(M)\to H$ such that 
$\widetilde{\eta}\circ \psi_M= \eta$. $G(M)$ is abelian and it is generated as a group by $\psi (M)$.  If $M$ is already a group then $G(M)= M$. If $M$ is a semigroup of the form 
$\N\times G$, where $G$ is an abelian group, then $G(M)= \Z\times G$. In this case, we will view $G$ as a subgroup of $\Z\times G$ by means of the identification $g\leftrightarrow (0,g)$. These are the only cases where we will need to consider Grothendieck groups in this article.

\vspace{.3truecm}

The following definition is crucial for this work:

\begin{defi}
\label{def:I-system} {\rm Let $I= (I,\le )$ be a poset.  An {\it $I$-system} $\mathcal{J}=\left(I, \leq , (G_i)_{i\in I}, \varphi_{ji} \, (i<j)\right)$ is given by the following data:
\begin{enumerate}
\item[(a)] A partition
$I=I_{free}\sqcup I_{reg}$ (we admit one of the two sets
$I_{free}$ or $I_{reg}$ to be empty).
\item[(b)] A family $\{G_i\}_{i\in I}$ of abelian groups. We adopt the following notation: 
\begin{itemize}
\item[(1)] For $i\in I_{reg}$, set $M_i = G_i$, and $\widehat{G}_i=G_i=M_i$.
\item[(2)] For $i\in I_{free}$, set $M_i=\N \times G_i$, and $\widehat{G}_i= \Z\times G_i$
\end{itemize}
Observe that, in any case, $\widehat{G}_i$ is the Grothendieck group of $M_i$.
\item[(c)]
A family of  semigroup homomorphisms $\varphi _{ji}\colon M_i\to
G_j$ for all $i<j$, to whom we associate, for all $i<j$, the unique extension $\widehat{\varphi}_{ji}\colon \widehat{G}_i \to G_j$ of $\varphi _{ji}$ to a group homomorphism
from the Grothendieck group of $M_i$  to $G_j$ (we look at these
maps as maps from $\widehat{G}_i$ to $\widehat{G}_j$). We require that the family $ \{ \varphi_{ji} \}$ satisfies the following conditions:
\begin{itemize}
\item[(1)]  The assignment
$$
\left\{
\begin{array}{ccc}
i & \mapsto  &  \widehat{G}_i   \\
 (i<j) & \mapsto  &   \widehat{\varphi}_{ji}
\end{array}
\right\}
$$ 
defines a functor from the
category $I$ to the category of abelian groups (where we set $\widehat{\varphi}_{ii}= {\rm  id}_{\widehat{G}_i}$
for all $i\in I$).
\item[(2)]  For each $i\in I_{free}$ we have that the map
$$\bigoplus _{k<i} \varphi _{ik}\colon  \bigoplus _{k<i}  M_k \to G_i$$
is surjective.
\end{itemize}
\end{enumerate}
We say that an $I$-system $\mathcal  J =  \left(I, \leq , (G_i)_{i\in I}, \varphi_{ji}\, (i<j)\right)$ is {\it finitely generated} 
in case $I$ is a finite poset and all the groups $G_i$ are finitely generated.} 
\end{defi}

\begin{rema}
 \label{rem:minimals-in-I} 
{\rm  Note that, if $i\in I_{{\rm free}}$ and $i$ is a minimal element of $I$, then $G_i=\{ e_i \}$, and so $M_i=\N$, by condition (c2) in Definition \ref{def:I-system}.
Indeed the map appearing there in this special case should be interpreted as the map $\{0\}\to  \{ e_i\} $. For $i$ being not minimal, the inclusion of $\{ 0\}$ in the domain of the map 
makes no difference.}
 \end{rema}

Given a poset $I$, and an $I$-system $\mathcal{J}$, we construct a semilattice of groups based on the partial order 
of groups $(I,  \le, \widehat{G}_i)$, by following the model introduced in  \cite{Dobb84}. Let $A(I)$ be
the semilattice (under set-theoretic union) of all the finitely generated lower subsets
of $I$. These are precisely the lower subsets $a$ of $I$ such that the set $\Ma (a)$ of maximal elements of $a$ is finite and each element of $a$ is under some of the maximal ones.
In case $I$ is finite, and since the intersection of lower subsets of $I$ is again a lower subset, $A(I)$ is a lattice. 
For any $a\in A(I)$, we define $\widehat{H}_a= \bigoplus _{i\in a}
\widehat{G}_i$, and we define $f^b_a$ $(a\subseteq b)$ to be the canonical
embedding of $\widehat{H}_a$ into $\widehat{H}_b$. Given $a\in A(I), i\in a$ and $ u\in \widehat{G}_i$, we define $\chi (a,i,u)\in \widehat H _a$ by 
$$\chi (a,i,u)_j=
\left\{
\begin{array}{cc}
  u   & \mbox{if }j=i,  \\
   0_j  &   \mbox{if }j\ne i. 
\end{array}
\right.
$$
Let $U_a$ be the subgroup of $\widehat{H}_a$
generated by the set 
$$\{\chi (a,i,u) -\chi (a,j,\widehat{\varphi}_{ji}(u)) :  i<j\in \Ma (a), u\in \widehat{G}_i\}.$$

Now, for any $a\in A(I)$, set $\widetilde{G}_a= \widehat{H}_a/U_a$, and let $\Phi_a:\widehat{H}_a\rightarrow \widetilde{G}_a$ be the natural onto map. Then, for any $a\subseteq b\in A(I)$ we have that $f_a^b(U_a)\subseteq U_b$, so that there exists a unique homomorphism $\widetilde{f}_a^b: \widetilde{G}_a\rightarrow \widetilde{G}_b$ which makes the diagram 
$$
\xymatrix{\widehat{H}_a\ar[r]^{f_a^b} \ar[d]_{\Phi_a} & \widehat{H}_b\ar[d]^{\Phi_b}\\
\widetilde{G}_a \ar[r]_{\widetilde{f}_a^b}  & \widetilde{G}_b}
$$
commutative. Hence,  $(A(I), (\widetilde{G}_a)_{a\in A(I)}, \widetilde{f}_a^b (a\subset b))$ is a semilattice of groups. Thus, the set 
$$\widetilde{M}(\mathcal J ):=\bigsqcup\limits_{a\in A(I)}\widetilde{G}_a,$$ endowed with the operation 
$x+y:= \widetilde{f}_a^{a\cup b}(x)+\widetilde{f}_b^{a\cup b}(y)$ for any $a,b\in A(I)$ and any $x\in \widetilde{G}_a, y\in \widetilde{G}_b$, 
is a primely generated regular refinement monoid by \cite[Proposition 1]{Dobb84}. Note that
$\widehat{H}_{\emptyset}=\widetilde{G}_{\emptyset}=\{ 0 \}$. We refer the reader to \cite{Dobb84} for further details on this
construction.

In order to attain our goal, we define a convenient substructure of $\widehat{H}_a$. Let $H_a$ be
the subsemigroup of $\widehat{H}_a$ defined by
$$H_a= \left\{ (z_i)_{i\in a} \in \widehat{H}_a : z_i\in \left\{ 
\begin{array}{ccc}
 \N\times G_i  & \text{ for }  &  i\in \Ma
(a)_{{\rm free}} \\
 \{ (0,0_i) \} \cup (\N\times G_i) & \text{ for }  &  i\in a_{{\rm free}}\setminus \Ma (a)_{{\rm free}} 
\end{array}
\right.
 \right\}.
$$

In what follows, whenever $i<j \in I$ with $j$ a free element, $x= (n,g)\in \N \times G_j$ and $y\in M_i$,
we will see $x+\varphi _{ji}(y)$ as the element $(n, g+\varphi _{ji}(y))\in \N\times G_j$. This is coherent with our identification of $G_j$ as
the subgroup $\{0\} \times G_j$ of $\widehat{G}_j= \Z\times G_j$. 

\begin{lem}
\label{lem:well-defsum} If $x\in H_a$ and $y\in H_b$, then
$$f_a^{a\cup b} (x) +f_b^{a\cup b} (y) \in H_{a\cup b}.$$
\end{lem}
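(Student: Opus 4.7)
The plan is to unfold the definitions and carry out a short case analysis. Set $c := a\cup b$ and $z := f_a^c(x)+f_b^c(y)\in\widehat{H}_c$. Since each $f_a^c$ is the canonical embedding (i.e., extension by $0$ on coordinates outside $a$), for every $i\in c$ the component $z_i$ equals $x_i+y_i$ (with $x_i=0\in\widehat{G}_i$ if $i\notin a$, and similarly for $y$). I need to verify, for every $i\in c$, that $z_i$ lies in the prescribed subset of $\widehat{G}_i$; this is automatic when $i$ is regular, so only the free coordinates require attention.

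The key combinatorial observation is the following: if $i\in\Ma(c)_{\mathrm{free}}$ and $i\in a$, then in fact $i\in\Ma(a)_{\mathrm{free}}$. Indeed, any $j\in a$ with $j>i$ would also lie in $c$, contradicting the maximality of $i$ in $c$. The symmetric statement holds with $a$ replaced by $b$. I would state and use this at the start of the case analysis.

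Now I would go through the two cases defining membership in $H_c$. First, suppose $i\in\Ma(c)_{\mathrm{free}}$. Then $i\in a$ or $i\in b$; by the observation above, $x_i\in\N\times G_i$ in the first case (and $y_i=(0,0_i)$ if moreover $i\notin b$), and $y_i\in\N\times G_i$ in the second. Writing $z_i=x_i+y_i$ in $\Z\times G_i$, at least one summand has first coordinate in $\N$ and the other has first coordinate in $\Z^+$ (being either $(0,0_i)$ or of the form $(n,g)$ with $n\ge 1$), so $z_i\in\N\times G_i$, as required. Second, suppose $i\in c_{\mathrm{free}}\setminus\Ma(c)_{\mathrm{free}}$. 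Then whenever $i\in a$ the component $x_i$ lies in $\{(0,0_i)\}\cup(\N\times G_i)$ (regardless of whether $i$ is maximal in $a$), and $x_i=(0,0_i)$ otherwise; the same holds for $y_i$. Since the sum of two elements whose first coordinates are non-negative integers again has non-negative first coordinate, and is $(0,0_i)$ precisely when both summands are, we conclude $z_i\in\{(0,0_i)\}\cup(\N\times G_i)$, as required.

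The proof is essentially bookkeeping, and the only substantive point is the observation that maximality in $c$ forces maximality in the piece where the index lives. I would expect no real obstacles; I would just write out the cases carefully and conclude that $z\in H_c=H_{a\cup b}$.
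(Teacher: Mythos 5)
Your proof is correct and is exactly the paper's argument written out in detail: the paper's one-line proof cites the fact $\Ma(a\cup b)\subseteq\Ma(a)\cup\Ma(b)$, which is precisely your key observation that maximality in $a\cup b$ forces maximality in whichever of $a$, $b$ the index belongs to, and the rest is the same coordinatewise bookkeeping. No issues.
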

\begin{proof}
This follows from the fact that $\Ma (a\cup b) \subseteq \Ma (a )
\cup \Ma (b) $.
\end{proof}

Lemma \ref{lem:well-defsum} implies that we can define a semilattice of semigroups
$$(A(I), (H_a)_{a\in A(I)}, f_a^b (a\subset b)).$$ 
Now, we will construct a monoid associated to it. For this, consider the congruence $\sim $ defined on $H_a$, for $a\in A(I)$, given by
$$x\sim y \iff x-y\in U_a .$$

\begin{lem}
\label{lem:equivrels} Let $a\in A(I)$. The congruence $\sim $ on
$H_a$ agrees with the congruence $\equiv$, generated by the pairs $(x+\chi
(a,i,\alpha), x+\chi (a,j, \varphi _{ji}(\alpha) ))$, for $x\in
H_a$,  $i<j\in \Ma (a)$ and $\alpha \in M_i$.
\end{lem}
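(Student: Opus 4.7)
The plan is to prove the two inclusions $\equiv \subseteq \sim$ and $\sim \subseteq \equiv$ separately. The first is immediate: each generating pair $(x+\chi(a,i,\alpha), x+\chi(a,j,\varphi_{ji}(\alpha)))$ has difference $\chi(a,i,\alpha) - \chi(a,j,\widehat{\varphi}_{ji}(\alpha))$, which is a defining generator of $U_a$ (viewing $\alpha \in M_i \subseteq \widehat{G}_i$), and hence the pair lies in $\sim$. Since $\sim$ is itself a congruence on $H_a$, it must contain the congruence $\equiv$ generated by those pairs.

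For the reverse inclusion, the first step is to show that every element of $U_a$ can be written as a $\Z$-linear combination of \emph{positive} generators, namely those of the form $\chi(a,i,\alpha) - \chi(a,j,\varphi_{ji}(\alpha))$ with $\alpha \in M_i$. This is automatic when $i \in I_{\mathrm{reg}}$, since then $\widehat{G}_i = M_i$. For $i \in I_{\mathrm{free}}$ and $u = (n,g) \in \Z \times G_i$ with $n \le 0$, I pick $\beta = (N,0) \in M_i$ with $N$ large enough that $u+\beta \in M_i$; by linearity of $\chi(a,i,\cdot)$ and $\widehat{\varphi}_{ji}$, the generator decomposes as
\[
\chi(a,i,u) - \chi(a,j,\widehat{\varphi}_{ji}(u)) = \bigl[\chi(a,i,u+\beta) - \chi(a,j,\varphi_{ji}(u+\beta))\bigr] - \bigl[\chi(a,i,\beta) - \chi(a,j,\varphi_{ji}(\beta))\bigr].
\]
Hence, for $x,y \in H_a$ with $x \sim y$, I obtain an expression $x-y = \sum_{\ell=1}^{L}\delta_\ell h_\ell$ with $h_\ell = \chi(a,i_\ell,\alpha_\ell) - \chi(a,j_\ell,\varphi_{j_\ell i_\ell}(\alpha_\ell))$ a positive generator and $\delta_\ell \in \{+1,-1\}$.

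I then induct on $L$. The base $L=0$ is trivial. For the inductive step with $\delta_1 = +1$, set $y' = y + h_1$. A coordinate-wise check shows $y' \in H_a$: adding $\chi(a,i_1,\alpha_1)$ keeps the $i_1$-coordinate in its allowed set, while subtracting $\chi(a,j_1,\varphi_{j_1 i_1}(\alpha_1))$ only modifies the group part at $j_1$—which is safe because $j_1 \in \Ma(a)$ forces the $\N$-part of $y_{j_1}$ (when $j_1$ is free) to be at least $1$. The equivalence $y \equiv y'$ then follows from a single generating step applied to $z := y - \chi(a,j_1,\varphi_{j_1 i_1}(\alpha_1))$, which by the same analysis belongs to $H_a$: indeed $z + \chi(a,j_1,\varphi_{j_1 i_1}(\alpha_1)) = y$ and $z + \chi(a,i_1,\alpha_1) = y'$. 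Since $x - y' = \sum_{\ell \ge 2} \delta_\ell h_\ell$ uses only $L-1$ generators, the inductive hypothesis yields $x \equiv y'$, and thus $x \equiv y$. The case $\delta_1 = -1$ is handled symmetrically by setting $x' = x + h_1$ instead.

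The main obstacle is guaranteeing that every intermediate element remains in $H_a$. This is exactly what the assumption $i_\ell,j_\ell \in \Ma(a)$ buys us: at a maximal free coordinate, the $\N$-component of any element of $H_a$ is at least $1$, giving enough ``room'' to subtract a $G_j$-valued term (such as $\varphi_{j_\ell i_\ell}(\alpha_\ell)$) without leaving the allowed region. This is why positivity of the generator parameters, combined with maximality of $i,j$, suffices to simulate the full subgroup $U_a$ by semigroup steps inside $H_a$.
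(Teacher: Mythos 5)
Your proof is correct. It rests on the same two pillars as the paper's argument: writing each generator $\chi(a,i,u)-\chi(a,j,\widehat{\varphi}_{ji}(u))$ of $U_a$ as a difference of ``positive'' generators by splitting $u=\alpha-\beta$ with $\alpha,\beta\in M_i$, and the observation that subtracting a $G_j$-valued term at a coordinate $j\in\Ma(a)$ does not leave $H_a$ (for $j$ free the $\N$-part is untouched and already at least $1$). Where you diverge is in the organization of the second half. The paper adds all the correction terms to both $x$ and $y$ so as to obtain an honest equality in $H_a$, swaps them across coordinates by elementary steps, and is then left with a cancellation problem --- $x+\chi(a,j,u)\equiv y+\chi(a,j,u)\implies x\equiv y$ --- which it solves by translating an entire chain of elementary steps by $-\chi(a,j,u)$. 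You instead peel off one signed positive generator $h_\ell$ at a time, noting that $y$ and $y+h_\ell$ are literally the two members of a single generating pair anchored at $z=y-\chi(a,j_\ell,\varphi_{j_\ell i_\ell}(\alpha_\ell))\in H_a$; this avoids any manipulation of chains and gives a slightly cleaner induction on the number of generators used. One small slip in your closing discussion: the hypothesis ``$i<j\in\Ma(a)$'' places only $j$ in $\Ma(a)$ (indeed $i<j$ forces $i\notin\Ma(a)$), so your remark that the argument relies on ``$i_\ell,j_\ell\in\Ma(a)$'' overstates the hypothesis. Fortunately your proof never uses maximality of $i_\ell$ --- at that coordinate you only ever add $\alpha_\ell\in M_{i_\ell}$, which keeps you in the allowed region regardless --- so nothing breaks.
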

\begin{proof} It is clear that if
$x\equiv y$ then $x\sim y$.

Assume that $x\sim y$. Then there exist a finite subset $A$ consisting of pairs $(i,j)\in I^2$ such that $i<j$ for all $(i,j)\in A$ and $j\in \Ma
(a)$, and elements $\alpha ^{(j)}_i, \beta^{(j)}_i\in M_i$ such that
\begin{equation}
\label{eq:ident1} x+\sum\limits _{(i,j)\in A}  (\chi(a,j, \varphi
_{ji}(\alpha^{(j)}_i))+\chi(a,i, \beta^{(j)}_i))= y+\sum\limits _{(i,j)\in A}  (\chi(a,j, \varphi _{ji}(\beta^{(j)}_i))+\chi(a,i,
\alpha^{(j)}_i)).
\end{equation}
Observe that
$$x+\sum\limits _{(i,j)\in A}  (\chi(a,j, \varphi _{ji}(\beta^{(j)}_i))+\chi(a,i,
\alpha^{(j)}_i))\equiv y+\sum\limits _{(i,j)\in A}  (\chi(a,j, \varphi
_{ji}(\beta^{(j)}_i))+\chi(a,i, \alpha^{(j)}_i))\, ,$$ so that it is
enough to show that for all $z=\chi (a,i,\alpha)$ and all $z= \chi
(a,j, \varphi _{ji}(\alpha))$, $i<j\in \Ma (a)$, $\alpha\in M_i$,
and for all $x,y\in H_a$, we have $x+z\equiv y+z\implies x\equiv y$.

Assume that for $x,y\in H_a$, $i<j\in \Ma (a)$ and $\alpha \in M_i$ we have $x+\chi (a,i,\alpha)\equiv y+\chi (a,i,\alpha)$. We then have
$$x+\chi (a,j,\varphi_{ji}(\alpha))\equiv x+\chi(a,i,\alpha)\equiv
y+\chi(a,i,\alpha) \equiv y+\chi (a,j,\varphi _{ji}(\alpha)) ,$$ so
that it suffices to show that $x+\chi(a,j,u)\equiv y+\chi (a,j,u)$
implies $x\equiv y $ for $u\in G_j$, $j\in \Ma (a)$. But this is
easy: let $x^{(0)},x^{(1)},\dots ,x^{(n)}$ be elements in $H_a$ such
that $x^{(0)}= x+\chi(a,j,u)$  and $x^{(n)}= y+\chi (a,j, u)$, and
such that, for $m=0,\dots ,n-1$,  either $x^{(m)}=(x^{(m)})'+\chi
(a,k,\alpha_m)$ and $x^{(m+1)}= (x^{(m)})'+\chi(a,l,
\varphi_{lk}(\alpha_m))$, or $x^{(m)}=(x^{(m)})'+\chi
(a,l,\varphi_{lk}(\alpha_m))$ and $x^{(m+1)}= (x^{(m)})'+\chi(a,l,
\alpha_m)$,  for some $(x^{(m)})'\in H_a$, $\alpha _m\in M_k$, and
some pair $k<l\in \Ma (a)$. Then, setting 
$$z^{(m)}= x^{(m)}-\chi (a,j,u) = x^{(m)} +\chi (a,j, -u)\in H_a,\qquad (m=0,1, \dots ,n)$$
we get that $z^{(0)}= x$, $z^{(n)}= y$ and $z^{(m)}$ are elements in $H_a$ satisfying relations analogous 
to the ones satisfied by $x^{(m)}$
(with $(z^{(m)})':= (x^{(m)})'-\chi (a,j,u)$ in place of $(x^{(m)})'$). 
This shows that $x\equiv y$.
\end{proof}

\begin{corol}\label{cor:NouSubmonoid}
For every $a\in A(I)$, $M_a:=H_a/\sim = H_a/\equiv $ is a submonoid of $\widetilde{G}_a$.
\end{corol}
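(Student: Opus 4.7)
The plan is to obtain the corollary as a formal consequence of Lemmas \ref{lem:well-defsum} and \ref{lem:equivrels} together with the construction of $U_a$ as a subgroup of $\widehat{H}_a$. No genuine new content is needed beyond a careful unwinding of the definitions.

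First, I would set up the semigroup structure on $H_a/\sim$. Applying Lemma \ref{lem:well-defsum} with $b=a$ shows that $H_a$ is closed under the addition inherited from $\widehat{H}_a$, so $H_a$ is a subsemigroup of $\widehat{H}_a$. Since $U_a$ is a subgroup of the abelian group $\widehat{H}_a$, the relation $x\sim y \Leftrightarrow x-y\in U_a$ is automatically a congruence on $\widehat{H}_a$, and hence its restriction to $H_a$ is a congruence on the semigroup $H_a$. Thus $H_a/\sim$ inherits a well-defined commutative semigroup structure, and Lemma \ref{lem:equivrels} identifies it with $H_a/\equiv$.

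Next, I would embed $M_a$ into $\widetilde{G}_a$. The canonical quotient map $\Phi_a\colon \widehat{H}_a\to \widetilde{G}_a=\widehat{H}_a/U_a$ sends $x$ to $x+U_a$, so for $x,y\in H_a$ one has $\Phi_a(x)=\Phi_a(y)$ if and only if $x-y\in U_a$, i.e.\ if and only if $x\sim y$. The restriction $\Phi_a\res H_a$ is a semigroup homomorphism whose fibres are precisely the $\sim$-classes, so by the universal property of the quotient it factors through an injective semigroup homomorphism $M_a\hookrightarrow \widetilde{G}_a$. This realises $M_a$ as a subsemigroup of $\widetilde{G}_a$, which is the ``submonoid'' claim of the statement.

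There is no serious obstacle: the whole corollary is a repackaging of the preceding lemma together with the definition of $\sim$ via $U_a$. The only mildly delicate point is that $0$ need not lie in $H_a$ when $\Ma(a)_{\mathrm{free}}\ne \es$; this causes no trouble because $\sim$ is inherited from the group congruence on the ambient abelian group $\widehat{H}_a$, rather than being built intrinsically on $H_a$, so neither the verification of the congruence property nor the injectivity of the induced map requires the presence of an identity in $H_a$.
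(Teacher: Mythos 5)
Your proposal is correct and follows essentially the same route as the paper, whose proof is just the terse version: the equality $H_a/\sim\,=H_a/\equiv$ is quoted from Lemma~\ref{lem:equivrels}, and the inclusion $M_a\subseteq\widehat{H}_a/\sim\,=\widetilde{G}_a$ is declared ``clear''. Your unwinding of that last step (the fibres of $\Phi_a\res H_a$ are exactly the $\sim$-classes, so the induced map is injective), together with the remark that nothing depends on $0$ lying in $H_a$, is exactly the intended content.
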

\begin{proof}
By Lemma \ref{lem:equivrels}, we have $H_a/\sim = H_a/\equiv $. 
Clearly, $M_a= H_a/\sim$ is a submonoid of $\widehat{H}_a/\sim$.
\end{proof}

\begin{defi}
\label{def:MJ} {\rm Given an $I$-system $\mathcal J=(I,\le, G_i,\varphi _{ji} (i<j))$, we denote by
$M(\mathcal J)$ the set $\bigsqcup _{a\in A(I)} M_a$. By Lemma \ref{lem:well-defsum} and 
Corollary \ref{cor:NouSubmonoid}, $M(\mathcal J)$  is a submonoid of $\widetilde{M}(\mathcal J)$.}
\end{defi}

\begin{rema}\label{rem:NouMonoid}
{\rm $\mbox{ }$
If $I=I_{reg}$, then $M(\mathcal{J})$ is the monoid constructed by Dobbertin \cite{Dobb84}. If all groups $G_i$ are trivial, then we recover
Pierce's primitive monoids \cite{Pierce}. }
\end{rema}

Observe that Lemma \ref{lem:equivrels} gives:

\begin{corol}
\label{cor:presentationofM} $M(\mathcal{J})$ is the
monoid generated by $M_i$, $i\in I$, with respect to the defining
relations
$$x+y= x+\varphi _{ji}(y), \quad i<j, \, x\in M_j,\,  y\in M_i.$$
\end{corol}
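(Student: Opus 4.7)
The plan is to show that Lemma \ref{lem:equivrels} directly yields the presentation by reading off generators and relations in $M(\mathcal{J})$ from the congruence generators on $H_a$.

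First I construct the semigroup homomorphisms $\iota_i\colon M_i\to M(\mathcal{J})$ by $\iota_i(x) := [\chi(I\downarrow i, i, x)]\in M_{I\downarrow i}$ (note that for $i$ regular the identity of $M_i=G_i$ maps to the idempotent $0_{M_{I\downarrow i}}$, not the monoid identity of $M(\mathcal{J})$), and I verify that the defining relations $\iota_j(x)+\iota_i(y) = \iota_j(x+\varphi_{ji}(y))$ (for $i<j$, $x\in M_j$, $y\in M_i$) hold in $M(\mathcal{J})$. Indeed, working in $M_{I\downarrow j}$, we have $I\downarrow i\subseteq I\downarrow j$ and $j\in\Ma(I\downarrow j)$, so the generator $\chi(I\downarrow j,i,y)\equiv\chi(I\downarrow j,j,\varphi_{ji}(y))$ from Lemma \ref{lem:equivrels} (applied with ambient term $\chi(I\downarrow j,j,x)$) yields the required identity. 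This gives a monoid homomorphism $\pi\colon M''\to M(\mathcal{J})$ from the presented monoid $M''$.

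For the reverse direction, I construct $\rho\colon M(\mathcal{J})\to M''$ by sending $[x]\in M_a$ with unique decomposition $x = \sum_{i\in a}\chi(a,i,z_i)\in H_a$ to
$$\rho([x]) := \sum_{k\in\Ma(a)}\iota_k(z_k) + \sum_{k\in a\setminus\Ma(a),\, z_k\ne 0}\iota_k(z_k)\in M''.$$
Including the idempotents $\iota_k(0)$ for $k\in\Ma(a)_{\mathrm{reg}}$ with $z_k=0$ keeps track of which piece $M_a$ of the disjoint-union monoid $M(\mathcal{J})$ the element lies in. Well-definedness modulo $\equiv$ follows from Lemma \ref{lem:equivrels}: on each generator pair $(w+\chi(a,i,\alpha),\, w+\chi(a,j,\varphi_{ji}(\alpha)))$, the $M''$-relation applied to $(w_j,\alpha)$ — with $w_j\in M_j$ the $j$-component of $w$, which lies in $M_j$ because $j\in\Ma(a)$ forces $w_j\in\N\times G_j$ when $j$ is free and $w_j\in G_j=M_j$ when $j$ is regular — identifies the two $M''$-images after a brief case analysis on the vanishing of $w_i$ and $w_i+\alpha$. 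Additivity of $\rho$ across distinct pieces uses the absorption $\iota_j(z)+\iota_k(0)=\iota_j(z)$ for $k<j$ (from the $M''$-relation with $y=0\in M_k$), which collapses idempotent terms at indices that are maximal in $a$ or $b$ but not in $a\cup b$.

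The only subtlety is this bookkeeping of the idempotents $\iota_k(0)$; once handled, $\pi$ and $\rho$ are mutually inverse by inspection on generators, and the corollary amounts to a direct restatement of Lemma \ref{lem:equivrels}.
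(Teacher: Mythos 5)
Your proof is correct and follows exactly the route the paper takes: the corollary is deduced from Lemma \ref{lem:equivrels} (that $M_a=H_a/\equiv$ for all $a\in A(I)$), which the paper states as a one-line proof and you carefully unpack into the two mutually inverse homomorphisms $\pi$ and $\rho$. Your attention to the idempotents $\iota_k(0_k)$ at regular maximal indices and to the placeholder zeros at non-maximal free indices is precisely the bookkeeping the paper leaves implicit.
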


\begin{proof}
 This follows from the fact that $M_a= H_a/\equiv $ for all $a\in A(I)$ (Lemma \ref{lem:equivrels}).
 \end{proof}

\begin{nota}{\rm
\label{nota:chisubi} Assume $\mathcal J$ is an $I$-system. For $i\in I$ and $x\in M_i$ we will denote by
$\chi _i(x)$ the element $[\chi (I\dnw i, i, x)]\in M(\mathcal J)$. Note that, by Corollary \ref{cor:presentationofM},
$M(\mathcal J )$ is the monoid generated by $\chi _i (x)$, $i\in I$, $x\in M_i$, with the defining relations
$$\chi _j (x)+\chi _i (y)= \chi _j (x+\varphi _{ji}(y)), \quad i<j, \, x\in M_j,\,  y\in M_i.$$}
\end{nota}

We will denote by $\mathcal L (M)$ the lattice of order-ideals of a monoid $M$ and by $\mathcal L (I)$ the lattice of lower subsets
of a poset $I$.

\begin{prop}
 \label{prop:characideals}
 Let $\mathcal J$ be an $I$-system. Then there is a lattice isomorphism $$\mathcal L (I) \cong \mathcal L (M(\mathcal J)).$$
 More precisely, given a lower subset $J$ of $I$, the restricted $J$-system is $$\mathcal J _J :=(J, \le , (G_i)_{i\in J}, \varphi _{ji}, (i<j\in J)), $$
 and the 
 map $J\mapsto M(\mathcal J _J)$ defines a lattice isomorphism from $\mathcal L (I)$ onto $\mathcal L (M(\mathcal J ))$. 
 \end{prop}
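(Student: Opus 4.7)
The plan is to exhibit an explicit order-preserving inverse $\Phi$ to $\Psi(J):=M(\mathcal{J}_J)$ and to verify that both compositions are the identity. I set
$$
\Phi(S):=\{\,i\in I \mid \chi_i(x)\in S\text{ for some }x\in M_i\,\}.
$$
First I would check that $\mathcal{J}_J$ really is a $J$-system when $J$ is a lower subset of $I$: the only nontrivial axiom is the surjectivity in Definition \ref{def:I-system}(c2), and it carries over because $J$ being a lower subset means $\{k\in I:k<i\}\subseteq J$ for every $i\in J$. Since $A(J)=\{a\in A(I):a\subseteq J\}$ and each constituent $\widehat{H}_a$, $U_a$, $\widetilde{G}_a$, $H_a$, $M_a$ depends only on the data indexed by $a$, one identifies $M(\mathcal{J}_J)$ with the submonoid $\bigsqcup_{a\subseteq J}M_a$ of $M(\mathcal{J})=\bigsqcup_{a\in A(I)}M_a$.

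The key structural input is that in this disjoint-union description, each $m\in M(\mathcal{J})$ has a unique support $a(m)\in A(I)$ with $m\in M_{a(m)}$, and $a(m+n)=a(m)\cup a(n)$ by Lemma \ref{lem:well-defsum} and the formula defining the operation on $\widetilde{M}(\mathcal{J})$. This makes $\Psi(J)$ an order-ideal at once: if $m'+w=m\in\Psi(J)$ then $a(m')\subseteq a(m)\subseteq J$, so $m'\in\Psi(J)$. For the lower-set property of $\Phi(S)$, suppose $\chi_i(x)\in S$ and $k<i$. Whether $i\in I_{\rm reg}$ (so $M_i=G_i$) or $i\in I_{\rm free}$ (so $M_i=\N\times G_i$ with $\varphi_{ik}$ landing in $G_i$), the element $x-\varphi_{ik}(y)$ again lies in $M_i$ for every $y\in M_k$, and the defining relation from Corollary \ref{cor:presentationofM} yields
$$
\chi_k(y)+\chi_i\bigl(x-\varphi_{ik}(y)\bigr)=\chi_i(x),
$$
so $\chi_k(y)\leq\chi_i(x)\in S$ and thus $k\in\Phi(S)$.

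It remains to verify $\Phi\circ\Psi=\mathrm{id}$ and $\Psi\circ\Phi=\mathrm{id}$. The first is immediate from $a(\chi_i(x))=I\dnw i$. For $\Psi(\Phi(S))\subseteq S$ I would first upgrade ``some $x$'' to ``every $x$'' in the definition of $\Phi$: for $i\in I_{\rm reg}$ the identity $\chi_i(x)+\chi_i(x_0-x)=\chi_i(x_0)$ with $\chi_i(x_0)\in S$ does it, while for $i\in I_{\rm free}$ one chooses $N$ with $Nn_0>n$ so that $\chi_i(n,g)\leq N\chi_i(n_0,g_0)\in S$; Corollary \ref{cor:presentationofM} then finishes. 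The reverse inclusion $S\subseteq\Psi(\Phi(S))$ is the crux: pick $m\in S$ with support $a$, fix a representative $(z_k)_{k\in a}\in H_a$, and observe that the decomposition $m=\chi_i(z_i)+\sum_{k\in a,\,k\neq i}\chi_k(z_k)$ gives $\chi_i(z_i)\leq m$ for each $i\in\Ma(a)$, so $\Ma(a)\subseteq\Phi(S)$ and, since $\Phi(S)$ is a lower set, $a\subseteq\Phi(S)$, i.e., $m\in M(\mathcal{J}_{\Phi(S)})$. Both $\Phi$ and $\Psi$ are visibly monotone, so mutual inverseness promotes this to a lattice isomorphism. The chief subtlety I anticipate is the support formalism: making Definition \ref{def:MJ}'s disjoint-union structure rigorous enough that $a(m+n)=a(m)\cup a(n)$ can be invoked freely is the technical backbone on which both the order-ideal closure and the crux inclusion rely.
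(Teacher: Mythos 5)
Your proposal is correct and follows essentially the same route as the paper: identify $M(\mathcal J_J)$ with $\bigsqcup_{a\subseteq J}M_a$, check it is an order-ideal via the support semilattice, and recover $J$ from an order-ideal $S$ as the set of $i$ with some $\chi_i(x)\in S$, using the relation $\chi_i(x)+\chi_k(y)=\chi_i(x+\varphi_{ik}(y))$ for the lower-set property and the decomposition $m=\sum_{i\in\Ma(a)}\chi_i(x_i)$ for the crux inclusion. Your explicit $\le$-comparisons ($\chi_i(x)\le\chi_i(x_0)$, resp. $\chi_i(n,g)\le N\chi_i(n_0,g_0)$) are just an unpacking of the paper's appeal to the archimedian component of $\chi_i(z)$ being $\chi_i(M_i)$, so the two arguments coincide in substance.
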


 \begin{proof}
  Since $J$ is a lower subset, we see that $\mathcal J _J$ is a $J$-system (of course, we set $J_{{\rm free}} = J\cap I_{{\rm free}}$ and
  $J_{{\rm reg}} = J\cap I_{{\rm reg}}$).  Given $a\in A(J)$, we also have that $a\in A(I)$, and moreover $M_a$ only depends on the system restricted to $a$, 
  therefore we get an embedding 
  $$M(\mathcal J _J) = \bigsqcup_{a\in A(J)} M_a \hookrightarrow  M(\mathcal J) =  \bigsqcup_{a\in A(I)} M_a.$$
  Clearly $M(\mathcal J _J)$ is an order-ideal of $M(\mathcal J )$. 
  
  The map $J\mapsto M(\mathcal J _J)$ is clearly injective. To show surjectivity, let $N$ be an order-ideal of $M(\mathcal J )$, and let $J$ be the subset of elements $i$ of $I$
  such that $\chi _i (x)\in N$ for some $x\in M_i$. Observe that, for $z\in M_i$, the archimedian component of $\chi_i(z)$ in $M(\mathcal J)$ is precisely $\chi_i (M_i)$.
  If $i\in J$, then there is $x\in M_i$ such that $\chi_i(x)\in N$. Hence, since $N$ is an order-ideal, it must contain the archimedian component of $\chi_i(x)$, that is,
  $\chi_i(M_i)\subseteq N$. To show that $J$ is a lower subset of $I$, take $i,j\in I$ with $j<i$ and $i\in J$. Then, for $y\in M_j$ and $x\in M_i$, 
  we have $\chi_i(x)+\chi_j(y) = \chi_i (x+\varphi_{ij}(y))\in \chi_i(M_i) \subseteq N$,
  and since $N$ is an order-ideal we get that $\chi_j(y)\in N$, showing that $j\in J$.  
    We claim that $M(\mathcal J_J) = N$. If $x\in N$ then there is $a\in A(I)$ such that  $x= \sum\limits_{i\in \Ma (a)} \chi_i(x_i)$ for some 
  $x_i \in M_i$. Since $N$ is an order-ideal, we get $\chi _i(x_i)\in N$, and so $i\in J$. This shows that $N\subseteq M(\mathcal J_J)$.
  Conversely, $M(\mathcal J_J)$ is generated as a monoid by the elements $\chi _i (x)$ for $i\in J$ and $x\in M_i$, so it suffices to show that all these elements belong to $N$. 
  If $i\in J$ then there is an element $z\in M_i$ such that $\chi _i (z)\in N$. Since the archimedian component of $\chi_i (z)$ in $M(\mathcal J)$ is  
  $\chi _i (M_i)$, we get that $\chi _i (M_i)\subseteq N$, as desired.
  
We have shown that the map $J\mapsto M(\mathcal J _J)$ is a bijection. It is easily checked that this map is a lattice isomorphism.
  \end{proof}

 Observe that the map defined in Proposition \ref{prop:characideals} restricts to a semilattice isomorphism from $A(I)$ to the semilattice of finitely generated 
 order-ideals of $M(\mathcal J)$.
 
\vspace{.2truecm}

\section{The $I$-system of a primely generated refinement monoid}
\label{section:theIsystemof-a-monoid}

In this section we will show that for any primely generated conical refinement monoid $M$ there exist a poset 
$I$ and an $I$-system $\mathcal{J}_M$ such that $M$ and $M(\mathcal{J}_M)$ are isomorphic.

The set of primes of an abelian monoid $M$ is denoted by $\mathbb{P}(M)$. 
Two primes $p,q\in M$ are {\it incomparable} if $p\nleq q$ and $q\nleq p$. 
Let $\ol{M}$ be the antisymmetrization of $M$, i.e. the quotient monoid 
of $M$ by the congruence given by $x\equiv y$ if and only if $x\leq y$ and $y\leq x$ (see \cite[Notation 5.1]{Brook}). We will denote the class of an 
element $x$ of $M$ in $\ol{M}$ by $\ol{x}$. 

The following facts are easily proven.

\begin{lem}\label{lem:primeslift}
If $p\in M$, then:
\begin{enumerate}
\item $\ol{p}\in \ol{M}$ is regular if and only if so is $p$.
\item $\ol{p}\in \ol{M}$ is free if and only if so is $p$.
\item $\ol{p}\in \ol{M}$ is prime if and only if so is $p$.
\end{enumerate}
\end{lem}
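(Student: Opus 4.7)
The plan is to reduce everything to a single observation about how the algebraic preorder interacts with antisymmetrization, and then deduce the three equivalences routinely.

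First I would establish the key compatibility: for $x,y\in M$ one has $\bar{x}\le \bar{y}$ in $\bar{M}$ if and only if $x\le y$ in $M$. The forward direction writing $\bar{y}=\bar{x}+\bar{w}$ means $y\equiv x+w$, which in particular gives $x+w\le y$, whence $x\le y$; the reverse direction is immediate because $y=x+z$ in $M$ descends to $\bar{y}=\bar{x}+\bar{z}$. I would also note the easy consequence that $p$ is invertible in $M$ iff $\bar{p}=0$ in $\bar{M}$ iff $\bar{p}$ is invertible in $\bar{M}$ (using that in an abelian monoid a summand of an invertible element is invertible).

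With this in hand, all three items are a matter of translating the defining inequalities. For \textup{(1)}, $p$ regular means $2p\le p$, which by the compatibility is the same as $2\bar{p}\le\bar{p}$, i.e.\ $\bar{p}$ regular. For \textup{(2)}, $p$ free means $np\le mp\Rightarrow n\le m$; using $\bar{np}=n\bar{p}$ and the compatibility, this is precisely freeness of $\bar{p}$. For \textup{(3)}, non-invertibility transfers by the observation above, and the Riesz-type condition transfers as follows: if $p$ is prime and $\bar{p}\le \bar{a}+\bar{b}=\overline{a+b}$, lift to $p\le a+b$ in $M$, conclude $p\le a$ or $p\le b$, and project back; conversely, if $\bar{p}$ is prime and $p\le a+b$, then $\bar{p}\le\bar{a}+\bar{b}$ gives $\bar{p}\le\bar{a}$ or $\bar{p}\le\bar{b}$, and the compatibility yields $p\le a$ or $p\le b$.

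There is no real obstacle; the only point that needs a moment of care is verifying that the congruence $\equiv$ defining $\bar{M}$ does not collapse the preorder (the ``if and only if'' in the compatibility claim), and the handling of invertibility in part \textup{(3)}, where one must remember that ``non-invertible'' is part of the definition of prime and cannot be dropped after passing to $\bar{M}$.
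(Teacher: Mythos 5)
Your proposal is correct and complete; the paper itself gives no proof of this lemma (it is prefaced only by ``The following facts are easily proven''), and your argument -- establishing that $\ol{x}\le\ol{y}$ in $\ol{M}$ iff $x\le y$ in $M$, then translating regularity, freeness, invertibility, and the prime condition through this compatibility -- is exactly the intended routine verification. The one point you rightly flag, that the antisymmetrization does not collapse the preorder, is the whole content of the lemma, and your forward-direction argument ($\ol{y}=\ol{x}+\ol{w}$ gives $x+w\le y$, hence $x\le y$) handles it correctly.
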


For $x,y\in M$, we will write $x<^* y$ when $\ol{x}< \ol{y}$ in $\ol{M}$. We write $x\leq^* y$ if either $x<^*y$ or $x = y$. 

\begin{lem}\label{lem:Tech1}
Let $p, q\in M$ be primes, and suppose that $q< ^* p$. Then, $\ol{p}+\ol{q}=\ol{p}$.
\end{lem}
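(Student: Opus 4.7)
The plan is to unwind the definition of the antisymmetrization and reduce the statement $\overline{p}+\overline{q}=\overline{p}$ to the two algebraic inequalities $p+q\le p$ and $p\le p+q$ in $M$. The second inequality is immediate from the definition of $\le$, so the real content is to produce an element $s\in M$ with $p+q+s=p$.

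Starting from $q<^* p$, I would pick a witness $r\in M$ with $p=q+r$, and then invoke primality of $p$ on the decomposition $p\le q+r$. Primality forces either $p\le q$ or $p\le r$. The first alternative would give $p\le q$ together with the already-known $q\le p$, i.e.\ $p\equiv q$, which contradicts the strict inequality $q<^* p$; therefore $p\le r$, so $r=p+s$ for some $s\in M$.

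Substituting back, $p=q+r=q+p+s=p+q+s$, which shows $p+q\le p$. Combined with the trivial $p\le p+q$, this yields $p\equiv p+q$ in $M$, hence $\overline{p}=\overline{p+q}=\overline{p}+\overline{q}$ in $\overline{M}$, as required.

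There is no real obstacle here: the only subtle point is making sure the alternative $p\le q$ is correctly ruled out using the strictness of $<^*$, which is just the antisymmetrization definition together with Lemma~\ref{lem:primeslift}(3) to know that primality and the $<^*$-relation are well-behaved. Everything else is a direct application of the definition of a prime element.
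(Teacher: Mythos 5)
Your proof is correct and follows essentially the same route as the paper's: decompose $p$ as $q$ plus a remainder, use primality to rule out the alternative $p\le q$ (which would contradict the strictness of $q<^*p$), and conclude $p+q\le p$. The only cosmetic difference is that you argue in $M$ and pass to $\ol{M}$ at the end, whereas the paper writes $\ol{p}=\ol{q}+\ol{a}$ and runs the identical argument directly in the antisymmetrization.
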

\begin{proof}
We have $\ol{p}=\ol{q}+\ol{a}$ for a nonzero $\ol{a}\in \ol{M}$, and thus either $\ol{p}\leq \ol{q}$ or $\ol{p}\leq \ol{a}$. The first case implies that 
$\ol{p}=\ol{q}$, contradicting the assumption. Hence, $\ol{p}+\ol{q}\leq \ol{a} +\ol{q}=\ol{p}$, and thus $\ol{p}+\ol{q}=\ol{p}$, as desired.
\end{proof}

For $a\in M$, we denote by $M_a$ the archimedian component of $a$, so that $x\in M_a$ if and only if $a\le nx$ and $x\le ma$ for some positive integers $n,m$. 

\begin{lem}\label{lem:Tech2}
Let $M$ be a primely generated refinement monoid, and let $p\in \mathbb{P}(M)_{{\rm free}}$. If $x\in M_p$, then there exists a unique $n\in \N$ such that $\ol{x}=n\ol{p}$.
\end{lem}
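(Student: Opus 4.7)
The plan is to decompose $x$ into primes, compare each prime summand with $p$ modulo the antisymmetrization, and then read off the coefficient from the count of summands equivalent to $p$.

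First I would note that $x \neq 0$: since $x \in M_p$ we have $p \le nx$ for some $n \in \N$, and if $x=0$ then $p$ would be invertible, contradicting that $p$ is prime. As $M$ is primely generated and $x$ is non-invertible, we may write $x = q_1 + q_2 + \cdots + q_k$ with each $q_j$ a prime. From $q_j \le x \le mp = p + p + \cdots + p$ and iterated use of the primality of $q_j$, I obtain $q_j \le p$, hence $\overline{q_j} \le \overline{p}$ in $\overline{M}$. Dually, starting from $p \le nx = nq_1 + \cdots + nq_k$ and iterating the primality of $p$, I find some index $i$ with $p \le q_i$; combined with $q_i \le p$ this gives $\overline{q_i} = \overline{p}$, so at least one summand realises equality.

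Partitioning the summands according to whether $\overline{q_j} = \overline{p}$ or $\overline{q_j} < \overline{p}$, and letting $n \ge 1$ denote the number of summands of the first kind, we get in $\overline{M}$
$$\overline{x} \;=\; n\,\overline{p} \;+\; \sum_{\overline{q_j} < \overline{p}} \overline{q_j}.$$
Applying Lemma \ref{lem:Tech1} to each residual term (each such $\overline{q_j}$ is a prime strictly below the prime $\overline{p}$ by Lemma \ref{lem:primeslift}(3), so $\overline{p}+\overline{q_j} = \overline{p}$), the residual sum is absorbed into $n\overline{p}$ one term at a time, leaving $\overline{x} = n\overline{p}$ with $n \in \N$.

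For uniqueness, suppose $n\overline{p} = n'\overline{p}$ with $n,n' \in \N$. By Lemma \ref{lem:primeslift}(2), $\overline{p}$ is free in $\overline{M}$, so $n\overline{p} \le n'\overline{p}$ forces $n \le n'$, and applying the same in reverse gives $n = n'$. The only real subtlety in the whole argument is the twofold use of iterated primality: one must check both that every prime summand of $x$ lies below $p$ in the antisymmetrization (so that Lemma \ref{lem:Tech1} applies and the residual sum collapses) and that at least one summand is equivalent to $p$ (so that $n \ge 1$, matching the convention $\N = \{1,2,\ldots\}$ of the paper).
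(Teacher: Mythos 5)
Your proof is correct, but it takes a genuinely different route from the paper's. The paper simply invokes Brookfield's canonical form theorem (\cite[Theorem 5.8]{Brook}): $\ol{x}$ has a unique expression $n_1\ol{q}_1+\cdots+n_s\ol{q}_s$ with the $\ol{q}_i$ pairwise incomparable primes, and the two inequalities $\ol{x}\le n\ol{p}$ and $\ol{p}\le m\ol{x}$ force $s=1$ and $\ol{q}_1=\ol{p}$; uniqueness of $n$ then comes for free from the uniqueness clause of that theorem. You instead work upstairs in $M$: you take an arbitrary prime decomposition $x=q_1+\cdots+q_k$ supplied by prime generation, push every summand below $p$ and at least one summand up to $p$ by iterated primality, and then absorb the strictly smaller summands using Lemma \ref{lem:Tech1}, finishing uniqueness via freeness of $\ol{p}$ (Lemma \ref{lem:primeslift}(2)). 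Your argument is more self-contained --- it replaces the black-box appeal to \cite[Theorem 5.8]{Brook} by the definition of primality plus Lemma \ref{lem:Tech1}, which is in the spirit of how the surrounding lemmas are proved --- at the cost of being longer; the paper's version buys brevity and reuses a theorem it needs elsewhere anyway. One small point to tighten: you justify only $x\ne 0$ before asserting that $x$ is non-invertible, which is not automatic unless $M$ is conical; but the same one-line argument works, since $p\le nx$ with $x$ invertible would make $p$ invertible, contradicting primality. Also, the citation of Lemma \ref{lem:primeslift}(3) when applying Lemma \ref{lem:Tech1} is unnecessary: that lemma only requires $q_j$ and $p$ to be primes of $M$ with $q_j<^*p$, which you already have.
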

\begin{proof} By \cite[Theorem 5.8]{Brook}, there are unique pairwise incomparable primes $\ol{q}_1,\dots , \ol{q}_s$ and uniquely determined positive integers $n_1,\dots , n_s$, with $n_i=1$ if $\ol{q}_i$ is regular,
such that 
$$\ol{x} = n_1\ol{q}_1+\cdots + n_s\ol{q}_s .$$
Since $\ol{x}\le n\ol{p}$ for some $n\in \N$, we get that $\ol{q}_i\le \ol{p}$ for all $i=1,\dots , s$, and since $\ol{p}\le m \ol{x}$ for some $m \in \N$ we must have $\ol{p}\le \ol{q_i}$ for some $i$. Therefore 
$\ol{p}= \ol{q}_i$ and, since the $\ol{q}_j$ are incomparable we must have $s=i=1$. This gives the result. 
\end{proof}

We are now ready to define the $I$-system associated to a primely generated conical refinement monoid $M$:\vspace{.3truecm}

\noindent (1) By \cite[Theorem 5.2]{Brook}, $\ol{M}$ is a  primitive monoid. We will choose, for each prime $\ol{p}$ of $\ol{M}$, a representative $p$ of $\ol{p}$ in $M$, and we will consider the set $\mathbb P$ formed 
by the set of all the elements $p$ obtained in this way; notice that, by Lemma \ref{lem:primeslift}, $\mathbb{P}\subseteq\mathbb{P}(M)$. We will refer to these 
elements as the primes of $M$, although any element $p'$ such that $\ol{p}=\ol{ p'}$ will be also prime of course. Note that, again by Lemma \ref{lem:primeslift}, $p$ is 
regular or free according to whether $\ol{p}$ is regular or free in $\ol{M}$. 

The chosen poset is $\mathbb{P}$ endowed with the partial order $\leq^*$. Note that $(\mathbb P, \le^*)$ is order-isomorphic with
$(\ol{\mathbb P},\le )= (\mathbb P (\ol{M}), \le )$.

\vspace{.2truecm}

\noindent (2) For each $p\in \mathbb P$, let $M_p$ be the archimedian component
of $p$. We separate two cases:
\begin{enumerate}
\item[(i)] If $p$ is regular then $M_p$ is an abelian group (see e.g. \cite[Lemma 2.7]{Brook}), denoted by
$G_p$. In this case, we choose as the
canonical representative of $\ol{p}$ the only idempotent element lifting $\ol{p}$, i.e. the unit of the group $M_p$.
\item[(ii)] If $p$ is free, we define
$$G'_p= \{ p+\alpha : \alpha \in M \text{ and } p+\alpha \le p \}.$$
Then $G'_p$ is a group with respect to the operation $\circ $ given
by:
$$(p+\alpha) \circ (p+\beta ) = p+(\alpha +\beta )$$
(see \cite[Definition 2.8 \& Lemma  2.9]{Brook}). 
\end{enumerate}

In order to complete the picture for the case (ii), we need to prove the following result.

\begin{lem}
\label{lem:archcomp} Let $p\in \mathbb P$ be a free prime. Then
$$M_p\cong \N\times (G'_p,\circ ).$$
\end{lem}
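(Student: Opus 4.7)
The plan is to construct an explicit isomorphism of semigroups
\[
\Phi\colon \N\times (G'_p,\circ) \longrightarrow M_p, \qquad (n,g)\longmapsto (n-1)p+g.
\]
Writing $g=p+\alpha$ with $p+\alpha\le p$, this is $\Phi(n,g)=np+\alpha$, which lies in $M_p$ because $p\le np+\alpha\le (n+1)p$ (using $\alpha\le p+\alpha\le p$). That $\Phi$ is a semigroup homomorphism is a direct check: for $g=p+\alpha$ and $h=p+\beta$,
\[
\Phi(n,g)+\Phi(m,h)=(n+m)p+\alpha+\beta = \Phi(n+m,\,g\circ h),
\]
since $g\circ h = p+\alpha+\beta$ by definition of the operation in $G'_p$.

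For injectivity, suppose $\Phi(n,g)=\Phi(m,h)$, i.e.\ $np+\alpha=mp+\beta$. Taking classes in $\ol{M}$ and using that $\ol{g}=\ol{h}=\ol{p}$ yields $n\ol{p}=m\ol{p}$, so $n=m$ by Lemma~\ref{lem:Tech2}. Then $(n-1)p+g=(n-1)p+h$, and since $p\le g$ and $p\le h$ we have $(n-1)p\le (n-1)g$ and $(n-1)p\le (n-1)h$. Brookfield's \cite[Theorem 4.5]{Brook} ensures that $M$ is separative, which in the equivalent form ``$a+c=b+c$ together with $c\le na$ and $c\le nb$ implies $a=b$'' (see \cite[Lemma 2.1]{AGOP}) gives $g=h$.

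For surjectivity, let $x\in M_p$. By Lemma~\ref{lem:Tech2} there is a unique $n\ge 1$ with $\ol{x}=n\ol{p}$; this means $x\le np$ and $np\le x$, so one can write $x=np+\alpha$ and $np=np+\alpha+\delta$ for some $\alpha,\delta\in M$. The second equation becomes $(n-1)p+(p+\alpha+\delta)=(n-1)p+p$, and the same form of separative cancellation (both $p+\alpha+\delta$ and $p$ dominate $p$, so each bounds $(n-1)p$ after multiplication) gives $p+\alpha+\delta=p$; in particular $p+\alpha\le p$, so $g:=p+\alpha\in G'_p$ and $\Phi(n,g)=x$.

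The main obstacle is the two cancellation steps, each of which requires removing a factor $(n-1)p$ attached to the free prime $p$. This is exactly where separativity of primely generated refinement monoids enters; ordinary cancellativity would not suffice, since free primes are not in general cancellable, and it is the refined version of separative cancellation supplied by Brookfield's theorem that makes the argument work uniformly for all $n\ge 1$ (the case $n=1$ being trivial since then $\Phi(1,g)=g$).
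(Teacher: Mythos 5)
Your proof is correct and follows essentially the same route as the paper: the same map $(n,p+\alpha)\mapsto np+\alpha$, the same homomorphism check, reduction to $n=m$ via the antisymmetrization, and separative cancellation (Brookfield's \cite[Theorem 4.5]{Brook}) for both injectivity and surjectivity. Your version merely makes explicit the form of separativity being invoked and the verification that the image lands in $M_p$, which the paper leaves implicit.
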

\begin{proof}
If $p+\alpha\in G'_p$, then for any $n\in \N$ we have $np+\alpha\in M_p$. Thus, we can define a map $\varphi \colon \N\times G'_p\to M_p$ by the rule
$$\varphi (n, p+\alpha )= np+\alpha.$$
 
We have $$\varphi
((n,p+\alpha )+(m,p+\beta ))=\varphi (n+m,p+\alpha +\beta)=
(n+m)p+\alpha +\beta=\varphi (n,p+\alpha )+\varphi (m,p+\beta),$$
showing that $\varphi $ is a homomorphism.

Now we show that $\varphi $ is injective. Suppose that $np+\alpha
=mp+\beta $ for some $n\in \N $ and $p+\alpha,p+\beta \in G'_p$. Passing
to $\ol{M}$ and using that $p$ is free, we get $n=m$. Now by
separative cancellation (\cite[Theorem 4.5]{Brook}), we get $p+\alpha
=p+\beta$.

To show that $\varphi$ is surjective, take any $x\in M_p$. By Lemma \ref{lem:Tech2},
$\ol{x}=n\ol{p}$ for some positive integer $n$, so that $x+\alpha
=np$ and $np+\beta = x$ for some $\alpha, \beta \in M$. We thus
obtain $np= np+\alpha +\beta$, which again by separative
cancellation gives $p=p+\alpha +\beta$, showing that $p+\beta \in
G'_p$. We obtain $x= \varphi (n, p+\beta)$.
\end{proof}

\begin{rema}
 \label{rem:psi-iso}
{\rm When $p$ is a free prime, the group $(G'_p,\circ)$ can be identified with the subgroup $G_p:=\{(p+\alpha)-p : p+\alpha\in G'_p\}$ of $G(M_p)$ through the group isomorphism
$$
\begin{array}{crcl}
\phi : & G'_p  &\rightarrow   & G_p  \\
 & p+\alpha & \mapsto  &   (p+\alpha)-p
\end{array}
$$
On the other hand, Lemma \ref{lem:archcomp} gives an isomorphism 
$$
\begin{array}{crcl}
\varphi : & \N\times (G'_p,\circ )   &\rightarrow   & M_p \\
 & (n, p+\alpha) & \mapsto  &   np+\alpha \, ,
\end{array}
$$
so that we get an isomorphism
$$
\begin{array}{crcl}
\psi_p : & \N\times G_p   &\rightarrow   & M_p \\
 & (n, (p+\alpha) -p) & \mapsto  &   np+\alpha \, .
\end{array}
$$
Since $G(\N\times G_p) = \Z\times G_p$, we obtain an isomorphism
$$
\begin{array}{crcl}
G(\psi_p) : & \Z\times G_p   &\rightarrow   & G(M_p) \\
 & (n-m, (p+\alpha) -p) & \mapsto  &   (np+\alpha) -mp  \, ,
\end{array}
$$
for $n,m\in \N$ and $(p+\alpha )- p \in G_p$. 
We will freely use these identifications in the sequel.
} 
\end{rema}

The groups $(G_p)_{p\in \mathbb{P}}$ constructed above are the groups for our $\mathbb{P}$-system.\vspace{.2truecm}

\noindent (3) We now define maps $\varphi _{pq}\colon M_q\to G_p$ for $q<^*p$,
$p,q\in \mathbb P$. First we need the following fact

\begin{lem}\label{lem:elkefaltava}
If $q<^* p\in \mathbb{P}$ and $x\in M_q$, then $p+x\leq p$.
\end{lem}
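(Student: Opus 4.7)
The plan is to reduce the statement to the key consequence of Lemma \ref{lem:Tech1} and then amplify via the definition of the archimedean component.

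First I would observe that since $q <^* p$, Lemma \ref{lem:Tech1} gives $\overline{p} + \overline{q} = \overline{p}$ in $\overline{M}$. Unwinding the antisymmetrization, this means $p+q \leq p$ and $p \leq p+q$ in $M$; only the nontrivial direction matters, so I record the inequality
\[
p+q \leq p.
\]
Next, a straightforward induction on $m$ shows that $p + mq \leq p$ for every $m \in \N$: indeed $p + (m+1)q = (p+mq) + q \leq p + q \leq p$, using translation-invariance of the algebraic pre-order together with the base case.

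Finally, I would invoke the fact that $x \in M_q$: by the definition of archimedean component, there exists $m \in \N$ such that $x \leq mq$. Translation-invariance then yields
\[
p + x \leq p + mq \leq p,
\]
which is the desired conclusion.

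I do not anticipate any real obstacle here; the only subtle point is the correct interpretation of the equality $\overline{p}+\overline{q}=\overline{p}$ in $\overline{M}$ as the one-sided inequality $p+q \leq p$ in $M$, and then noticing that the archimedean hypothesis $x \leq mq$ for some $m$ is exactly what is needed to bootstrap from $q$ to arbitrary $x \in M_q$. No refinement, separativity, or primeness arguments appear to be required beyond what Lemma \ref{lem:Tech1} already packages.
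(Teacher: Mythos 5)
Your proof is correct. It rests on the same key ingredient as the paper's proof, namely Lemma \ref{lem:Tech1} giving $\ol{p}+\ol{q}=\ol{p}$, hence $p+q\le p$ in $M$ after unwinding the antisymmetrization congruence (which is indeed mutual domination, so this unwinding is legitimate). Where you diverge is in the amplification step: the paper splits into cases according to whether $q$ is regular or free, using $nq\le q$ in the regular case and the structural description $x=nq+a$ with $q+a\le q$ from Lemma \ref{lem:archcomp} in the free case. You instead use only the defining property of the archimedean component, $x\le mq$ for some $m\in\N$, together with translation invariance of the algebraic pre-order, which handles both cases uniformly and avoids invoking Lemma \ref{lem:archcomp} altogether. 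Both arguments are valid; yours is shorter and more elementary, while the paper's case analysis costs nothing extra since Lemma \ref{lem:archcomp} has already been established at that point and is needed elsewhere anyway.
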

\begin{proof}
We separate two cases:
\begin{enumerate}
\item If $q$ is regular, then for any $x\in M_q$ we have $q+x\leq nq\leq q$. By Lemma \ref{lem:Tech1} $p\leq p+q\leq p$, and thus $p+x\leq (p+q)+x=p+(q+x)\leq p+q\leq p$.
\item If $q$ is free, then by Lemma \ref{lem:archcomp} there exist $n\in \N$, $a\in M$ with $q+a\leq q$ and $x=nq+a$. Again by Lemma \ref{lem:Tech1}, $ p+q\leq p$, and thus $p+x=p+nq+a\leq p+nq\leq p$.
\end{enumerate}
\end{proof} 

For $x\in M_q$, set
$$\varphi _{pq}(x) = (p+x) -p\in G_p\subseteq G(M_p).$$
Observe that this map is well-defined by Lemma \ref{lem:elkefaltava}. Also, it is clearly a semigroup homomorphism.
It is straightforward to show that the induced maps
$\widehat{\varphi}_{pq}\colon \widehat{G}_q\to \widehat{G}_p$ 
satisfy condition (c1) of Definition \ref{def:I-system}. We finally look at condition (c2). Let $p$ be a
free prime in $\mathbb P$ and let $\alpha \in M$ be a nonzero
element such that $p+\alpha \le p$. By \cite[Theorem 5.8]{Brook},
there exist pairwise incomparable primes $q_1,\dots ,q_r\in \mathbb
P$ such that $q_i<^* p$ for all $i$, and uniquely determined positive
integers $n_i$, with $n_i=1$ if $q_i$ is regular,
 such that
$$\ol{\alpha} = n_1\ol{q}_1+\dots +n_r\ol{q}_r.$$
Since $\alpha \le n_1q_1+\cdots +n_rq_r$, we can apply refinement to
get $\alpha =\alpha_1+\cdots +\alpha_r$ with $\alpha_i \le n_iq_i$
for all $i$. Observe that we obtain $\ol{\alpha_i}= n_i \ol{q}_i$,
as otherwise we would arrive to a contradiction with the uniqueness of
the expression of $\ol{\alpha}$ as a sum of primes in $\ol{M}$. In
particular we obtain $\alpha _i\in M_{q_i}$, and
$$(p+\alpha) -p = \sum\limits _{i=1}^r \varphi _{pq_i} (\alpha_i)\in (\bigoplus _{q<p}\varphi _{pq})(\bigoplus _{q<p}M_q) , $$
as desired.\vspace{.3truecm}

We have thus built a $\mathbb{P}$-system $\mathcal{J}_M = (\mathbb P, \le^*, (G_p)_{p\in \mathbb P},
\varphi _{pq} (q<^*p))$ associated to the monoid $M$.

\begin{theor}
\label{thm:M=M(J)} With the above notation, we have that there is a
natural isomorphism of monoids
$$M(\mathcal{J}_M) \longrightarrow  M. $$
\end{theor}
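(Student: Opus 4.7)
My plan is to construct a homomorphism $\Phi \colon M(\mathcal{J}_M) \to M$ via the presentation in Corollary~\ref{cor:presentationofM}, defined on generators by $\chi_p(x) \mapsto x$ (viewing $x \in M_p$ as an element of $M$), and then to prove it is a bijection. For well-definedness, the defining relations $\chi_q(x) + \chi_p(y) = \chi_q(x + \varphi_{qp}(y))$ (with $p <^* q$, $x \in M_q$, $y \in M_p$) must hold after applying $\Phi$; since $\varphi_{qp}(y) = (q+y) - q$ in $G(M_q)$, using the identifications of Remark~\ref{rem:psi-iso} this reduces to a direct verification that $x + y = x + \varphi_{qp}(y)$ in $M$, in the two cases where $q$ is regular or free.

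For surjectivity, any nonzero $m \in M$ admits, by \cite[Theorem~5.8]{Brook}, a unique incomparable prime decomposition $\overline{m} = n_1 \overline{q_1} + \cdots + n_s \overline{q_s}$; lifting each $\overline{q_i}$ to $q_i \in \mathbb{P}$ and applying refinement in $M$ to $m \leq n_1 q_1 + \cdots + n_s q_s$, uniqueness of the decomposition in $\overline{M}$ forces $m = \alpha_1 + \cdots + \alpha_s$ with $\alpha_i \in M_{q_i}$, whence $m = \Phi(\sum_i \chi_{q_i}(\alpha_i))$.

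The crux is injectivity. Using the absorbing relation above, every element of $M(\mathcal{J}_M)$ can be brought into a normal form $\sum_{p \in S} \chi_p(y_p)$ with $S \subseteq \mathbb{P}$ pairwise $\leq^*$-incomparable and $y_p \in M_p$. Given normal forms $u = \sum_{p \in S} \chi_p(y_p)$ and $v = \sum_{p \in T} \chi_p(z_p)$ with $\Phi(u) = \Phi(v)$, Lemma~\ref{lem:Tech2} combined with Brookfield's uniqueness in $\overline{M}$ forces $S = T$ and, for each free $p \in S$, the $\N$-components of $y_p$ and $z_p$ to agree. Applying refinement in $M$ to $\sum_{p \in S} y_p = \sum_{p \in S} z_p$ produces a matrix $(a_{pq})_{p,q \in S}$; incomparability of the primes in $S$ places every off-diagonal $a_{pq}$ ($p \neq q$) inside the sum of archimedean components $M_{r_\ell}$ for primes $r_\ell$ strictly below both $p$ and $q$, while primely-generatedness of $M$ together with the absorption $n_p \overline{p} + \overline{e} = n_p \overline{p}$ (for $\overline{e}$ supported strictly below $\overline{p}$) lets me split each diagonal entry as $a_{pp} = c_p + \sum_k d_{pp,k}$ with $c_p \in M_p$ and $d_{pp,k} \in M_{s_k}$ ($s_k <^* p$). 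The identity $x + y = x + \varphi_{pe}(y)$ verified in the first paragraph, together with Corollary~\ref{cor:presentationofM}, then lets me rewrite $\chi_p(y_p) = \chi_p(c_p) + \sum_k \chi_{s_k}(d_{pp,k}) + \sum_{q \neq p,\,\ell} \chi_{r_\ell}(b_{pq,\ell})$, and similarly $\chi_p(z_p)$ with $b_{pq,\ell}$ replaced by $b_{qp,\ell}$. Summing over $p \in S$, the diagonal parts coincide, while the off-diagonal contributions match after the relabelling $(p,q) \leftrightarrow (q,p)$, yielding $u = v$.

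The main obstacle is this final step: the decomposition $a_{pp} = c_p + \sum_k d_{pp,k}$ requires primely-generatedness of $M$ together with the absorption behaviour of $\overline{M}$, and the final equality demands a symmetric bookkeeping of the off-diagonal entries. The conceptual point is that incomparability in $\mathbb{P}$ forces all ``mixing'' in the refinement matrix to live in strictly-lower archimedean components, whose $\chi$-images are absorbed by the $\varphi_{qp}$-relations, leaving only symmetric diagonal data that matches on the two sides.
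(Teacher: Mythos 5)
Your proposal is correct and takes essentially the same approach as the paper: define the map on generators via Corollary~\ref{cor:presentationofM}, get surjectivity from prime generation, and for injectivity match supports via Brookfield's unique prime decomposition in $\ol{M}$, refine $\sum_p y_p=\sum_p z_p$ in $M$, and decompose each refinement entry into primes lying in archimedean components below both indices. The only difference is cosmetic --- the paper verifies the resulting congruence by computing in $\widehat{H}_a$ that $x-y\in U_a$, whereas you rewrite using the presentation $\equiv$ (equivalent by Lemma~\ref{lem:equivrels}); note also that your tacit claim that each diagonal entry $a_{pp}$ contributes a nonzero component $c_p\in M_p$ does hold, since otherwise $\ol{y_p}$ would be supported strictly below $\ol{p}$.
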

\begin{proof}
For each $p\in \mathbb{P}$, there is a natural map
$$\psi_p:M({\mathcal{J}_M})_p\rightarrow M_p,$$
which coincides with the identity map when $p$ is regular, and with the map $(n, (p+a)-p)\mapsto np+a$ when $p$ is free and $p+a\in G'_p$. 
As observed in Remark \ref{rem:psi-iso}, the map $\psi _p$ is an isomorphism.
Now, let $q< ^*p\in \mathbb{P}$, let $x\in M_p$ and $y\in M_q$. By Lemma \ref{lem:elkefaltava}, we have $x+y\in M_p$, and so $x+y= x+y+0_p
= x+ [(p+y)-p]=x+\varphi_{pq}(y)$ holds in $M$, where $0_p$ is the identity of the group $\widehat{G}_p$. By Corollary 
\ref{cor:presentationofM}, there exists  a unique monoid homomorphism $\psi \colon
M(\mathcal{J}_M)\to M$ which restricts to $\psi_p$ for every $p\in \mathbb{P}$. Since $M$ is primely generated and conical, $\psi$ is a surjective map.

To show that $\psi $ is injective, we use the description of
$M(\mathcal{J}_M)$ obtained above and an argument of Dobbertin
\cite[pp. 172--173]{Dobb84}. Since $\psi_p$ is an isomorphism for all $p\in \mathbb P$, and in order to simplify the notation, we will identify 
$M(\mathcal J _M)_p$ and $M_p$ for the rest of the proof.

Let $\tilde{x},\tilde{y}\in M(\mathcal{J}_M)$ be such that $\psi
(\tilde{x})=\psi (\tilde{y})$. Adopting the notation introduced
in Section \ref{section:Isystems}, we may assume that $\tilde{x},\tilde{y}$ have representatives
 $x\in H_a$ and $y\in H_b$, respectively, for $a,b\in A(I)$, of the form
$$x=\sum\limits _{p\in \Ma (a)} \chi (a, p, x_p),\qquad y= \sum\limits _{q\in \Ma (b)} \chi (b,q,y_q) ,$$
where $x_p\in M_p$ for all $p\in \Ma (a)$ and $y_q\in M_q$ for all
$q\in \Ma (b)$.

Observe that
$$\sum\limits _{p\in \Ma (a)} x_p= \psi (\tilde{x}) = \psi (\tilde{y}) =
\sum\limits _{q\in \Ma (b)} y_q$$ in $M$. Applying \cite[Theorem
5.8]{Brook}, we deduce that $\Ma (a)=\Ma (b)$, and so $a=b$.

Since $M$ has refinement, there are elements $z_{pq}$ in $M$ such
that $x_p= \sum\limits _{q\in \Ma (b)} z_{pq}$ and $y_q= \sum\limits _{p\in \Ma
(a)} z_{pq}$. Since $M$ is primely generated, we can write each $z_{pq}$ as a finite sum
$$z_{pq}= \sum\limits_l  w_{pql} ,$$
where $w_{pql}\in M_l$ for $l\in \mathbb P$. Observe that, looking
at $x_p$ as an element of $\widehat{G}_p$, we have  $x_p= \sum\limits _q(\sum\limits
_l \widehat{\varphi} _{pl}(w_{pql}))$, and similarly, looking at $y_q$
as an element of $\widehat{G}_q$, we have  $y_q = \sum\limits _p (\sum\limits _l
\widehat{\varphi} _{ql} (w_{pql}))$.

 The following computation is performed in the group $\widehat{H}_a$
(cf. \cite{Dobb84}):
\begin{align*}
  & \sum\limits _{p,q,l} \Big(\chi (a,p, \widehat{\varphi}
_{pl}(w_{pql}))-\chi(a,l,w_{pql})\Big) \\
&  + \sum\limits_{p,q,l} \Big( \chi
(a,l,w_{pql})- \chi (a,q,\widehat{\varphi}_{ql}(w_{pql}))\Big) \\
= &   \sum\limits _{p,q,l} ( \chi(a,p,\widehat{\varphi}_{pl}(w_{pql}))- \chi
(a,q,\widehat{\varphi} _{ql}(w_{pql})) ) \\
& = \sum\limits _p\Big( \chi (a, p,\,  \sum\limits _{q}(\sum\limits _l \widehat{\varphi}
_{pl}(w_{pql}))) \Big) - \sum\limits _q \Big( \chi (a,q, \, \sum\limits _{p}(\sum\limits
_l \widehat{\varphi}
_{ql}(w_{pql})))\Big) \\
& = \Big( \sum\limits _p   \chi (a,p, x_p )\Big) - \Big( \sum\limits _q  \chi
(a,q,
y_q) \Big) \\
& =  x-y.
\end{align*}
This shows that $x-y\in U_a$, and so $\tilde{x}=\tilde{y}$ in $M_a= H_a/\sim $ (see Corollary  \ref{cor:NouSubmonoid}). This concludes the proof.
\end{proof}

\begin{rema}
 \label{remark:primes}
{\rm It can be easily checked that, for any $I$-system $\mathcal J$,  the primes of $M(\mathcal J)$ are precisely the elements
 of the form $\chi_i (x)$, for $i\in I_{{\rm reg}}$ and $x\in G_i$ and the elements of the form $\chi _j((1,x))$ for 
 $j\in I_{{\rm free}}$ and $x\in G_j$. So, $M(\mathcal J)$ is always a primely generated monoid.}
 \end{rema}

\begin{prop}\label{prop:finitely-generated}
An $I$-system $\mathcal J$ is finitely generated if and only if $M(\mathcal J)$ is a finitely generated monoid.
\end{prop}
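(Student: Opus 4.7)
For the easy direction $(\Leftarrow)$, suppose $I$ is finite and every $G_i$ is finitely generated. For each $i\in I_{\mathrm{reg}}$ pick a finite set $S_i$ generating $G_i$ as a group and set $T_i=\{\chi_i(s),\chi_i(-s):s\in S_i\}$; for each $i\in I_{\mathrm{free}}$ set $T_i=\{\chi_i((1,0_i))\}$. The plan is to show, by induction on the finite poset $I$, that every $\chi_i(x)$ with $x\in M_i$ lies in the submonoid generated by $\bigcup_{l\leq i}T_l$; the statement then follows from Corollary \ref{cor:presentationofM}. The regular case is immediate since $\chi_i$ is a group homomorphism on $G_i$. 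In the free case, writing $(n,g)=(1,g)+(n-1)(1,0_i)$ reduces the problem to $\chi_i((1,g))$; condition (c2) of Definition \ref{def:I-system} provides a decomposition $g=\sum_{k<i}\varphi_{ik}(\alpha_k)$ with $\alpha_k\in M_k$, and the defining relation $\chi_i((1,0_i))+\chi_k(\alpha_k)=\chi_i((1,\varphi_{ik}(\alpha_k)))$ lets one assemble $\chi_i((1,g))$ from $\chi_i((1,0_i))$ together with the $\chi_k(\alpha_k)$'s, which lie in $\langle\bigcup_{l<i}T_l\rangle$ by induction.

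For the converse $(\Rightarrow)$, let $g_1,\ldots,g_n$ generate $M:=M(\mathcal J)$. My first goal is to show $I$ is finite. The key general observation is that in any finitely generated commutative monoid each order-ideal $N$ is determined by the subset $N\cap\{g_1,\ldots,g_n\}$: writing $x\in N$ as $\sum a_ig_i$ and applying summand-closure, $g_i\in N$ whenever $a_i>0$, so $N$ is the order-ideal generated by $N\cap\{g_1,\ldots,g_n\}$. This gives $|\mathcal L(M)|\leq 2^n$; by Proposition \ref{prop:characideals} $\mathcal L(I)$ is finite, and since $i\mapsto I\downarrow i$ is an order-embedding $I\hookrightarrow\mathcal L(I)$, $I$ itself is finite.

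To show each $G_i$ is finitely generated, I replace every generator $g_k$ by its canonical decomposition $g_k=\sum_{j\in\Ma(a_k)}\chi_j(x_j^{(k)})$; the resulting finite family $\{\chi_{j_l}(y_l)\}_{l=1}^{m}$ still generates $M$. Fix $i\in I$ and $x\in M_i$. Any equation $\chi_i(x)=\sum_{l}n_l\chi_{j_l}(y_l)$ in $M$ forces, in view of the disjoint-union structure $\bigsqcup_aM_a$, that $j_l\leq i$ whenever $n_l>0$ and that $j_l=i$ for at least one such $l$. Absorbing the $\chi_{j_l}$'s with $j_l<i$ into $\chi_i$ via the defining relations and using injectivity of $\chi_i\colon M_i\hookrightarrow M_{I\downarrow i}$, this simplifies to
$$x\;=\;\sum_{j_l=i}n_l\,y_l\;+\;\sum_{j_l<i}n_l\,\varphi_{ij_l}(y_l)\qquad\text{in }M_i.$$
For $i\in I_{\mathrm{reg}}$ this directly writes every $x\in G_i$ as a non-negative integer combination of the finite set $\{y_l:j_l=i\}\cup\{\varphi_{ij_l}(y_l):j_l<i\}\subseteq G_i$, and a group finitely generated as a monoid is automatically finitely generated as a group. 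The main obstacle is the free case $i\in I_{\mathrm{free}}$, because $M_i=\N\times G_i$ is in general \emph{not} finitely generated as a monoid (e.g.\ $\N\times\Z$ is not), so $G_i$'s generation cannot be read off from that of $M_i$. Instead, I take $x=(1,g)$ and read off $\N$-components in the displayed equation: writing $y_l=(m_l,h_l)$ for $j_l=i$, the constraint $\sum_{j_l=i}n_lm_l=1$ forces exactly one $n_l=1$ with the corresponding $m_l=1$, and hence $g=h_l+\sum_{j_l<i}n_l\,\varphi_{ij_l}(y_l)$. Thus $G_i=H+N$, where $H$ is the finite set of $h_l$ from generators $y_l=(1,h_l)$ at level $i$ and $N$ is the finitely generated submonoid of $G_i$ generated by $\{\varphi_{ij_l}(y_l):j_l<i\}$; the subgroup of $G_i$ generated by $H\cup N$ contains $H+N=G_i$, hence equals $G_i$, showing that $G_i$ is finitely generated as a group.
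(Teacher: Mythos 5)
Your proof is correct, and in the converse direction it takes a genuinely different route from the paper's. For the finiteness of $I$, the paper passes to the antisymmetrization $M(\mathcal J^{\equiv})$ (obtained by making all the groups trivial) and invokes Pierce's description of primitive monoids to identify $I$ with a minimal generating set; you instead observe that in a finitely generated commutative monoid every order-ideal is determined by which generators it contains, so $\mathcal L(M(\mathcal J))$ is finite, and finiteness of $I$ then follows from the lattice isomorphism of Proposition \ref{prop:characideals} together with the order-embedding $i\mapsto I\dnw i$. This is more elementary and self-contained. For the finite generation of the groups, the paper extracts, for $i$ regular, elements $g_i\in G_i$ from generators $g$ with $\chi_i(e_i)+g\le\chi_i(e_i)$, and for $i$ free argues by induction along the poset using condition (c2) of Definition \ref{def:I-system}; you instead normalize the generating set into the form $\chi_{j_l}(y_l)$, use the component decomposition $M(\mathcal J)=\bigsqcup_a M_a$ and the injectivity of $\chi_i$ on $M_i$ to land in a single equation inside $M_i$, and in the free case read off the $\N$-coordinate (the constraint $\sum n_lm_l=1$ isolating a single level-$i$ generator with $m_l=1$), thereby avoiding the induction entirely. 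Both arguments ultimately rest on the same structural facts (the presentation of Corollary \ref{cor:presentationofM} and the normal form of elements supported on $\Ma(a)$), and the forward direction is essentially the paper's. A payoff of your version is that it makes explicit why the free case needs separate treatment, namely that $\N\times G_i$ need not be finitely generated as a monoid even when $G_i$ is finitely generated as a group, a point the paper leaves implicit.
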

\begin{proof}
Assume first that $\mathcal J= (I,\le , (G_i)_{i\in I}, \varphi_{ji} (i<j))$ is a finitely generated $I$-system. By Remark \ref{rem:minimals-in-I}, for 
every $i\in I_{free}$ minimal, the semigroup $M_i$ is generated by $(1, e_i)$, and thus it is finitely generated. Then, using condition (c2) in Definition \ref{def:I-system}, it is easily seen that $M(\mathcal J)$ is generated as a monoid by the elements of the form $\chi _i(1,e_i)$, for $i\in I_{{\rm free}}$ and the elements of the form $\chi_i(  x_{i,t})$, for $i\in I_{{\rm reg}}$, where $\{ x_{i,1},\dots ,x_{i,l_i} \}$ is a finite family of semigroup generators of $G_i$.
 
Conversely, suppose that $M(\mathcal J)$ is finitely generated. We first show that $I$ is finite. Indeed consider the $I$-system $\mathcal J^{\equiv}$, with the same partition of $I$ as disjoint union of free and regular elements, and with $G^{\equiv}_i=\{ e_i \}$ for all $i\in I$. Then $M(\mathcal J^{\equiv})$ is the antisymmetrization of $M(\mathcal J)$, and so is a finitely generated monoid. It is readily seen that a minimal set of generators of the primely generated  refinement monoid $M(\mathcal J^{\equiv})$ \cite{Pierce} is precisely the set of primes $\mathbb P (M(\mathcal J^{\equiv}) )= I$ of $M(\mathcal J^{\equiv})$. Therefore $I$ is finite. 

 Now, let $i\in I_{{\rm reg}}$, and let $\mathcal G $ be a finite family of semigroup generators of $M(\mathcal J)$. If $g\in \mathcal G$ and 
 $\chi_i(e_i)+g\le \chi_i(e_i)$, then there is an element $g_i\in G_i$ such that $\chi_i(g_i)= \chi _i(e_i)+g$. It is easy to check that the finite family
$$\{ g_i : g\in \mathcal G \text{ and } \chi_i(e_i) +g\le \chi_i(e_i) \}    $$
 generates $G_i$ as a semigroup. Therefore, $G_i$ is a finitely generated group if $i\in I_{{\rm reg}}$. 
 Now, if $i\in I_{{\rm free}}$ then one shows using (c2) in Definition \ref{def:I-system} and induction that $G_i$ is also a finitely generated abelian group.
 \end{proof}

\section{The refinement property of $M(\mathcal J)$ for finitely generated $I$-systems}
\label{sect:ref-for-fg}

In this section we show that, for any finitely generated $I$-system $\mathcal J$, the monoid $M(\mathcal J)$ has the refinement property.
Indeed, we prove more generally this result for arbitrary $I$-systems over finite posets $I$. This will be used in the next section to show the refinement property for 
the monoids associated to arbitrary $I$-systems. We remark that a main difficulty in establishing the refinement property for $M(\mathcal J)$ is that the components $M_a$, for $a\in A(I)$, do not satisfy refinement in general. 

The first step is to show that the result
holds when $I$ satisfies the additional condition that, for
every $p\in I$, the set $ I \uparrow p=\{ i\in I : p\le i\}$ is a
chain (Proposition \ref{prop:chins-up}). For this result, we do not require $I$ to be finite. Given $i\in I$, we will denote  the lower subset $I \downarrow i=\{x\in I : x\leq i\}$ by $a(i)$. 
\vspace{.2truecm}

We establish in the next result one of the crucial steps for proving the refinement property.

\begin{lem}
\label{lem:refinement-down} Let $\mathcal J = (I, \le , G_i,
\varphi_{ji}(i<j))$ be an $I$-system.
Let $i\in I$ and consider elements
$x_1,x_2,y_1,y_2\in M_i$ such that $x_1+x_2=y_1+y_2$ in $M_i$. 
Then there are elements $z^{(rs)}$ such that $z^{(11)}=\chi (a(i),
i, z_{11})$, $z^{(22)}= \chi (a(i), i, z_{22})$, for some elements
$z_{11},z_{22}$ in $M_i$,  $z^{(12)}, z^{(21)}\in \bigsqcup _{b\subseteq
a(i)} H_b$ such that the identities $[\chi (a(i), i, x_r)]= \sum\limits _s
[z^{(rs)}]$ for $r=1,2$ and $[\chi (a(i), i, y_s)]=\sum\limits _r
[z^{(rs)}]$ for $s=1,2$ hold in $M(\mathcal J)$.
\end{lem}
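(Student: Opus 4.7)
My plan is to exhibit an explicit refinement by case analysis on $i$. In every case the underlying calculation is that inside $\widehat{H}_{a(i)}$ the element $\chi(a(i),i,-)$ is additive in its third argument, so that
$$[\chi(a(i),i,x_1)]+[\chi(a(i),i,x_2)]=[\chi(a(i),i,x_1+x_2)]=[\chi(a(i),i,y_1+y_2)]$$
holds in $M(\mathcal J)$; the task is to redistribute this single element into four pieces with the prescribed diagonal form.

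First suppose $i\in I_{{\rm reg}}$, so that $M_i=G_i$ is a group. Then I set
$$z_{11}=x_1,\quad z_{22}=y_2,\quad z^{(12)}=0\in H_{\es},\quad z^{(21)}=\chi(a(i),i,y_1-x_1)\in H_{a(i)}.$$
The only nontrivial refinement identity, $[z^{(21)}]+[\chi(a(i),i,y_2)]=[\chi(a(i),i,x_2)]$, then follows from $y_1-x_1+y_2=x_2$ in $G_i$.

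Next suppose $i\in I_{{\rm free}}$, and write $x_r=(n_r,a_r)$, $y_s=(m_s,b_s)$ with $n_r,m_s\in\N$ and $a_r,b_s\in G_i$. If the $\N$-coordinates differ, say $n_1>m_1$, then $(n_1-m_1,a_1-b_1)\in M_i$, so I take $z_{11}=y_1$, $z_{22}=x_2$, $z^{(21)}=0$, and $z^{(12)}=\chi(a(i),i,(n_1-m_1,a_1-b_1))$; a direct calculation using $x_1+x_2=y_1+y_2$ in $M_i$ yields the four refinement identities. The case $n_1<m_1$ is handled symmetrically, with the surplus carried by $z^{(21)}=\chi(a(i),i,(m_1-n_1,b_1-a_1))$.

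The main obstacle, and the one that forces condition (c2) of Definition~\ref{def:I-system} into play, is the subcase $n_1=m_1$ (so also $n_2=m_2$): the natural candidate $y_1-x_1=(0,b_1-a_1)$ does not lie in $M_i$, so one cannot remain within $H_{a(i)}$. Here I use the surjectivity of $\bigoplus_{k<i}\varphi_{ik}\colon \bigoplus_{k<i}M_k\to G_i$ to write $b_1-a_1=\sum_t \varphi_{i,k_t}(v_t)$ with $k_t<i$ and $v_t\in M_{k_t}$, and take $z_{11}=x_1$, $z_{22}=y_2$, $z^{(12)}=0$, and $z^{(21)}=\sum_t \chi(b,k_t,v_t)\in H_b$, where $b=\bigcup_t a(k_t)\subseteq a(i)\setminus\{i\}$. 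The presentation in Corollary~\ref{cor:presentationofM} then gives $[\chi_i(x_1)]+[z^{(21)}]=[\chi_i(x_1+\sum_t\varphi_{i,k_t}(v_t))]=[\chi_i(y_1)]$, and the two remaining identities follow by the same calculation, using $n_1=m_1$, $n_2=m_2$, and $a_1+a_2=b_1+b_2$.
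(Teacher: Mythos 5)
Your proof is correct and follows essentially the same route as the paper: refine the $\N$-coordinates and the $G_i$-coordinates of the equation separately, and invoke condition (c2) of Definition \ref{def:I-system} to push a purely group-valued off-diagonal entry down to components strictly below $i$. The paper organizes this as a single double refinement (choosing the diagonal entries $\alpha_{11},\alpha_{22}$ positive) rather than a case split on the sign of $n_1-m_1$, but your three cases are precisely the instances of that construction in which one off-diagonal entry vanishes entirely.
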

\begin{proof}
If $i\in I_{reg}$, then $M_i$ is a group, whence the result is clear. So assume that $i\in
I_{free}$. Write $x_r=(n_r,g_r)\in \mathbb N\times G_i$ and
$y_s=(m_s,h_s)\in \mathbb N\times G_i$, for $r,s\in \{1,2\}$.
Consider a refinement of the equality $n_1+n_2=m_1+m_2$ of the form
$$\begin{tabular}{|l|l|l|}
\cline{2-3} \multicolumn{1}{l|}{} & ${m_1}$ & ${m_2}$ \\ \hline
${n_1}$ & ${\alpha_{1,1}}$ & ${\alpha_{1,2}}$ \\ \hline ${n_2}$ &
${\alpha_{2,1}}$ & ${\alpha_{2,2}}$ \\ \hline
\end{tabular}$$
with $\alpha_{1,1}$ and $\alpha _{2,2}$ positive integers and
$\alpha_{1,2}$ and $\alpha _{2,1}$ non-negative integers. On the
other hand we may consider a refinement
$$\begin{tabular}{|l|l|l|}
\cline{2-3} \multicolumn{1}{l|}{} & ${h_1}$ & ${h_2}$ \\ \hline
${g_1}$ & ${\beta_{1,1}}$ & ${\beta_{1,2}}$ \\ \hline ${g_2}$ &
${\beta_{2,1}}$ & ${\beta_{2,2}}$ \\ \hline
\end{tabular}$$
of the identity $g_1+g_2= h_1+h_2$ in the group $G_i$. For the
indices $(r,s)$ such that $\alpha _{r,s}=0$, use condition (c2) in Definition \ref{def:I-system} to
find finitely many elements $\delta_k ^{(rs)}\in M_k$, with $k<i$, satisfying that $\sum\limits _{k} \varphi_{ik} (\delta ^{(rs)}_k)= \beta_{r,s}$. Now take $z_{11}=
(\alpha_{1,1}, \beta_{1,1})$, $z_{22}= (\alpha_{2,2},\beta_{2,2})$,
and for $r\ne s$, take $z^{(rs)}= \chi (a(i), i ,
(\alpha_{r,s},\beta _{r,s}))$ in case $\alpha_{r,s}\ne 0$ and
$z^{(r,s)}= \sum\limits _k \chi (a(k),k, \delta _k^{(rs)})\in H_b$ in case $\alpha_{r,s}=0$, where $b\in A(I)$ and $b\subset a(i)$. It is
then clear by Lemma \ref{lem:equivrels} that the identities $[\chi (a(i), i, x_r)]= \sum\limits _s
[z^{(rs)}]$ for $r=1,2$ and $[\chi (a(i), i, y_s)]=\sum\limits _r
[z^{(rs)}]$ for $s=1,2$ hold in $M(\mathcal J)$.
\end{proof}

Now, we are ready to prove the desired result for a special kind of posets.

\begin{prop}
\label{prop:chins-up} Let $\mathcal J = (I, \le , G_i,
\varphi_{ji}(i<j))$ be an $I$-system. Assume that, for
every $p\in I$, the set $ I \uparrow p=\{ i\in I : p\le i\}$ is a
chain. Then, the associated monoid $M(\mathcal J )$ is a refinement
monoid.
\end{prop}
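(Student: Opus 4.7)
Given $\alpha+\beta=\gamma+\delta$ in $M(\mathcal J)$, pick $s\in A(I)$ containing representatives of all four elements and set $\Ma(s)=\{q_1,\dots,q_r\}$. The first step is to exploit the chain-up hypothesis to decouple the problem over $\Ma(s)$. If two distinct maximal elements $q_k,q_l\in\Ma(s)$ admitted a common lower bound $p\in I$, both would lie in the chain $I\uparrow p$ and hence be comparable, contradicting maximality; so the lower sets $a(q_k)=I\dnw q_k$ are pairwise disjoint and $s=\bigsqcup_k a(q_k)$. This splits $\widehat H_s=\bigoplus_k\widehat H_{a(q_k)}$; moreover, each defining generator $\chi(s,i,u)-\chi(s,j,\widehat\varphi_{ji}(u))$ of $U_s$ has $j\in\Ma(s)$ and $i<j$, so $i$ lies in the unique $a(q_k)$ containing $j=q_k$ and the generator sits in a single summand. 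Hence $U_s=\bigoplus_k U_{a(q_k)}$ and $\widetilde G_s=\bigoplus_k\widetilde G_{a(q_k)}$, so addition in $M_s$ is coordinatewise, and every element of $M_s$ projects to a tuple of components in $M_{a(q_k)}\cup\{0\}$ (with $0$ only possible when $q_k\in I_{{\rm reg}}$).

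It therefore suffices to refine the projected equation $\alpha_k+\beta_k=\gamma_k+\delta_k$ in each $M_{a(q_k)}\cup\{0\}$ separately; after handling the trivial case where some component is $0$, assume each is a nonzero element of $M_{a(q_k)}$. Since $q_k$ is the unique maximal element of $a(q_k)$, the defining relations of Lemma \ref{lem:equivrels} let us absorb any $\chi_i$-contribution into the maximum via $\chi_{q_k}(y)+\chi_i(u)=\chi_{q_k}(y+\widehat\varphi_{q_k i}(u))$, so every element of $M_{a(q_k)}$ has the form $\chi_{q_k}(x)$ for some $x\in M_{q_k}$. Moreover, the map $\chi_{q_k}\colon M_{q_k}\to M_{a(q_k)}$ is injective: writing any expression of $\chi(a(q_k),q_k,w)$ as a sum $\sum_l(\chi(a(q_k),i_l,u_l)-\chi(a(q_k),q_k,\widehat\varphi_{q_k i_l}(u_l)))$ and grouping by $i<q_k$, vanishing of each $i$-coordinate forces $\sum_{i_l=i}u_l=0$ for every $i$, which in turn forces the $q_k$-coordinate $-\sum_l\widehat\varphi_{q_k i_l}(u_l)$ to vanish, so $w=0$. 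Thus the equation in $M_{a(q_k)}$ translates to an honest equation $x_\alpha+x_\beta=x_\gamma+x_\delta$ in $M_{q_k}$, and Lemma \ref{lem:refinement-down} applies directly.

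Combining the refinements obtained for each $q_k$, one reassembles $w,x,y,z\in M(\mathcal J)$ refining the original equation; since addition in $M_s$ is coordinatewise over $\Ma(s)$, this reassembly is automatic. The main technical subtlety lies in Lemma \ref{lem:refinement-down} itself (already established in advance): whenever the canonical $\N$-refinement of the $\N\times G_{q_k}$-structure of $M_{q_k}$ produces a zero $\N$-entry at an off-diagonal position, the lemma uses condition (c2) of Definition \ref{def:I-system} to realize the corresponding $G_{q_k}$-contribution as a sum of $\widehat\varphi_{q_k,i}$-images from strictly smaller indices, so that piece becomes a bona fide element of a smaller order-ideal $M_b$ with $b\subsetneq a(q_k)$. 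This is precisely why the chain-up hypothesis suffices: it guarantees that a single application of Lemma \ref{lem:refinement-down} at each maximal element of $s$ can be carried out independently and glued together, without the interference that would otherwise arise from comparable maximals sharing common lower parts.
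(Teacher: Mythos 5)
Your first reduction --- using the chain-up hypothesis to show that the sets $a(q_k)=I\dnw q_k$, $q_k\in \Ma(s)$, are pairwise disjoint, so that the problem decouples over the maximal elements of $s$ --- is correct and is exactly the paper's first step (phrased there as a direct sum of order-ideals $N_k=\bigsqcup_{b\subseteq a(q_k)}M_b$). The gap is in the second step. After decoupling, you assume that each of the four components is either $0$ or a \emph{nonzero element of $M_{a(q_k)}$}, i.e.\ of the form $\chi_{q_k}(x)$ with $x\in M_{q_k}$, and then invoke Lemma~\ref{lem:refinement-down}. But the component of, say, $\beta$ in the $k$-th block lies in $M_{b}$ where $b=a_\beta\cap a(q_k)$, and $b$ can be a \emph{proper} lower subset of $a(q_k)$ not containing $q_k$; such an element is not of the form $\chi_{q_k}(x)$ and cannot be moved into $M_{a(q_k)}$, since $M(\mathcal J)$ is a disjoint union of components and the absorption relation $\chi_{q_k}(y)+\chi_i(u)=\chi_{q_k}(y+\varphi_{q_k i}(u))$ only applies in the presence of a summand already supported at $q_k$ (indeed, for $q_k$ free, $\varphi_{q_k i}(u)\in G_{q_k}$ is not even an element of $M_{q_k}=\N\times G_{q_k}$). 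A minimal example: $I=\{1,2\}$, $1<2$, both free with trivial groups, and the equation $q+p=q+2p$ where $p=\chi_1(1)$, $q=\chi_2(1)$; here $p$ and $2p$ live in $M_{\{1\}}\subsetneq M_{a(2)}$ and the needed refinement (e.g.\ $w=q$, $x=2p$, $y=p$, $z=0$) is not produced by your scheme.

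These ``mixed'' configurations are precisely what the paper's case analysis is for: after reducing to a single maximal element $k$, it distinguishes whether $k$ belongs to each of $A_1,A_2,B_1,B_2$ (the sets of maximal elements of the supports of the four terms), applies Lemma~\ref{lem:refinement-down} only in the case $k\in A_1\cap A_2\cap B_1\cap B_2$, and in the remaining cases writes down explicit refinement entries using the correction terms $u_i$ of equation~\eqref{eq:tilderels} (see \eqref{eq:K(B_s)}--\eqref{eq:usandxys2}). Your argument covers only the first of these cases, so as written it does not prove the proposition; you would need to supply the analogous constructions for the cases where some of the supports do not reach $q_k$.
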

\begin{proof}
Assume that $x^{(1)}+x^{(2)}=y^{(1)}+y^{(2)}$ in $M:=M(\mathcal J)$,
where $x^{(r)}\in M_{a_r}$ and $y^{(s)}\in M_{b_s}$ for $a_r,b_s\in
A(I)$.  Then $a:= a_1\cup a_2=b_1\cup b_2$. Write $A_r=\Ma (a_r)$,
$B_s= \Ma (b_s)$, and $A= \Ma (a)$. Observe that $A_1\cap
A_2\subseteq A\subseteq A_1\cup A_2$ and similarly $B_1\cap
B_2\subseteq A\subseteq B_1\cup B_2$.

We first reduce to the case where $A$ is a singleton. For this, observe
that, because of our hypothesis that $I\uparrow p$ is a chain for
every $p\in I$,  the sets $a(k)$, for $k\in A$, are
mutually disjoint. Now, for each $k\in A$, let $N_k=\bigsqcup_{b\subseteq a(k)}M_b$ be the order-ideal of $M$
generated by the archimedian component $M_{a(k)}$. By the above remark, we have that the internal direct sum of order-ideals
$ N:=\sum\limits^{\oplus} _{k\in A} N_k $
is the order ideal of $M$ generated by $x^{(1)}+x^{(2)}=y^{(1)}+y^{(2)}$. Restricting the equality $x^{(1)}+x^{(2)}=y^{(1)}+y^{(2)}$ to each
$N_k$, $k\in A$, we may thus assume that $A=\{k\}$ for a single
element $k\in I$ (and consequently $a=a(k)$). Note that there exist
$r,s\in \{ 1,2 \}$ such that $k\in A_r\cap B_s$. Without loss of
generality, we shall assume that $r=s=1$, so that $k\in A_1\cap
B_1$.

By Lemma \ref{lem:equivrels}, for $r=1,2$ we can take representatives $\tilde{x}^{(r)}$ of $x^{(r)}$ in
$H_{a_r}$ of the form $\tilde{x}^{(r)}= \sum\limits _{i\in A_r}
\tilde{x}^{(r)}_i$, where $\tilde{x}^{(r)}_i\in M_i$ for $i\in A_r$
(and where $\tilde{x}^{(r)}_i=0_i\in \widehat{G}_i$ if $i\in
a_r\setminus A_r$). Similarly, we can  take representatives $\tilde{y}^{(s)}$
of $y^{(s)}$ in $H_{b_s}$ of the form $\tilde{y}^{(s)}= \sum\limits _{i\in
B_s} \tilde{y}^{(s)}_i$, where $\tilde{y}^{(s)}_i\in M_i$ for $i\in
B_s$.

Since $\tilde{x}^{(1)}+\tilde{x}^{(2)}\sim
\tilde{y}^{(1)}+\tilde{y}^{(2)}$ in $H_a$, it follows that there are
elements $u_i\in \widehat{G}_i$ for $i<k$ such that
\begin{equation}
\label{eq:tilderels}
 \tilde{x}^{(1)}+\tilde{x}^{(2)}-(
\tilde{y}^{(1)}+\tilde{y}^{(2)}) =\sum\limits _{i<k} \Big( \chi
(a,i,u_i)-\chi (a,k, \widehat{\varphi}_{ki}(u_i))\Big) .
\end{equation}  For $i\in a \setminus ( A_1\cup A_2\cup B_1\cup B_2)$ we
thus obtain that $u_i=0_i$.

We now proceed to obtain the refinement. We need to distinguish
several cases. To start with, observe that the refinement is trivial
in case $x^{(2)}=0$ or $y^{(2)}=0$, so we will assume that $A_2\ne
\emptyset $ and $B_2 \ne \emptyset$. 

Since for any $i=1,2$ we have $a_i\subseteq a_1\cup a_2=a(k)$ and $b_i\subseteq b_1\cup b_2=a(k)$, we have that $y\leq k$ for every $y\in A_i$ and for every $y\in B_i$ ($i=1,2$). 

Assume first that $k\in A_1\cap A_2\cap B_1\cap B_2$. Then we have $A_1=A_2=B_1=B_2=\{
k\}$. So, we obtain from (\ref{eq:tilderels}) that
$\tilde{x}_k^{(1)}+\tilde{x}_k^{(2)}=\tilde{y}^{(1)}_k+\tilde{y}^{(2)}_k$ in $M_k$, and the result
follows from Lemma \ref{lem:refinement-down}.

A second case appears when $k\in A_1\cap A_2\cap B_1$ but $k\notin
B_2$. Then we have $A_1=A_2=B_1=\{k\}$, and $b_2=\bigcup\limits_{d\in B_2}a(d)\subsetneq a$. From (\ref{eq:tilderels}), we get
\begin{equation}
\label{eq:K(B_s)} \tilde{x}_k^{(1)}+\tilde{x}^{(2)}_k
-\tilde{y}_k^{(1)}= - \sum\limits _{d\in B_2}
\widehat{\varphi}_{kd}(u_d),\qquad \tilde{y}^{(2)}_d= - u_d \quad (d\in
B_2).
\end{equation} Set
\begin{equation}
\label{eq:z8} z^{(22)}= \tilde{y}^{(2)}, \quad z^{(12)}=0,\quad
z^{(11)}=\tilde{x}^{(1)},\quad  z^{(21)}= \chi (a, k,
\tilde{x}_k^{(2)}- \sum\limits _{d\in B_2} \widehat{\varphi} _{kd}
(\tilde{y}^{(2)}_d )).
\end{equation}
Then it follows from (\ref{eq:K(B_s)}) that $x^{(r)}= \sum\limits _s
[z^{(rs)}]$ and $y^{(s)}= \sum\limits_r [z^{(rs)}]$ in $M(\mathcal J)$,
giving the desired refinement. The case where $k\in A_1\cap B_1\cap B_2 $ and $k\notin A_2$ is
treated similarly.

Finally we consider the case where $k\notin A_2\cup B_2$. (Recall
that we are assuming throughout that $k\in A_1\cap B_1$). Then we have $A_1=B_1=\{k\}$ and $a_2, b_2\subsetneq a$. We have
from (\ref{eq:tilderels}):
\begin{align}
\label{eq:usandxys} \tilde{x}_k^{(1)}& -\tilde{y}_k^{(1)}=- \sum\limits
_{d\in A_2\cup B_2} \widehat{\varphi}_{kd} (u_d), \qquad u_d=x^{(2)}_d
\,
\text{ for } d\in A_2\setminus B_2,\\
\label{eq:usandxys2}
 u_d= & -y^{(2)}_d \, \text{ for } d\in B_2\setminus A_2, \qquad
 u_d=  x^{(2)}_d-y^{(2)}_d \, \text{ for } d\in A_2\cap B_2,
\end{align}

Set $z^{(22)}=0$, $z^{(12)}= \tilde{y}^{(2)}=\sum\limits _{d\in B_2}
\tilde{y}^{(2)}_d$, $z^{(21)}= \tilde{x}^{(2)}=\sum\limits _{d\in A_2}
\tilde{x}^{(2)}_d$, and
$$z^{(11)}= \chi \Big( a(k), k, \tilde{x}^{(1)}_k - \sum\limits _{d\in B_2}
\widehat{\varphi}_{kd} (\tilde{y}^{(2)}_d)\Big). $$ It is clear that
$x^{(2)}= [z^{(21)}]+ [z^{(22)}]$, $y^{(2)}= [z^{(12)}]+[z^{(22)}]$
and $x^{(1)}= [z^{(11)}]+[z^{(12)}]$. Using equations
(\ref{eq:usandxys}) and (\ref{eq:usandxys2}), we also obtain
$y^{(1)}= [z^{(11)}]+[z^{(21)}]$. This concludes the proof.
\end{proof}

Now we start our approach to the proof of the refinement property for the monoids associated to systems over finite posets. We first analyze the functoriality of our main construction. 

\begin{defi}\label{def:Funct1}
{\rm Let $I^{(t)}$ be posets ($t=1,2$), and let $\mathcal J_t= (I^{(t)}, \le, G_i^{(t)}, \varphi ^{(t)}_{ji} (i<j) )$ be $I^{(t)}$-systems. A homomorphism of systems $f\colon \mathcal J _1\to \mathcal J _2$ consists of an order-preserving map
$\psi\colon I_1\to I_2$ such that $\psi (i) <\psi (j)$ whenever $i<j$ (that is, it is injective on chains),  
$\psi (I^{(1)}_{{\rm free}} )\subseteq I^{(2)}_{{\rm free}}$ and 
$\psi (I^{(1)}_{{\rm reg}} )\subseteq I^{(2)}_{{\rm reg}}$,
and a family of group homomorphisms $f_i \colon G^{(1)}_i \to G^{(2)}_{\psi (i)} $ such that for $i<j$ in $I^{(1)}$ the following diagram is commutative:
\begin{equation}\label{diagram:camell}
\begin{CD}
M_i^{(1)} @>\varphi_{ji}^{(1)}>> G_j^{(1)}\\
   @V\ol{f}_i VV  @VVf_j V\\
M^{(2)}_{\psi (i)} @>\varphi_{\psi(j)\psi(i)}^{(2)}>> G_{\psi (j)}^{(2)}
\end{CD}
 \end{equation}
where for $i\in I^{(1)}_{{\rm reg}}$, $\ol{f}_i =  f_i$, and,  for $i\in I^{(1)}_{{\rm free}}$, $\ol{f}_i\colon M_i^{(1)}\to M^{(2)}_{\psi (i)}$ is defined by
$\ol{f}_i (n,h)= (n,f_i (h))$, for $n\in \N$ and $h\in G^{(1)}_i$.} 
\end{defi}

We have the following result:

\begin{lem}\label{lem:Funct2}
Let $I^{(t)}$ be posets ($t=1,2$), and let $\mathcal J_t$ be $I^{(t)}$-systems. Then, any homomorphism of systems $f\colon \mathcal J _1\to \mathcal J _2$ 
induces a monoid homomorphism $M(f)\colon M(\mathcal J _1)\to M(\mathcal J_2)$.
\end{lem}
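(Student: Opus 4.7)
The plan is to use the presentation of $M(\mathcal J_1)$ furnished by Corollary \ref{cor:presentationofM} and Notation \ref{nota:chisubi}: it is the monoid generated by symbols $\chi_i^{(1)}(x)$, for $i\in I^{(1)}$ and $x\in M_i^{(1)}$, subject only to the defining relations $\chi_j^{(1)}(x)+\chi_i^{(1)}(y)=\chi_j^{(1)}(x+\varphi_{ji}^{(1)}(y))$ for $i<j$. Thus to produce a monoid homomorphism $M(f)\colon M(\mathcal J_1)\to M(\mathcal J_2)$ it suffices to assemble, for each $i\in I^{(1)}$, a semigroup homomorphism $\eta_i\colon M_i^{(1)}\to M(\mathcal J_2)$ satisfying the analogue of the defining relations, and then to invoke the universal property.

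The natural choice is $\eta_i:=\chi_{\psi(i)}^{(2)}\circ\ol f_i$. This is a composition of semigroup homomorphisms (note that in the free case $\ol f_i(n_1,h_1)+\ol f_i(n_2,h_2)=(n_1+n_2,f_i(h_1)+f_i(h_2))=\ol f_i((n_1,h_1)+(n_2,h_2))$, so $\ol f_i$ indeed respects addition), hence is itself a semigroup homomorphism. Since the hypothesis on $\psi$ guarantees $\psi(i)<\psi(j)$ in $I^{(2)}$ whenever $i<j$ in $I^{(1)}$, and since $\psi$ preserves the free/regular partition so that $\ol f_i$ maps $M_i^{(1)}$ into $M_{\psi(i)}^{(2)}$, the defining relation in $M(\mathcal J_2)$ gives
$$\eta_j(x)+\eta_i(y)=\chi_{\psi(j)}^{(2)}(\ol f_j(x))+\chi_{\psi(i)}^{(2)}(\ol f_i(y))=\chi_{\psi(j)}^{(2)}\bigl(\ol f_j(x)+\varphi^{(2)}_{\psi(j)\psi(i)}(\ol f_i(y))\bigr),$$
while
$$\eta_j(x+\varphi_{ji}^{(1)}(y))=\chi_{\psi(j)}^{(2)}\bigl(\ol f_j(x+\varphi_{ji}^{(1)}(y))\bigr).$$
These two expressions coincide provided $\ol f_j(x+\varphi_{ji}^{(1)}(y))=\ol f_j(x)+\varphi^{(2)}_{\psi(j)\psi(i)}(\ol f_i(y))$, which is exactly what the commutativity of diagram (\ref{diagram:camell}) delivers.

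The one subtlety to watch — and really the only place where anything beyond bookkeeping is needed — is the interpretation of $x+\varphi_{ji}^{(1)}(y)$ when $j\in I_{free}^{(1)}$: by the convention recorded just before Lemma \ref{lem:well-defsum}, this means $(n,g+\varphi_{ji}^{(1)}(y))\in\N\times G_j^{(1)}$ when $x=(n,g)$. Applying $\ol f_j$ componentwise yields $(n,f_j(g)+f_j(\varphi_{ji}^{(1)}(y)))$; the commutativity $f_j\circ\varphi_{ji}^{(1)}=\varphi^{(2)}_{\psi(j)\psi(i)}\circ\ol f_i$ then converts this into $(n,f_j(g))+\varphi^{(2)}_{\psi(j)\psi(i)}(\ol f_i(y))=\ol f_j(x)+\varphi^{(2)}_{\psi(j)\psi(i)}(\ol f_i(y))$, using the same convention on the target side. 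In the regular case ($j\in I_{reg}^{(1)}$) the verification is immediate since $\ol f_j=f_j$ is a group homomorphism. Hence the family $\{\eta_i\}$ respects the defining relations and determines the desired monoid homomorphism $M(f)$.
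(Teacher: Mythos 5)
Your proof is correct and follows essentially the same route as the paper: define $M(f)$ on the generators by $\chi_i(x)\mapsto\chi_{\psi(i)}(\ol f_i(x))$, invoke the presentation from Corollary \ref{cor:presentationofM}, and verify the defining relations via the commutativity of diagram (\ref{diagram:camell}). Your extra remarks on the additive convention for $x+\varphi_{ji}(y)$ in the free case are a welcome clarification but do not change the argument.
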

\begin{proof}
We set $M(f) (\chi _i(x)) = \chi _{\psi (i)}(\ol{f}_i(x))$ for all $x\in M^{(1)}_i$, $i\in I^{(1)}$. 
By Corollary \ref{cor:presentationofM}, to show that this is a well-defined homomorphism, it is enough to show that, if $i<j$ in $I^{(1)}$,
$x\in M^{(1)}_j$ and $y\in M^{(1)}_i$, then
$$\chi_{\psi(j)}(\ol{f}_j (x)) + \chi_{\psi (j)}(\ol{f}_i (y)) = \chi_{\psi (j)}(\ol{f}_j( x+\varphi_{ji}^{(1)} (y))) .$$
For this, observe that
\begin{align*}
 & \chi_{\psi (j)}(\ol{f}_j  (x+\varphi _{ji}^{(1)} (y)))  = \chi_{\psi (j)}(\ol{f}_j (x) + f_j(\varphi ^{(1)}_{ji} (y))) \\
 & = \chi_{\psi (j)}(\ol{f}_j(x) + \varphi^{(2)}_{\psi (j)\psi(i)} (\ol{f}_i (y))) 
= \chi_{\psi (j)}(\ol{f}_j (x)) + \chi _{\psi (i)}(\ol{f}_i (y))\, ,
\end{align*}
where we have used the commutativity of the diagram (\ref{diagram:camell}) for the second equality.
\end{proof}

Recall that given a poset $I$, and an element $i\in I$, the {\it lower cover} of $i$ in $I$ is the set 
$$\rL(I,i)=\{j\in I : j<i \text{ and } [j,i]=\{j,i\}\}.$$ 
Under certain circumstances we can pullback an $I$-system, as follows.

\begin{lem}
 \label{lem:pullbacking-system}
Let $I^{(1)}, I^{(2)}$ be finite posets, let $\mathcal J _2= (I^{(2)},\le , G^{(2)}_i, \varphi^{(2)}_{ji} (i<j)) $ be an $I^{(2)}$-system, 
and let $\psi \colon I^{(1)}\to I^{(2)}$ be an order-preserving surjective map such that $\psi (i) < \psi (j)$ for $i<j$ in $I^{(1)}$.
 Assume moreover that $\psi $ induces a bijection from $\rL (I^{(1)}, i)$ onto $\rL (I^{(2)}, \psi (i) )$ for all $i\in I^{(1)}$.
 Set $G^{(1)}_i= G^{(2)}_{\psi (i)}$ for all $i\in I^{(1)}$, and $\varphi ^{(1)}_{ji} = \varphi ^{(2)} _{\psi (j) \psi (i)}$ 
 for all $i,j\in I^{(1)}$ with $i<j$. Then, $\mathcal J _1 = (I^{(1)},\le , G^{(1)}_i, \varphi^{(1)}_{ji} (i<j) )$ is an $I^{(1)}$-system, and there is a natural 
 homomorphism of systems $f\colon \mathcal J _1 \to \mathcal J _2$ inducing a surjective monoid homomorphism
 $M(f)\colon M(\mathcal J _1) \to M(\mathcal J _2)$.
 \end{lem}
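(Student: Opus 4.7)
The plan is to verify the two conditions of Definition \ref{def:I-system} for $\mathcal{J}_1$, to identify $f$ as a homomorphism of systems, and then to apply Lemma \ref{lem:Funct2} to obtain $M(f)$ and check surjectivity.

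First, I specify the partition of $I^{(1)}$ by pullback: $I^{(1)}_{\mathrm{free}} = \psi^{-1}(I^{(2)}_{\mathrm{free}})$ and $I^{(1)}_{\mathrm{reg}} = \psi^{-1}(I^{(2)}_{\mathrm{reg}})$. With this choice one has $M^{(1)}_i = M^{(2)}_{\psi(i)}$ and $\widehat{G}^{(1)}_i = \widehat{G}^{(2)}_{\psi(i)}$ in both the free and regular cases, so the definitions $G^{(1)}_i = G^{(2)}_{\psi(i)}$ and $\varphi^{(1)}_{ji} = \varphi^{(2)}_{\psi(j)\psi(i)}$ are unambiguous. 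Functoriality condition (c1) for $\mathcal{J}_1$ follows at once from (c1) for $\mathcal{J}_2$, since the assumption that $\psi$ is injective on chains turns any chain $i<j<k$ in $I^{(1)}$ into a chain $\psi(i)<\psi(j)<\psi(k)$ in $I^{(2)}$.

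The main obstacle is condition (c2) for $\mathcal{J}_1$, because the hypothesis supplies only a bijection on \emph{lower covers}, whereas (c2) quantifies over all $k<i$. The key step is to upgrade the hypothesis to a statement about the whole lower set, namely
\[
\psi(I^{(1)}\downarrow i) = I^{(2)}\downarrow \psi(i) \qquad (i\in I^{(1)}).
\]
I would prove this by induction on the height of $i$ in the finite poset $I^{(1)}$. The inclusion $\subseteq$ is order preservation; for $\supseteq$, given $j'<\psi(i)$ one picks a lower cover $j'_1\in \rL(I^{(2)},\psi(i))$ with $j'\leq j'_1$, uses the hypothesis to lift $j'_1=\psi(j_1)$ with $j_1\in \rL(I^{(1)},i)$, and applies the inductive hypothesis at $j_1$ to obtain $j' \in \psi(I^{(1)}\downarrow j_1)\subseteq \psi(I^{(1)}\downarrow i)$. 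Once this is established, (c2) for $\mathcal{J}_1$ is immediate: given $g\in G^{(1)}_i = G^{(2)}_{\psi(i)}$, decompose $g=\sum_t \varphi^{(2)}_{\psi(i) k'_t}(x_t)$ by (c2) for $\mathcal{J}_2$, lift each $k'_t$ to $k_t\in I^{(1)}\downarrow i$ with $\psi(k_t)=k'_t$, and note that $k_t<i$ since $\psi$ is injective on chains and $k'_t<\psi(i)$. The sum then reads $g = \sum_t \varphi^{(1)}_{i k_t}(x_t)$, as required.

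For the remaining assertions, the pair consisting of $\psi$ together with the identity maps $f_i = \mathrm{id}\colon G^{(1)}_i \to G^{(2)}_{\psi(i)}$ tautologically fulfils the requirements of Definition \ref{def:Funct1}, since the square \eqref{diagram:camell} becomes an identity square; hence Lemma \ref{lem:Funct2} yields the desired homomorphism $M(f)\colon M(\mathcal{J}_1)\to M(\mathcal{J}_2)$. Surjectivity of $M(f)$ reduces, via the presentation of Corollary \ref{cor:presentationofM}, to showing that each generator $\chi^{(2)}_{j'}(x)$ of $M(\mathcal{J}_2)$ lies in the image: by the surjectivity of $\psi$ one picks $j\in I^{(1)}$ with $\psi(j)=j'$, and then $x\in M^{(2)}_{j'}=M^{(1)}_j$, so $M(f)(\chi^{(1)}_j(x)) = \chi^{(2)}_{j'}(x)$, concluding the proof.
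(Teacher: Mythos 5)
Your proof is correct and takes essentially the same approach as the paper: the only nontrivial point is condition (c2), and both you and the authors establish it by lifting an element $j'<\psi(i)$ of $I^{(2)}$ to an element $j<i$ of $I^{(1)}$ using the bijection on lower covers --- the paper lifts a maximal chain of covers from $j'$ up to $\psi(i)$ step by step, which is the same argument as your induction on height. The remaining verifications (partition by pullback, condition (c1), the identity maps giving a homomorphism of systems, and surjectivity checked on generators) coincide with what the paper dismisses as straightforward.
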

\begin{proof}
 The proof is straightforward. The only thing to be remarked is that property (c2) for the $I^{(1)}$-system $\mathcal J _1$ follows from the 
 condition that  $\psi $ induces a bijection from $\rL (I^{(1)}, i)$ to $\rL (I^{(2)}, \psi (i) )$ for all $i\in I^{(1)}$. Indeed, assume that $i\in I^{(1)}_{{\rm free}}$.
 It suffices to note that, given any $j'\in I^{(2)}$ such that $j'<\psi (i)$, there exists $j\in I^{(1)}$ such that $j<i$ and $\psi (j)= j'$. 
 For this, take a chain $j'= j_0'<j_1'\cdots < j_l'= \psi (i)$ in $I^{(2)}$ such that $j_t'\in \rL(I^{(2)}, j_{t+1}')$ for $t=0,\dots , l-1$. Then, by using our hypothesis,
 we can build a sequence $j_0<j_1 \cdots <  j_l=i$ in $I^{(1)}$ such that $j_t\in \rL(I^{(1)}, j_{t+1})$ for $t=0,\dots , l-1$ and $\psi (j_t)=j_t'$ for all $t$.
 Now set $j=j_0$. 
\end{proof}

To obtain the refinement of $M(\mathcal J)$ for a general $I$-system $\mathcal J$ over a finite poset $I$, we
will use a technique introduced in \cite[Section 6]{Aposet}. In that paper,
given a finite poset $\mathbb P$ with a greatest element, another poset $\mathbb F$ is
constructed with the property that $\mathbb F \uparrow i$ is a
chain for every $i\in \mathbb F$, such that there is a surjective
order-preserving map $\Psi \colon \mathbb F\to \mathbb P$ satisfying
certain properties \cite[Proposition 6.1]{Aposet}. Denoting by $M(\mathbb P)$ the monoid generated
by $\mathbb P$ with the only relations given by the rules $p+q=p$
whenever $q<p$, it was shown in \cite[Proposition 6.5]{Aposet} that $M(\mathbb P)$ is obtained
from $M(\mathbb F)$ by a sequence of crowned pushouts (see below for
the definition). We aim here to obtain a corresponding result for
the monoids $M(\mathcal J)$, which in particular will provide a
proof of the refinement property for them.

We will use here \cite[Proposition 6.1]{Aposet} and the order-theoretic method behind the proof
of \cite[Proposition 6.5]{Aposet}. The monoid content of \cite[Proposition 6.5]{Aposet} 
needs to be adapted in order to be applied to our situation. We proceed to do that adaptation, 
in various steps.\vspace{.2truecm}

Let us recall from \cite[Section 4]{Aposet} the definition of a
crowned pushout.

\begin{defi}
\label{def:crownedpushout}
{\rm  Let $P$ be a conical monoid.  Suppose
that $P$ contains order-ideals $I$ and $I'$, with $I\cap I'=0$, such
that there is an isomorphism $\varphi \colon I\to I'$. We have a
diagram
$$\begin{CD}
I @>=>> I\\
   @V\varphi VV  @VV\iota_1 V\\
I' @>\iota_2>> P
\end{CD}
$$
which is not commutative.

The {\it crowned pushout} $Q$ of $(P,I,I',\varphi)$ is the coequalizer of the maps $\iota_1\colon I\to P$ and $\iota
_2\circ \varphi\colon I\to P$, so that there is a map $f\colon P\to
Q$ with $f(\iota _1(x))=f(\iota _2(\varphi (x)))$ for all $x\in I$
and given any other map $g\colon P\to Q'$ such that $g(\iota
_1(x))=g(\iota _2(\varphi (x)))$ for all $x\in I$, we have that $g$
factors uniquely through $f$.}
\end{defi}

\begin{prop}[{\cite[Proposition 4.2]{Aposet}}]
\label{crownpushmon} Let $P$ be a conical refinement monoid. Suppose
that $P$ contains order-ideals $I$ and $I'$, with $I\cap I'=0$, such
that there is an isomorphism $\varphi \colon I\to I'$. Let $Q$ be
the crowned pushout of $(P,I,I', \varphi)$. Then, $Q$ is the monoid
$P/\sim$ where $\sim$ is the congruence on $P$ generated by $x+i\sim
x+\varphi (i)$ for $i\in I$ and $x\in P$. Moreover $Q$ is a conical
refinement monoid, and $Q$ contains an order-ideal $Z$, isomorphic
with $I$, such that the projection map $\pi \colon P\to Q$ induces
an isomorphism $P/(I+I')\cong Q/Z$.
\end{prop}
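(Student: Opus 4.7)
My plan is to structure the proof in four steps. The identification $Q=P/\sim$ in the first assertion is essentially formal: the coequalizer of $\iota_1$ and $\iota_2\circ\varphi$ in the category of commutative monoids is the quotient by the smallest congruence identifying $i$ with $\varphi(i)$ for $i\in I$, and by translation-invariance of congruences on monoids this coincides with the congruence $\sim$ described in the statement.

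Before addressing conicality and refinement, I would establish two technical facts. The first is a usable normal form for the congruence: for $x,y\in P$, $x\sim y$ holds if and only if there exist $a,b\in I$ such that $x+a+\varphi(b)=y+b+\varphi(a)$ in $P$. Reflexivity, symmetry, translation-invariance, and containment of the generating pairs are immediate; transitivity of this binary relation would be verified by pasting two witnessing equalities and applying refinement in $P$ to absorb the intermediate term. Once this binary relation is seen to be a congruence containing the generators of $\sim$ and to be contained in every such congruence, it equals $\sim$. The second technical fact is that the order-ideal $J:=I+I'$ is isomorphic to $I\oplus I'$ as a monoid: applying refinement to an equality $i_1+i'_1=i_2+i'_2$ in $J$, together with $I\cap I'=\{0\}$ and the order-ideal property of $I$ and $I'$, forces the cross terms of the refinement matrix to vanish.

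For conicality, if $\ol{x}+\ol{y}=0$ in $Q$ then the normal form yields $x+y+a+\varphi(b)=b+\varphi(a)\in J$; since $J$ is an order-ideal, $x,y\in J$, and inside $J\cong I\oplus I'$ the restriction of $\sim$ is exactly the diagonal congruence identifying $I$ with $I'$ via $\varphi$, whose quotient is $I$, which is conical. For the refinement property, given $\ol{a}+\ol{b}=\ol{c}+\ol{d}$ in $Q$, I would use the normal form to lift to an identity $a+b+u+\varphi(v)=c+d+v+\varphi(u)$ in $P$, apply refinement in $P$ to this augmented equality, and then reabsorb the auxiliary summands $u,v,\varphi(u),\varphi(v)$ into the four target entries by pushing them inside the order-ideals along the congruence. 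Projecting to $Q$ then yields the required $2\times 2$ refinement. This step is the most delicate and is the main expected obstacle: monoid quotients do not generally preserve refinement, so the argument must exploit both the order-ideal property of $I, I'$ and the decomposition $J\cong I\oplus I'$ very carefully to ensure all auxiliary entries can be discarded cleanly.

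Finally, I would set $Z:=\pi(I)=\pi(I')$ in $Q$. The normal form restricted to $I$ shows that $\pi|_I$ is injective, giving an isomorphism $I\cong Z$. That $Z$ is an order-ideal follows again from the normal form: if $\pi(y)\leq\pi(i)$ with $i\in I$, then $y+z\sim i$ for some $z\in P$, and the normal form forces $y\in J$ modulo $\sim$, hence $\ol{y}\in Z$. The composition $P\to Q\to Q/Z$ kills $I+I'$, so it factors through $P/(I+I')$; the induced surjection $P/(I+I')\to Q/Z$ is injective because the only extra relations introduced are $i\sim\varphi(i)$, which already hold modulo $I+I'$. This delivers the isomorphism $P/(I+I')\cong Q/Z$ and completes the proof.
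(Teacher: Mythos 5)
The central technical claim of your proposal --- that $x\sim y$ if and only if there exist $a,b\in I$ with $x+a+\varphi(b)=y+b+\varphi(a)$ --- is false, and since every subsequent step (conicality, refinement, injectivity of $\pi|_I$, the identification of $\sim$ restricted to $J$) is routed through this normal form, the proof does not go through as written. Your relation, call it $R$, is indeed reflexive, symmetric, transitive, translation-invariant and contains the generating pairs (transitivity is even immediate by adding the two witnessing equations; no refinement is needed), so $\sim\,\subseteq R$. But the reverse inclusion fails whenever $I$ is not cancellative. Concretely, let $N=\Z^+\cup\{\infty\}$ (a conical refinement monoid), $P=N\times N$, $I=N\times\{0\}$, $I'=\{0\}\times N$, and $\varphi$ the obvious isomorphism. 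The fold map $\mu(m,n)=m+n$ equalizes $\iota_1$ and $\iota_2\circ\varphi$, hence factors through $Q$; since $\mu(1,0)=1\ne 2=\mu(2,0)$ we get $(1,0)\not\sim(2,0)$. Yet $(1,0)\,R\,(2,0)$, taking $a=b=(\infty,0)$, because $1+\infty=2+\infty$ in $N$. The same example shows the normal form is internally inconsistent with what you want to prove: restricted to $I$ it reads ``$x+a=y+a$ for some $a\in I$'', which would make $\pi|_I$ non-injective and destroy the conclusion $Z\cong I$.

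The correct description of $\sim$ is: $x\sim y$ if and only if there exist $z\in P$ and $e,f\in J:=I+I'$ with $x=z+e$, $y=z+f$ and $\mu(e)=\mu(f)$, where $\mu\colon J\cong I\oplus I'\to I$ is the fold $i+\varphi(j)\mapsto i+j$ (your observation that $J\cong I\oplus I'$ is correct and is exactly what makes $\mu$ well defined). That this relation is a congruence requires refinement in $P$ for transitivity, and it is contained in $\sim$ because $e\sim\mu(e)$ for every $e\in J$. From this description one shows that $\sim$ is a \emph{refining} congruence (if $x\sim y+z$ then $x=x_1+x_2$ with $x_1\sim y$ and $x_2\sim z$), which is precisely the fact recorded after the statement above and from which the refinement of $Q$ follows at once; the remaining assertions about $Z=\pi(I)$ and $P/(I+I')\cong Q/Z$ then go through along the lines you sketch. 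So the architecture of your argument is reasonable, but the load-bearing lemma must be replaced; as stated it is refuted by the example above.
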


It was observed in the proof of \cite[Proposition 4.2]{Aposet} that
the equivalence relation $\sim $ on $P$ is {\it refining}, that is,
if $x\sim y+z$ then there is a decomposition $x=x_1+x_2$ such that
$x_1\sim y $ and $x_2\sim z$. In the terminology of \cite{Dobb84},
this means that the quotient map $\pi \colon P \to Q$ is a
V-homomorphism. The refinement of $Q$ follows from this fact.\vspace{.3truecm}

\begin{point}\label{pt:KeyPoint}
{\rm Let $I$ be a finite poset, let $\mathcal J= (I, \le , G_i, \varphi _{ji} (i<j))$ be an $I$-system, 
and let $i$ be a maximal element of $I$. Since $I\downarrow i$ is a finite poset, we can take the $(I\dnw i)$-system
$$\mathcal J_i=(I\downarrow i, \leq , (G_j)_{j\in I\downarrow i}, \varphi_{jk} (k<j, j\in I\downarrow i))$$ 
obtained by restricting $\mathcal J$ to $I\dnw i$.
Since $i$ is the greatest element for the poset $I\dnw i $, we can use the
construction in \cite[Proposition 6.1]{Aposet} to obtain a poset
$\mathbb F (i)$ and a surjective order-preserving map $\psi\colon
\mathbb F (i)\to  I\dnw i$ satisfying the following properties:
\begin{enumerate}
\item  The map $\psi $ preserves
chains, that is, if $S$ is a chain in $\mathbb F (i)$ then $\psi $
restricts to a bijection from $S$ to $\psi (S)$. 
\item The map $S\mapsto \psi (S)$ is a bijection from the
set of maximal chains of $\mathbb F(i)$ onto the set $\mathcal
S^0(i)$ of maximal chains of $I\dnw i$.
\item  For every $t\in \mathbb F(i)$, the interval $[t,i]$ is a chain \cite[Proposition 6.1]{Aposet}.
\item  For $t_1,t_2\in \mathbb F(i)$, if $\psi ([t_1,i])=\psi
([t_2,i])$ then $t_1=t_2$. (This follows directly from the construction of $\mathbb F(i)$). 
\item For every $q\in \mathbb F(i)$, the map $\psi$ induces a bijection
from $\rL(\mathbb F(i), q)$ onto $\rL(I\dnw i,\psi (q))$ \cite[Lemma 6.4]{Aposet}.
\end{enumerate}}
\end{point}

\begin{defi}
 \label{def:comp-ideals}
{\rm   Let $I$ be a finite poset and let $\mathcal J = (I, \le , G_i, \varphi _{ji} (i<j))$ be an $I$-system.
  A {\it $\mathcal J$-compatible pair} of $I$ is a pair of lower subsets  $I_1$ and $I_2$  of $I$
  such that $I_1\cap I_2=\emptyset $, and such that there is an isomorphism 
of posets $\psi \colon I_1\to I_2$ satisfying the following conditions:
\begin{enumerate}
 \item $\psi ((I_1)_{{\rm reg}}) = (I_2)_{{\rm reg}}$ and $\psi ((I_1)_{{\rm free}})= (I_2)_{{\rm free}}$.
 \item $G_i=G_{\psi (i)}$ for all $i\in I_1$.
 \item $\varphi_{ji} =\varphi _{\psi (j)\psi (i)}$ if $i<j$ and $i,j\in I_1$.
 \item $\varphi _{ji} = \varphi _{j\psi (i)}$ if $i\in I_1$, $j\in I\setminus (I_1\cup I_2)$, $i<j$, and $\psi (i) < j$.
 \end{enumerate}}
\end{defi}

We have the following easy fact.

\begin{lem}\label{lem:CompaIdeals}
Let $(I_1,I_2)$ be a $\mathcal J$-compatible pair of lower subsets, and for $t=1,2$, let $\mathcal J_t$ be the $I_t$-system obtained by restricting the $I$-system $\mathcal J$ to $I_t$.  Then, $M(\mathcal J_t )$ are order-ideals of $M(\mathcal J)$  for $t=1,2$, with
$M(\mathcal J_1)\cap M(\mathcal J _2) = 0$, and there is a monoid isomorphism
$\psi \colon M(\mathcal J _1)\to M(\mathcal J _2)$ sending $\chi_i(x)$ to $\chi_{\psi (i)}(x) $ for all $i\in I_1$ and
all $x\in M_i$.
\end{lem}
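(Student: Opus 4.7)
The plan is to deduce the entire statement from the machinery already in place, with essentially no new computation.

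First, since $I_1$ and $I_2$ are lower subsets of $I$, Proposition~\ref{prop:characideals} directly identifies each $M(\mathcal J_t)$ with an order-ideal of $M(\mathcal J)$, which gives the first assertion. For the triviality of the intersection, the same proposition provides a lattice isomorphism $\mathcal L(I)\cong \mathcal L(M(\mathcal J))$ sending $J$ to $M(\mathcal J_J)$; under this isomorphism the meet $M(\mathcal J_1)\cap M(\mathcal J_2)$ corresponds to the meet $I_1\cap I_2=\emptyset$ in $\mathcal L(I)$, whence $M(\mathcal J_1)\cap M(\mathcal J_2)=M(\mathcal J_{\emptyset})=\{0\}$.

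For the isomorphism, I would exhibit it as the map $M(f)$ induced by a homomorphism of systems $f\colon \mathcal J_1\to \mathcal J_2$ in the sense of Definition~\ref{def:Funct1}. The underlying map of posets is the isomorphism $\psi\colon I_1\to I_2$ itself, which is automatically chain-injective and, by condition~(1) of Definition~\ref{def:comp-ideals}, respects the partition into free and regular elements. As group homomorphisms we take $f_i=\mathrm{id}_{G_i}\colon G_i\to G_{\psi(i)}$, which makes sense by condition~(2). The commutativity of the square in Definition~\ref{def:Funct1} for $i<j$ in $I_1$ reduces precisely to the identity $\varphi_{ji}=\varphi_{\psi(j)\psi(i)}$, which is exactly condition~(3). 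Lemma~\ref{lem:Funct2} then delivers a monoid homomorphism $M(f)\colon M(\mathcal J_1)\to M(\mathcal J_2)$ carrying $\chi_i(x)$ to $\chi_{\psi(i)}(x)$. A symmetric application of the same argument to the poset isomorphism $\psi^{-1}\colon I_2\to I_1$ yields a two-sided inverse on generators, so by the presentation in Corollary~\ref{cor:presentationofM} the two maps are mutually inverse monoid isomorphisms.

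I expect no real obstacle here, since almost all the content is absorbed by Proposition~\ref{prop:characideals} and the functoriality package of Definition~\ref{def:Funct1} and Lemma~\ref{lem:Funct2}. Note that condition~(4) of Definition~\ref{def:comp-ideals} plays no role in this lemma; it encodes the interaction of the pair $(I_1,I_2)$ with its complement in $I$, and will presumably be needed only when this pair is used to perform a crowned pushout inside $M(\mathcal J)$ in the sense of Proposition~\ref{crownpushmon}.
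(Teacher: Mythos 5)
Your proposal is correct and follows essentially the same route as the paper: Proposition \ref{prop:characideals} for the order-ideal and intersection claims, and the functoriality of Definition \ref{def:Funct1} and Lemma \ref{lem:Funct2} applied to the poset isomorphism $\psi$ (with identity maps on the groups) for the isomorphism. You merely spell out the verification of the commuting square and the inverse in more detail than the paper does, and your observation that condition~(4) of Definition \ref{def:comp-ideals} is not needed here is accurate.
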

\begin{proof}
By Proposition \ref{prop:characideals}, $M(\mathcal J _t)$ are order-ideals of $M(\mathcal J)$ and, since $I_1\cap I_2=\emptyset$, we have that $M(\mathcal J_1)\cap M(\mathcal J_2)=\{ 0\}$. By Lemma \ref{lem:Funct2}, the poset isomorphism $\psi \colon I_1\to I_2$ induces a monoid isomorphism $\psi \colon M(\mathcal J _1)\to M(\mathcal J _2)$ with the desired properties.
\end{proof}

Using a compatible pair of $I$, we can construct a new system, as follows.

\begin{defi}\label{def:crownsyst}
{\rm Let $I$ be a finite poset, let $\mathcal J = (I, \le , G_i, \varphi _{ji} (i<j))$ be an $I$-system, 
 and let $(I_1,I_2)$ be a $\mathcal J $-compatible pair of lower subsets of $I$. We define $\mathcal  J ' = (I', \le ', G_i', \varphi '_{ji} (i<j) )$, where:
\begin{enumerate}
\item $I'=I\setminus I_2$.
\item $\le '$ is the order relation obtained 
by setting $i \le ' j$ if and only if either $i\le j$ in $I$ or $i\in I_1$, $j\in I\setminus (I_1\cup I_2)$, and $\psi (i) <j$ in $I$.
\item  $G_i'= G_i$ for $i\in I'$.
\item  For $i<' j$ in $I'$, we have $\varphi '_{ji}=\varphi _{ji}$ if $i< j$ in $I$, and $\varphi _{ji}'= \varphi _{j\psi (i)}$
if $i\in I_1$, $j\in I\setminus (I_1\cup I_2)$ and $\psi (i) <j$ in $I$.
\end{enumerate}}
\end{defi}

\begin{lem}\label{lem:crownsyst}
Let $I$ be a finite poset, let $\mathcal J = (I, \le , G_i, \varphi _{ji} (i<j))$ be an $I$-system, 
 and let $(I_1,I_2)$ be a $\mathcal J $-compatible pair of lower subsets of $I$. Then, $\mathcal  J ' = (I', \le ', G_i', \varphi '_{ji} (i<j) )$ is an $I'$-system.
\end{lem}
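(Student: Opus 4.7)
The plan is to verify the four defining conditions of Definition \ref{def:I-system} in turn, noting that most items are inherited directly from $\mathcal{J}$. The groups $G_i' = G_i$, together with the partition $I' = I'_{{\rm free}} \sqcup I'_{{\rm reg}}$ obtained by intersecting the partition of $I$ with $I'$, are inherited at once. Each map $\varphi'_{ji}$ is a semigroup homomorphism $M_i \to G_j$: when $i<j$ in $I$ this is immediate, and in the new case ($i\in I_1$, $j\in I\setminus(I_1\cup I_2)$, $\psi(i)<j$) we use that $M_i = M_{\psi(i)}$ --- which follows from conditions (1) and (2) of Definition \ref{def:comp-ideals} --- so that $\varphi_{j\psi(i)}$ may equally be read as a homomorphism $M_i \to G_j$. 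The three nontrivial tasks are showing that $\le'$ is a partial order, verifying functoriality (c1), and verifying surjectivity (c2).

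For the partial order structure, reflexivity is clear. Antisymmetry holds because the new relation forces $i\in I_1$ and $j\in I\setminus(I_1\cup I_2)$, which are disjoint; so neither can $i=j$ arise, nor can both $i\le' j$ and $j\le' i$ hold. Transitivity of $i\le' k\le' j$ splits into four cases according to whether each step is old (inside $I$) or new. The case of two new steps is vacuous, since $k$ would have to lie simultaneously in $I_1$ and in $I\setminus(I_1\cup I_2)$. In the mixed cases, the key facts are that $I_1$ is a lower subset (so $i<k$ in $I$ with $k\in I_1$ forces $i\in I_1$), that $k\in I\setminus(I_1\cup I_2)$ with $k\le j$ in $I$ forces $j\in I\setminus(I_1\cup I_2)$, and that $\psi$ is order-preserving; stringing these together yields the needed relation $i\le' j$.

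Condition (c1) is handled by the same case analysis: every composite $\widehat{\varphi}'_{jk}\circ\widehat{\varphi}'_{ki}$ reduces to a composite of the original maps along a genuine chain in $I$. The key reduction is in the case $i<k$ old and $k<'j$ new, where condition (3) of Definition \ref{def:comp-ideals} gives $\varphi_{ki} = \varphi_{\psi(k)\psi(i)}$, converting the composite into one along the chain $\psi(i)<\psi(k)<j$ in $I$; functoriality of $\mathcal{J}$ then closes the argument. For condition (c2), fix $i\in I'_{{\rm free}}$. If $i\in I_1$, then $I_1$ being a lower subset yields $\{k : k<'i\} = \{k<i\}\subseteq I'$ with the same homomorphisms as in $\mathcal{J}$, so surjectivity is inherited. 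If instead $i\in I\setminus(I_1\cup I_2)$, then every $k\in I_2$ with $k<i$ is \emph{replaced} by $\psi^{-1}(k)\in I_1$, which satisfies $\psi^{-1}(k)<'i$ via the new route with $\varphi'_{i\psi^{-1}(k)} = \varphi_{ik}$ by Definition \ref{def:crownsyst}(4); accordingly the image of $\bigoplus_{k<'i}\varphi'_{ik}$ contains the full image of $\bigoplus_{k<i}\varphi_{ik}$, which is $G_i$. The only real obstacle is the bookkeeping of the case analyses; no new structural idea is required.
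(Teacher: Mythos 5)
Your proof is correct and follows essentially the same route as the paper's: a case analysis on old versus new relations, using conditions (3) and (4) of Definition \ref{def:comp-ideals} to reduce (c1) to functoriality of $\mathcal J$ along genuine chains in $I$, and replacing predecessors lying in $I_2$ by their $\psi$-preimages in $I_1$ to get (c2). The one point the paper states that you should add explicitly is that when $i<j$ in $I$ \emph{and} $\psi(i)<j$ both hold, the two formulas for $\varphi'_{ji}$ in Definition \ref{def:crownsyst}(4) agree precisely by condition (4) of Definition \ref{def:comp-ideals}, so $\varphi'_{ji}$ is well defined.
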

\begin{proof}
Note that condition (4) in Definition \ref{def:comp-ideals} says that $\varphi '_{ji}$ is well-defined for $i<'j$. 

To show condition (c1) in Definition \ref{def:I-system}, take $i,j,k $ in $I'$ such that $i<'j<'k$. We have to show that
$\widehat{\varphi}'_{ki} = \widehat{\varphi}'_{kj}\widehat{\varphi}_{ji}'$. 

If $i<j<k$ in $I$, then the condition follows from the corresponding condition for $\mathcal J$. If $i,j\in I_1$, $k\in I\setminus (I_1\cup I_2)$, 
and $\psi (j) <k$, then $\psi (i)< \psi (j)$ because $\psi$ is order-preserving, and thus $\psi (i) <k$. Therefore,
$$\widehat{\varphi}'_{ki}= \widehat{\varphi} _{k \psi (i)} = \widehat{\varphi} _{k\psi (j)} \widehat{\varphi}_{\psi (j)\psi (i)}= \widehat{\varphi}'_{kj}\widehat{\varphi}_{ji}=\widehat{\varphi}'_{kj}\widehat{\varphi}_{ji}' ,$$
where we have used condition (3) in Definition \ref{def:comp-ideals} for the third equality. A similar proof applies in the case where 
$i\in I_1$, $j\in I\setminus (I_1\cup I_2)$ and $\psi (i) <j$.

We show now condition (c2) in Definition \ref{def:I-system}. For this observe that, given $i\in I'$, we have
$$\bigoplus_{\{ k\in I': k<'i\} } \varphi_{ik}' =\Big( \bigoplus _{\{ k\in I': k<i \}}\varphi_{ki}\Big) \oplus \Big(\bigoplus_{\{ k\in I_1: \psi (k)<i \,\text{and}\,  k\nleq i\}} \varphi_{i \psi (k)} \Big) .$$
Since $\varphi _{i,\psi (k)}=\varphi _{ik}$ for all  $k\in I_1$ such that  $k<i$ and $\psi (k) <i$, we obtain that 
$$ \bigoplus_{\{ k\in I': k<'i\} } \varphi_{ik}'\colon \Big(\bigoplus _{\{ k\in I': k<' i \}} M_{ik}'\Big) \longrightarrow G'_i $$
is surjective.
\end{proof}

The following result plays a central role in the proof of the main result of this section.

\begin{lem}
 \label{lem:pushouting-I-systems}
 Let $I$ be a finite poset, let $\mathcal J = (I, \le , G_i, \varphi _{ji} (i<j))$ be an $I$-system, 
 and let $(I_1,I_2)$ be a $\mathcal J $-compatible pair of lower subsets of $I$. 
 Then, the crowned pushout of $M(\mathcal J )$ with respect to $\psi \colon M(\mathcal J _1) \to M(\mathcal J _2)$ is the
monoid $M(\mathcal J ')$, where $\mathcal  J ' $ is the system introduced in Definition \ref{def:crownsyst}.
\end{lem}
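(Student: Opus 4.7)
The plan relies on Proposition \ref{crownpushmon}: the crowned pushout $Q$ is the quotient $M(\mathcal J)/{\sim}$, where $\sim$ is the smallest congruence on $M(\mathcal J)$ identifying $\chi_i(y)$ with $\chi_{\psi(i)}(y)$ for every $i\in I_1$ and $y\in M_i$ (by Lemma \ref{lem:CompaIdeals}, these elements generate $M(\mathcal J_1)$ and their $\psi$-images generate $M(\mathcal J_2)$). The strategy is to exhibit mutually inverse monoid homomorphisms between $Q$ and $M(\mathcal J')$, and thereby identify the two.

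In one direction, using the presentation from Corollary \ref{cor:presentationofM}, I would define $\Psi\colon M(\mathcal J')\to Q$ on generators by $\Psi(\chi_i(x))=[\chi_i(x)]$ for $i\in I'=I\setminus I_2$ and $x\in M_i$. The defining relation $\chi_j(x)+\chi_i(y)=\chi_j(x+\varphi'_{ji}(y))$ for $i<' j$ in $I'$ must be checked in two subcases: if $i<j$ holds in $I$, then $\varphi'_{ji}=\varphi_{ji}$ and the identity is already valid in $M(\mathcal J)$; otherwise $i\in I_1$, $j\in I\setminus(I_1\cup I_2)$, and $\psi(i)<j$ in $I$ with $\varphi'_{ji}=\varphi_{j\psi(i)}$, so that the relation $\chi_j(x)+\chi_{\psi(i)}(y)=\chi_j(x+\varphi_{j\psi(i)}(y))$ in $M(\mathcal J)$ becomes the desired one after replacing $\chi_{\psi(i)}(y)$ by $\chi_i(y)$ modulo $\sim$.

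In the reverse direction, I would define $\Phi\colon M(\mathcal J)\to M(\mathcal J')$ on generators by $\Phi(\chi_i(x))=\chi_i(x)$ when $i\in I\setminus I_2$ and $\Phi(\chi_i(x))=\chi_{\psi^{-1}(i)}(x)$ when $i\in I_2$. Well-definedness is established by checking the relation $\chi_j(x)+\chi_i(y)=\chi_j(x+\varphi_{ji}(y))$ for $i<j$ in $I$ via a four-way case split: the subcase $i\notin I_2$, $j\in I_2$ is vacuous because $I_2$ is a lower subset; when $i,j$ both lie in $I_2$ (resp.\ both lie in $I\setminus I_2$), conditions (1)--(3) of Definition \ref{def:comp-ideals} together with the definition of $\varphi'$ yield the required identity in $M(\mathcal J')$; the remaining case $i\in I_2$, $j\in I\setminus I_2$ (which forces $j\in I\setminus(I_1\cup I_2)$ since $I_1$ is a lower subset) uses condition (4) of Definition \ref{def:comp-ideals} combined with the second clause of $\le'$ in Definition \ref{def:crownsyst} to recognize $\psi^{-1}(i)<' j$ and to identify $\varphi'_{j\psi^{-1}(i)}$ with $\varphi_{ji}$. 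Since $\Phi(\chi_i(y))=\chi_i(y)=\Phi(\chi_{\psi(i)}(y))$ for every $i\in I_1$, the map $\Phi$ kills the generators of $\sim$, hence descends to $\bar\Phi\colon Q\to M(\mathcal J')$. Inspection on generators then gives $\bar\Phi\circ\Psi=\id_{M(\mathcal J')}$ and $\Psi\circ\bar\Phi=\id_Q$, the latter using $[\chi_{\psi^{-1}(i)}(x)]=[\chi_i(x)]$ in $Q$ for $i\in I_2$. The principal technical obstacle is precisely the case analysis for $\Phi$, where each occurrence of $\varphi_{ji}$ in $M(\mathcal J)$ must be matched with the appropriate $\varphi'$ in $\mathcal J'$ by invoking conditions (3) and (4) of Definition \ref{def:comp-ideals}.
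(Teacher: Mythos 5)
Your proposal is correct and follows essentially the same route as the paper: the paper defines the surjection $\pi\colon M(\mathcal J)\to M(\mathcal J')$ (your $\Phi$), uses the coequalizer property to factor it through $Q$, and then builds the section $\rho$ (your $\Psi$) via the presentation of Corollary \ref{cor:presentationofM}, with the same two-case check $i<j$ in $I$ versus $i\in I_1$, $j\in I\setminus(I_1\cup I_2)$, $\psi(i)<j$. The only difference is cosmetic — you invoke the explicit quotient description of $Q$ from Proposition \ref{crownpushmon} instead of the universal property, and you spell out the four-way case analysis for the well-definedness of $\Phi$, which the paper asserts without detail.
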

\begin{proof}
Let $Q$ be the crowned pushout of $M(\mathcal J )$ with respect to $\psi \colon M(\mathcal J _1) \to M(\mathcal J _2)$, and let $f\colon M(\mathcal J)\to Q$ be the canonical homomorphism.

There is a surjective homomorphism $\pi \colon M(\mathcal J) \to M(\mathcal J ')$  which sends $\chi_i (x)$ to $\chi_i (x)$ if $i\in I\setminus I_2$ and
$x\in M_i$, and sends 
$\chi_{\psi (i)} (y)$ to $\chi_i (y)$ if $i\in I_1$ and $y\in M_{\psi (i)}= M_i$. Obviously this homomorphism equalizes $\iota _1$ and
$\iota _2 \circ \psi $, so there is a unique homomorphism $\ol{\pi }\colon Q\to M(\mathcal J ')$ such that $\pi = \ol{\pi} \circ f$.  
Note that $\ol{\pi}$ is surjective.

To show that $\ol{\pi}$ is an isomorphism, we only need to build a homomorphism $\rho \colon M(\mathcal J ') \to Q$ such that $\rho \circ \ol{\pi}   = \text{Id}_{Q}$.
We define $\rho (\chi_i (x)) = f(\chi_i (x))$ for $i\in I'$ and $x\in M_i'=M_i$. We have to check that $\rho $ is well-defined. By Corollary \ref{cor:presentationofM},
it suffices to check that 
$$f (\chi_j (x)) + f( \chi_i (y)) = f(\chi_j ( x+ \varphi '_{ji} (y))) $$
for $i,j\in I'$ with $i<' j$, $x\in M_j'$ and $y\in M_i'$. 

Assume first that $i<j$ in $I$. Then,
\begin{equation*}
f   (\chi_j (x))  + f( \chi_i (y))  = f (\chi_j (x) + \chi_i (y)) 
 = f(\chi_j (x+ \varphi _{ji} (y)))   = f (\chi_j (x+ \varphi '_{ji} (y)) .
\end{equation*}
Suppose now that $i\in I_1$, $j\in I\setminus (I_1\cup I_2)$ and $\psi (i) <j$. Note that, since $f$ equalizes $\iota_1$ and $\iota_2\circ \psi$, we have
$$f(\chi_i(y)) = f(\iota_1(\chi_i(y)))=f( \iota_2\psi(\chi_i(y)))= f(\chi_{\psi (i)}(y)).$$
Hence,
\begin{align*}
& f (\chi_j (x))  + f( \chi_i (y))  = f(\chi_j (x))  + f( \chi_{\psi (i)} (y)) 
 = f (\chi_j (x) + \chi_{\psi (i)} (y)) \\
 &= f(\chi_j (x+ \varphi _{j\psi (i)} (y))) 
 = f (\chi_j (x+ \varphi '_{ji} (y))) .
\end{align*}
It is clear that  $\rho \circ \ol{\pi}   = \text{Id}_{Q}$. This concludes the proof. 
\end{proof}

The proof of the following lemma is contained in the proof of \cite[Proposition 6.5]{Aposet}.

\begin{lem}\label{lem:6.5for Posets}
Let $I$ be a finite poset and let $k\in \Ma (I)$. Then, there exist a positive integer $n$, a family $(I^t)_{0\leq t\leq n}$ of 
finite posets, and a family $\psi_t: I^t\rightarrow I^{t+1}$ of surjective order-preserving maps such that, if we denote $\Psi_{t}=\psi_{n-1}\circ \cdots \circ \psi_t$ ($0\leq t\leq n-1$), then:

\noindent {\rm (1)} $I^0=\mathbb{F}(k)$, $I^n=I\downarrow k$, and $\Psi_0$ is the map $\psi:\mathbb{F}(k)\rightarrow I\downarrow k$ in {\bf \ref{pt:KeyPoint}}.

\noindent {\rm (2)} For $t=0,1,\dots ,n-1$, there exist disjoint lower subsets $I_1^t$ and $I_2^t$ of $I^t$ such that:
\begin{enumerate}
\item[(a)] ${\Psi _{t}}|_{I_s^t}: I_s^t\rightarrow \Psi _{t} (I_s^t)$ is an isomorphism for $s= 1,2$. 
\item[(b)] $\Psi _{t}(I_1^t) =\Psi_{t} (I_2^t)$, whence one obtains an isomorphism of posets $\tau_t\colon I_1^t\to I_2^t$ by
  setting $\tau_t = (\Psi_t|_{I_2^t})^{-1}\circ (\Psi_t|_{I_1^t})$. 
  \item[(c)] $\Psi _t$ factors as $\Psi _t = \Psi _{t+1}\circ \psi_t$, where   
  $I^{t+1}= I^t\setminus I_2^t$, with the order in $I^{t+1}$ defined by $i<' j$ if either $i<j$ in $I^t$ or 
  $i\in I_1^t$, $j\in I^t \setminus (I_1^t\cup I_2^t)$ and $\tau_t (i)< j$, the map $\psi _t\colon I^t \to I^{t+1}$
  is the natural identification map, and the map $\Psi_{t+1}\colon I^{t+1}\to I\dnw k$ is the restriction of $\Psi_t$ to $I^{t+1}$.
  \end{enumerate}
\end{lem}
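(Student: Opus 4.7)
The plan is to mirror the inductive order-theoretic argument of \cite[Proposition 6.5]{Aposet}, stripped of its monoid-theoretic content (which is handled separately in our framework by Lemma \ref{lem:pushouting-I-systems}). I will proceed by induction on a measure of non-injectivity of the map $\Psi_t$, for instance $|I^t| - |I\dnw k|$, noting that since all posets involved are finite and $\Psi_t$ is always surjective, this quantity is a non-negative integer that reaches $0$ exactly when $\Psi_t$ is a bijection.

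At $t=0$ we take $I^0 = \mathbb{F}(k)$ and $\Psi_0 = \psi$ as in \textbf{\ref{pt:KeyPoint}}, which is order-preserving, surjective and chain-preserving. At a general step, if $\Psi_t$ is already injective then it is a poset isomorphism and we stop, setting $n=t$ and identifying $I^n$ with $I\dnw k$. Otherwise, I select an element $y \in I\dnw k$ that is \emph{minimal} among those having at least two preimages under $\Psi_t$, and fix two such preimages $y_1 \ne y_2 \in I^t$. I then set $I_1^t := I^t\dnw y_1$ and $I_2^t := I^t\dnw y_2$. The key structural claim is that $I_1^t$ and $I_2^t$ are disjoint and that $\Psi_t$ restricts to poset isomorphisms $I_s^t \to (I\dnw k)\dnw y$ for $s=1,2$. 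Disjointness and isomorphism follow by induction on height, using the minimality of $y$ (any element $z < y$ in $I\dnw k$ has a unique preimage in $I^t$, so the preimages of the lower set of $y$ organize uniquely around $y_1$ and $y_2$), together with the chain-preserving property of $\Psi_t$ inherited from $\psi$. This is the step where I expect the main difficulty, since one must carefully track how the chain-bijection property of $\psi$ on $\mathbb{F}(k)$ propagates through the successive quotients; it requires checking that each $\Psi_{t+1}$ again preserves chains, so that the inductive hypothesis applies at the next step.

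Having obtained the compatible pair $(I_1^t,I_2^t)$ with the induced isomorphism $\tau_t = (\Psi_t|_{I_2^t})^{-1} \circ (\Psi_t|_{I_1^t})$, I define $I^{t+1} := I^t \setminus I_2^t$ endowed with the order described in (2)(c) (coinciding with the order underlying Definition \ref{def:crownsyst} applied to this pair), take $\psi_t \colon I^t \to I^{t+1}$ to be the identification sending $y_2$-descendants to their $\tau_t^{-1}$-images and fixing everything else, and define $\Psi_{t+1}$ to be the restriction of $\Psi_t$ to $I^{t+1}$. Then $\Psi_{t+1}$ is order-preserving and surjective onto $I\dnw k$ by construction, and $\Psi_t = \Psi_{t+1} \circ \psi_t$ by the choice of $\tau_t$. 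Since $|I^{t+1}| < |I^t|$, the process terminates after finitely many steps at $I^n = I\dnw k$ with $\Psi_n = \mathrm{id}$, yielding all the asserted properties (1) and (2).
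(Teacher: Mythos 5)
The paper itself defers this lemma to the proof of \cite[Proposition 6.5]{Aposet}, so the real question is whether your order-theoretic induction reproduces that argument. It does not: your selection rule (take $y$ \emph{minimal} among elements with at least two $\Psi_t$-preimages, and set $I_s^t = I^t\dnw y_s$ for two preimages $y_1\neq y_2$) breaks down, and the paper's own three-picture example already witnesses the failure. The problem is your ``key structural claim'' of disjointness. If $y$ is minimal among multi-preimage elements but not minimal in $I\dnw k$, then every $z<y$ has a \emph{unique} preimage $\hat z$ in $I^t$, and the lower-cover bijectivity that you must maintain (it is needed for Lemma \ref{lem:pullbacking-system}) forces $\hat z$ to lie below \emph{both} $y_1$ and $y_2$. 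Hence $I^t\dnw y_1\cap I^t\dnw y_2=\Psi_t^{-1}\bigl((I\dnw k)\dnw y\setminus\{y\}\bigr)\neq\emptyset$: minimality of $y$ produces exactly the opposite of disjointness. Concretely, in the displayed example run your procedure on $\mathbb F=I^0$: the minimal multi-preimage targets are the green and magenta elements, so you first merge $112\sim 121\sim 211$ into one element $g$ and $122\sim 212$ into one element $m$. At that stage the only remaining multi-preimage target is the blue element, with preimages $12$ and $21$, but now $\dnw 12=\{12,g,m\}$ and $\dnw 21=\{21,g,m\}$ overlap, no compatible pair of disjoint lower sets identifies them, and the process is stuck one step short of $I\dnw k$. (Your fallback of ``checking that $\Psi_{t+1}$ preserves chains'' does not help: chain-injectivity survives, but it is the forest property ``all up-sets are chains'' that guaranteed disjointness in $\mathbb F(k)$, and that property is destroyed by the very first identification.)

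The correct argument orders the identifications the other way around, so that the two lower sets being glued always lie in two still-disjoint branches of the original tree $\mathbb F(k)$. One inducts on $|I\dnw k|$: writing $c_1,\dots,c_r$ for the lower covers of $k$, the tree $\mathbb F(k)$ is $\{k\}$ sitting over the disjoint union of the trees $\mathbb F_{I\dnw c_j}(c_j)$; by induction each of these lower subsets collapses onto $I\dnw c_j$ by admissible steps (which remain admissible in the ambient poset because they take place inside pairwise disjoint lower sets), and one then glues the resulting disjoint copies of the $I\dnw c_j$ pairwise along the common lower sets $I\dnw c_i\cap I\dnw c_j$ --- note these are \emph{not} principal lower sets in general, which the statement of the lemma permits but your construction never produces. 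This is exactly the pattern the paper uses again at the end of the proof of Theorem \ref{theor:total-refinement} to glue $\bigsqcup_{k\in\Ma(I)}(I\dnw k)$ onto $I$, and in the example it is why $\dnw 12$ and $\dnw 21$ are identified \emph{before} the preimage $211$ of the green element is merged with the others, while they are still disjoint. So the gap is not a technical detail to be checked but the choice of the induction itself.
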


The following pictures illustrate the procedure described in Lemma \ref{lem:6.5for Posets} in a basic case:

$$\mathbb F = I^0  \xymatrix@!=0.1pc{ & & & & & & &  \ast \ar[ddll] \ar[ddrr]& & & & & & & \\
 & & & & & & & & & & & & & & \\
 & & &   & &1 \ar[ddll] \ar[ddr]& &  & &2  \ar[ddl] \ar[ddrr]& &   & & & \\
  & & & & & & & & & & & & & & \\
&  & & 11 \ar[ddll] \ar[ddr] & &  &12  \ar[ddl] \ar[dd] & & 21 \ar[dd] \ar[ddr] & & &  22 \ar[ddl] \ar[ddrr]& & &\\
  & & & & & & & & & & & & & & \\
  &111  & &  & \color{blue}{112} &\color{red}{121}  &122 &  & 211 &212  & 221 &  &  & 222 & 
}
$$
\vspace{1truecm}

$$I^1 \xymatrix@!=0.1pc{ & & & & & & &  \ast \ar[ddll] \ar[ddrr]& & & & & & & \\
 & & & & & & & & & & & & & & \\
 & & &   & &1 \ar[ddll] \ar[ddr]& &  & &2  \ar[ddl] \ar[ddrr]& &   & & & \\
  & & & & & & & & & & & & & & \\
&  & & 11 \ar[ddll] \ar[ddrr] & &  &\color{green}{12}  \ar[ddl] \ar @{.>}[dd] & & \color{magenta}{21} \ar[dd] \ar @{.>}[ddr] & & &  22 \ar[ddl] \ar[ddrr]& & &\\
  & & & & & & & & & & & & & & \\
  &111  & &  &  &\color{green}{\ast}  &\color{green}{122} &  & \color{magenta}{211} &\color{magenta}{212}  & 221 &  &  & 222 & 
}
$$
\vspace{1truecm}

$$I=I^2 \xymatrix@!=0.1pc{ & & & & & & &  \ast \ar[ddll] \ar[ddrr]& & & & & & & \\
 & & & & & & & & & & & & & & \\
 & & &   & &1 \ar[ddll] \ar[ddrr]& &  & &2  \ar[ddll] \ar[ddrr]& &   & & & \\
  & & & & & & & & & & & & & & \\
&  & & 11 \ar[ddll] \ar[ddrr] & &  && \color{blue}{\ast}   \ar[ddll] \ar[ddrr] &  & & &  22 \ar[ddl] \ar[ddrr]& & &\\
  & & & & & & & & & & & & & & \\
  &111  & &  &  &\color{green}{\ast}  & &  &  &\color{magenta}{\ast}  & 221 &  &  & 222 & 
}
$$

In these pictures, $I$ is a poset with a  maximum element, and $\mathbb F$ is the corresponding poset having the property that 
all the subsets $\mathbb F \uparrow p$ are chains. In this case, the process described in Lemma \ref{lem:6.5for Posets} 
enables us to pass from $\mathbb F$ to $I$ in two steps. In the first step, we identify the elements $112$ and $121$ to obtain the poset $I^1$.
In the second step, we identify $(I^1)\downarrow (12)$ with $(I^1)\downarrow (21)$ to get $I^2 =I$ from $I^1$.
 
 \medskip
 
Now, we are ready to prove the main result in this section.

\begin{theor}
\label{theor:total-refinement}
Let $I$ be a finite poset and let $\mathcal J = (I,\le , G_i, \varphi _{ji} (i<j) )$ be an $I$-system. Then, $M(\mathcal J)$ is a conical refinement monoid.  
\end{theor}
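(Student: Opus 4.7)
The conical property is immediate from Remark \ref{remark:primes}: $M(\mathcal J)$ is primely generated, so $x+y=0$ forces $x=y=0$. The bulk of the proof concerns the refinement property. My first move would be to reduce to the case that $I$ has a greatest element. Given an arbitrary finite poset $I$, I adjoin a new top element $\hat 1\in\hat I_{\mathrm{reg}}$ with $G_{\hat 1}=0$ and $\varphi_{\hat 1,i}\colon M_i\to 0$ the zero map for every $i\in I$, yielding a valid $\hat I$-system $\hat{\mathcal J}$ (conditions (c1)--(c2) of Definition \ref{def:I-system} are automatic). By Proposition \ref{prop:characideals}, $M(\mathcal J)$ is the order-ideal of $M(\hat{\mathcal J})$ corresponding to the lower subset $I\subseteq\hat I$, and since the refinement property descends to order-ideals, it suffices to prove the theorem for $\hat{\mathcal J}$, that is, under the extra assumption that $I$ has a greatest element $k$.

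Assume now $k$ is the greatest element of $I$. I would then execute three stages. First, apply \textbf{\ref{pt:KeyPoint}} to obtain the poset $\mathbb F(k)$ and the surjective order-preserving map $\psi\colon\mathbb F(k)\to I$ that is injective on chains and induces the lower-cover bijection; pull $\mathcal J$ back along $\psi$ via Lemma \ref{lem:pullbacking-system} to an $\mathbb F(k)$-system $\mathcal J^0$. Since $\mathbb F(k)\uparrow p$ is a chain for every $p$, Proposition \ref{prop:chins-up} yields refinement for $M(\mathcal J^0)$. Second, apply Lemma \ref{lem:6.5for Posets} to get a chain of posets $\mathbb F(k)=I^0,I^1,\dots,I^n=I$ together with surjections $\Psi_t\colon I^t\to I$, lower subsets $I_1^t,I_2^t\subseteq I^t$, and isomorphisms $\tau_t\colon I_1^t\to I_2^t$; define $\mathcal J^t$ as the $\Psi_t$-pullback of $\mathcal J$ (so $\mathcal J^n=\mathcal J$). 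Third, by induction on $t$ verify that (i) $(I_1^t,I_2^t)$ is a $\mathcal J^t$-compatible pair in the sense of Definition \ref{def:comp-ideals}, (ii) the system $\mathcal J^{t+1}$ built from $(\mathcal J^t,I_1^t,I_2^t)$ via Definition \ref{def:crownsyst} coincides with the $\Psi_{t+1}$-pullback of $\mathcal J$, and (iii) by Lemma \ref{lem:pushouting-I-systems}, $M(\mathcal J^{t+1})$ is the crowned pushout of $M(\mathcal J^t)$ along $\tau_t$, which by Proposition \ref{crownpushmon} preserves the refinement property. Iterating from $t=0$ to $t=n-1$ then yields the refinement property for $M(\mathcal J)$.

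The principal obstacle is the inductive verification of (i) and (ii). Conditions (1)--(3) of Definition \ref{def:comp-ideals} fall out automatically from the pullback construction together with the identity $\Psi_t\circ\tau_t=\Psi_t|_{I_1^t}$: we have $G^t_i=G_{\Psi_t(i)}=G^t_{\tau_t(i)}$ and the transfer maps coincide. Condition (4) reduces similarly to the same identification of maps. Identifying $\mathcal J^{t+1}$ with the $\Psi_{t+1}$-pullback requires checking that the enlarged order $<'$ on $I^{t+1}=I^t\setminus I_2^t$ matches the restriction under $\Psi_{t+1}$ of the order on $I$, and that the newly-introduced maps $\varphi^{t+1}_{ji}=\varphi^t_{j\tau_t(i)}$ equal the corresponding pullback values. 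This is essentially combinatorial bookkeeping, but it is precisely the point where the interplay between Lemma \ref{lem:6.5for Posets} and the pullback structure of the systems is exploited, so it must be handled with care.
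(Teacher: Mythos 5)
Your proof is correct, and its core --- pulling the system back along $\psi\colon\mathbb F(k)\to I\dnw k$ via Lemma \ref{lem:pullbacking-system}, invoking Proposition \ref{prop:chins-up} on $\mathbb F(k)$, and then descending through the chain of posets of Lemma \ref{lem:6.5for Posets} by crowned pushouts (Lemma \ref{lem:pushouting-I-systems} and Proposition \ref{crownpushmon}) --- is exactly the paper's argument, including the inductive verification that each $\Psi_t$ satisfies the lower-cover hypothesis of Lemma \ref{lem:pullbacking-system}. Where you genuinely diverge is in handling a poset with several maximal elements: you adjoin a regular top element $\hat 1$ with $G_{\hat 1}=0$ and zero transition maps (a legitimate $\hat I$-system, since (c2) is vacuous for a regular element and (c1) is trivial for maps into the zero group), and then use Proposition \ref{prop:characideals} plus the fact that refinement passes to order-ideals. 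The paper instead proves refinement for each $M(\mathcal J_k)$, $k\in\Ma(I)$, separately, and then glues: it starts from $\prod_{k\in\Ma(I)}M(\mathcal J_k)$, which is the monoid of the pullback along $\bigsqcup_k(I\dnw k)\to I$, and performs a second finite sequence of crowned pushouts identifying the overlaps $(I\dnw k_r)\cap(I\dnw k_s)$. Your top-element trick buys a shorter proof (one pass through the pushout machinery instead of two) at essentially no cost; the paper's gluing step is more explicit about how $M(\mathcal J)$ is assembled from the ideals $M(\mathcal J_k)$, which is information the top-element reduction discards. One small caveat: conicality does not follow from prime generation (groups show the implication fails in general); it follows here from the grading $M(\mathcal J)=\bigsqcup_{a\in A(I)}M_a$ with $M_{\es}=\{0\}$ and $M_a+M_b\subseteq M_{a\cup b}$, though the paper is equally terse on this point.
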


\begin{proof}
We will show that $M(\mathcal{J})$ enjoys the refinement property. 

For this, we will follow the process used in the proof of \cite[Proposition 6.5]{Aposet}, using Lemmas \ref{lem:pushouting-I-systems} and \ref{lem:6.5for Posets}. Given $k\in \Ma (I)$, let $\mathbb  F (k)$ 
be the poset constructed in \cite[Proposition 6.1]{Aposet} from $I\dnw k$. By {\bf \ref{pt:KeyPoint}}, there is a surjective order-preserving map 
$\psi \colon \mathbb F (k)\to I\dnw k$ satisfying the conditions stated in Lemma \ref{lem:pullbacking-system}. Therefore, there is an $\mathbb F (k)$-system 
$\mathcal J_{\mathbb F (k)}$ and a homomorphism of systems, also denoted by $\psi$, 
$$\psi \colon \mathcal J_{\mathbb F (k)} \longrightarrow \mathcal J _k,$$
where $\mathcal J _k$ is the $(I\dnw k)$-system obtained by restricting $\mathcal J $ to $I\dnw k$, which induces a 
surjective monoid homomorphism $M(\mathcal J_{\mathbb F (k)}) \to M(\mathcal J  _k )$. Note that $M(\mathcal J  _k )$ is an order-ideal of $M(\mathcal J)$ 
(see Proposition
\ref{prop:characideals}).
  
Take the sequence of posets $I ^t$ ($t=0,1, \dots ,n$) such that $I ^0  = \mathbb F (k)$ and $I ^n= I\dnw k$, and surjective maps 
$\psi _t\colon I^t \to I^{t+1}$, for $t=0,1,\dots , n-1$, such that $\psi = \psi_{n-1}\circ \cdots \circ \psi_1\circ \psi _0$, given by Lemma \ref{lem:6.5for Posets}. 
For each $t=0,\dots ,n-1$, the map $\Psi_{t}:I^t \rightarrow I\downarrow k$ satisfies the properties required in the statement of Lemma \ref{lem:pullbacking-system}; 
this follows by induction and Lemma \ref{lem:6.5for Posets}. Indeed, assuming the result is true for $\Psi_t$, it follows from Lemma \ref{lem:6.5for Posets}(2c) that $i<' j $
in $I^{t+1}$ implies that $\Psi_{t+1}(i)< \Psi _{t+1} (j)$ in $I \dnw k$. On the other hand, if $i\in I^{t+1} = I^t\setminus I^t_2$ then, by induction, $\Psi _t$ induces a bijection from 
$\rL(I^t, i)$ onto $\rL(I\dnw k,\Psi _t(i))$. Now, if $j\in \rL(I^t,i)\cap I_2^t$, then there is a unique $j'\in I^t_1$ such that $\tau _t(j')=j$. Then, we have $j'<' i$ in $I^{t+1}$ (see Lemma  \ref{lem:6.5for Posets}(2c)), and
it easily follows that $j'\in \rL (I^{t+1}, i)$. Note that $j'\notin \rL(I^t,i)$, because $\Psi _t(j')=\Psi_t (j)$ and $\Psi_t$ is injective on $\rL(I^t,i)$. It follows that
$$\rL(I^{t+1},i)= \Big( \rL(I^t,i)\setminus I_2^t\Big) \sqcup  \Big( \tau_t^{-1}(\rL(I^t,i)\cap I_2^t)\Big), $$
and that $\Psi_{t+1}$ induces a bijection from $\rL(I^{t+1},i)$ onto $\rL(I\dnw k, \Psi_{t+1}(i))=\rL(I\dnw k, \Psi_{t}(i))$.

Hence, by Lemma \ref{lem:pullbacking-system}, 
we can construct the $I^t$-system $\mathcal J^t$  by taking the pullback of the $I\dnw k$-system $\mathcal J _k$ through $\Psi_{t}$, for all $t=0,\dots ,n-1$. 
It is easily seen that the pair $(I_1^t,I_2^t)$ is a compatible 
$\mathcal J ^t$-system  with respect to the isomorphism $\tau_t\colon I_1^t\to I_2^t$ (see Definition \ref{def:comp-ideals}). 
  
Notice that $\mathcal J ^{t+1}$ coincides with the $I^{t+1}$-system $(\mathcal J^t)'$ described in Definition \ref{def:crownsyst} 
(using the $\mathcal J^t $-compatible pair $(I_1^t,I_2^t)$). Therefore, 
by Lemma \ref{lem:pushouting-I-systems}, $M(\mathcal J^{t+1})$ is a crowned pushout of $M(\mathcal J^t)$. Observe that $M(\mathcal J^n)=M(\mathcal J _k)$. 
  
So, $M(\mathcal J _k)$ can be obtained from $M(\mathcal J_{\mathbb F (k)})$ by a sequence of crowned pushouts. Since the poset $\mathbb F (k)$ satisfies the condition that $[p,k]$ is a chain for every $p\in \mathbb F (k)$
({\bf \ref{pt:KeyPoint}}), Proposition \ref{prop:chins-up} proves that $M(\mathcal J _{\mathbb F (k)})$  is a refinement monoid. 
Therefore, by Proposition \ref{crownpushmon}, $M(\mathcal J ^t)$ is a conical refinement monoid for all $t=0,1,\dots ,n$. In particular $M(\mathcal J_k)$ is a refinement monoid.
 
In order to extend this result to $I$, we will apply a similar strategy to the onto poset map $\Psi:\bigsqcup_{k\in \Ma (I)} (I\dnw k) \to I$. 
For this, we produce, by recurrence on $t$, a family $(I^t)_{0\leq t\leq s}$ of posets and a family $\psi_t: I^t\rightarrow I^{t+1}$ of onto poset maps, 
starting with  $I^0=\bigsqcup_{k\in \Ma (I)} (I\dnw k)$ and ending with $I^s=I$, satisfying the properties stated in Lemma \ref{lem:6.5for Posets}. Let us 
illustrate the procedure with the first step. For this, we enumerate $\Ma (I)=\{k_0, k_1, \dots ,k_s\}$. We define 
$I_1^0=(I\downarrow k_0)\cap (I\downarrow k_1)\subset (I\downarrow k_0)$ and $I_2^0=(I\downarrow k_0)\cap (I\downarrow k_1)\subset (I\downarrow k_1)$ (that is, we look at this intersection, first in the
disjoint copy of $I\downarrow k_0$ in $\bigsqcup_{k\in \Ma (I)} (I\dnw k)$, and then in the disjoint copy of $I\downarrow k_1$ in $\bigsqcup_{k\in \Ma (I)} (I\dnw k)$). 
Clearly, $I_1^0$ and $I_2^0$ are disjoint lower sets of $\bigsqcup_{i\in \Ma (I)} (I\dnw i)$, the map $\Psi$ restricted to $I_j^0$ ($j=1,2$) is injective, 
and $\Psi(I_1^0)=\Psi(I_2^0)$. So, $\tau_0  =(\Psi|_{I_2^0}^{-1}\circ \Psi|_{I_1^0}): I_1^0\rightarrow I_2^0$ is an isomorphism.
Now we define $I^1=(I^0)'= I^0\setminus I^0_2$, with the order $\le '$ determined as in Lemma \ref{lem:6.5for Posets}(c2). Let $\psi _0\colon I^0 \to I^1$ be the canonical identification map, 
and  let $\Psi_1\colon I^1\to I$ be the restriction map.  Proceeding in this way, we obtain the desired family of posets and the desired family of onto poset maps.

Note that the pullback of $\mathcal J$ with respect to $\Psi $ is precisely $\bigsqcup _{k\in \Ma (I)} \mathcal J_k$, and that the corresponding monoid is 
$$M(\bigsqcup _{k\in \Ma (I)} \mathcal J_k) =\prod_{k\in \Ma (I)} M(\mathcal J_k).$$
Moreover, the map induced by $\Psi $ is just the natural map $\prod_{k\in \Ma (I)} M(\mathcal J_k) \to M(\mathcal J)$ induced by the inclusions of the order-ideals
$M(\mathcal J_k)$ into $M(\mathcal J)$. Now, the same proof that we have used above shows that  
$M(\mathcal J)$ can be obtained from $\prod_{k\in \Ma (I)} M(\mathcal J _k)$ by 
a finite sequence of crowned pushouts. By the above argument, 
$M(\mathcal J _k) $ is a refinement monoid for every $k\in \Ma (I)$, and thus so is $\prod _{k\in \Ma (I)} M(\mathcal J _k) $. 
Therefore, we can conclude from Proposition \ref{crownpushmon} that $M(\mathcal J)$ has refinement. This concludes the proof.
\end{proof}

\section{Tameness and refinement property of $M(\mathcal J)$ for arbitrary $I$-systems}
\label{sect:ref-arbitrary}

In this section we will prove that the monoids associated to $I$-systems are tame refinement monoids (Theorem \ref{thm:tameness}). 
We will proceed to develop the proof through several intermediate steps. 
We seek to apply \cite[Theorem 2.6]{AraGood}, so our aim is to build, given an $I$-system $\mathcal J$ and a 
finitely generated submonoid $M'$ of $M(\mathcal J)$, a finitely generated {\it refinement} monoid $\mathcal M$  and monoid homomorphisms
$\gamma: M'\rightarrow \mathcal M $ and $\delta : \mathcal M\rightarrow M(\mathcal{J})$ such that $\delta \circ \gamma =\mbox{Id}_{M'}$. 
Note that, in order to achieve this, we can replace $M'$ be any larger submonoid of $M(\mathcal J)$. The larger submonoid needed for the proof  
will be of the form considered in Lemma \ref{lem:Pas1}. Lemmas \ref{lem:enlarging-K} and \ref{lem:order-induction} provide the technical ingredients
needed to build a suitable finitely generated refinement monoid $\mathcal M$ and suitable maps $\gamma, \delta$. 

\begin{lem}\label{lem:Pas1}
Let $I$ be an arbitrary poset, let $\mathcal{J}$ be an $I$-system, let $M:=M(\mathcal{J})$ be the associated conical monoid, and let $M'$ be a finitely generated submonoid of $M$. 
Then, there exists a finite subset $K$ of $I$ and a family of subsemigroups $S_i$ of $M_i$ for all $i\in K$ such that: 
\begin{enumerate}
\item $M'\subseteq \{0\}\sqcup \Big( \sum\limits _{i\in K} \chi_i (S_i)\Big) $.
\item There exist  finitely generated subgroups $X_i$ of $G_i$, $i\in K$, such that
$$S_i=\left\{
\begin{array}{cc}
  X_i ,  & \text{if  } i\in K_{reg}  \\
  \N\times X_i ,  &   \text{if  } i\in K_{free}
\end{array}\right.
$$
\end{enumerate}
\end{lem}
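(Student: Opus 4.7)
The plan is essentially a bookkeeping argument: take generators of $M'$, use the presentation of $M(\mathcal J)$ to expand each as a finite sum of $\chi_i$-terms, collect the indices that appear into $K$, and saturate the group data to finitely generated subgroups.

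First I would pick a finite set of monoid generators $g_1,\dots,g_n$ for $M'$. By Corollary \ref{cor:presentationofM} together with Notation \ref{nota:chisubi}, the monoid $M(\mathcal J)$ is generated by the elements $\chi_i(x)$ with $i\in I$ and $x\in M_i$, so each nonzero $g_l$ admits an expression
$$g_l=\sum_{k}\chi_{i_{l,k}}(x_{l,k}),$$
with finitely many summands, $i_{l,k}\in I$, and $x_{l,k}\in M_{i_{l,k}}$. Define $K:=\{i_{l,k}\}_{l,k}$, a finite subset of $I$, inheriting the partition $K=K_{\mathrm{free}}\sqcup K_{\mathrm{reg}}$ from $I$.

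For each $i\in K$ I would now build the required subgroup $X_i$ of $G_i$. If $i\in K_{\mathrm{reg}}$, set $T_i=\{x_{l,k}:i_{l,k}=i\}\subseteq G_i$; if $i\in K_{\mathrm{free}}$, write $x_{l,k}=(n_{l,k},g_{l,k})\in\N\times G_i$ whenever $i_{l,k}=i$, and set $T_i=\{g_{l,k}:i_{l,k}=i\}\subseteq G_i$. In both cases $T_i$ is finite; let $X_i$ be the subgroup of $G_i$ that it generates. Finally put $S_i=X_i$ if $i\in K_{\mathrm{reg}}$ and $S_i=\N\times X_i$ if $i\in K_{\mathrm{free}}$. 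By construction $X_i$ is finitely generated and, in either case, $S_i$ is a subsemigroup of $M_i$ of the required shape, with $x_{l,k}\in S_{i_{l,k}}$ for every $l,k$.

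It remains to verify the containment $M'\subseteq\{0\}\sqcup\sum_{i\in K}\chi_i(S_i)$. Each summand $\chi_{i_{l,k}}(x_{l,k})$ lies in $\chi_{i_{l,k}}(S_{i_{l,k}})$, so every $g_l$ lies in $\{0\}\cup\sum_{i\in K}\chi_i(S_i)$. Since $S_i$ is a subsemigroup of $M_i$, its image $\chi_i(S_i)$ is a subsemigroup of $M$, and the paper's convention makes $\sum_{i\in K}\chi_i(S_i)$ the subsemigroup of $M$ generated by $\bigcup_{i\in K}\chi_i(S_i)$, in particular closed under addition. Adjoining $0$ produces a submonoid of $M$ containing all the generators of $M'$, hence containing $M'$.

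There is no real obstacle in this argument; it is a direct application of the presentation of $M(\mathcal J)$ combined with the closure of the set of sums under addition. The only minor point worth flagging is that one does \emph{not} need $S_j$ to be closed under the maps $\varphi_{ji}$: whenever an identity $\chi_j(s_j)+\chi_i(s_i)=\chi_j(s_j+\varphi_{ji}(s_i))$ is invoked in $M$, we may retain the left-hand representation, which already lies in $\sum_{i\in K}\chi_i(S_i)$.
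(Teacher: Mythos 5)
Your argument is correct and is exactly the expansion of the paper's own (one-line) proof, which simply cites Corollary \ref{cor:presentationofM} and the form of the semigroups $M_i$: expand the finitely many generators of $M'$ via the presentation, collect the finitely many indices into $K$, and let $X_i$ be generated by the finitely many group components that occur. Your closing remark that closure of the $S_j$ under the maps $\varphi_{ji}$ is not needed here is also accurate; that issue is only confronted later, in Lemmas \ref{lem:enlarging-K} and \ref{lem:order-induction}.
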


\begin{proof} This is clear from Corollary \ref{cor:presentationofM} and the form of the semigroups $M_i$ for $i\in I$.
\end{proof}

We want to enlarge the data in Lemma \ref{lem:Pas1} to a larger set of data, so that the relations satisfied by the monoid $\{ 0 \}\sqcup \Big( \sum\limits_{i\in K} \chi _i(S_i)\Big)$ are ``explained''
by the new monoid we are going to build. For this, it is convenient to use the associated partial order of groups $((\widehat{G}_i)_{i\in I},\widehat{\varphi}_{ji} (i<j))$
(see Section \ref{section:Isystems}). For any poset $I$ and any $I$-system $\mathcal J$, $\widetilde{M}(\mathcal J)$ stands for the conical regular monoid associated to
the partial order of groups $((\widehat{G}_i)_{i\in I},\widehat{\varphi}_{ji} (i<j))$. Recall that $M(\mathcal J)\subseteq \widetilde{M}(\mathcal J)$ (see Definition \ref{def:MJ} and Corollary \ref{cor:NouSubmonoid}).

Recall the semilattice of abelian groups $\Big( (\widehat{H}_a)_{a\in A(I)}, f_{a}^b \, \, (a\subset b) \Big)$ associated in Section 1 to the partial order of groups $((\widehat{G}_i)_{i\in I}, \widehat{\varphi} _{ji})$.
There is a corresponding associated monoid $MH (\mathcal J)= \bigsqcup\limits _{a\in A(I)} \widehat{H}_a$, and a canonical surjective monoid homomorphism
$$\Phi\colon MH(\mathcal J)\to \widetilde{M}(\mathcal J).$$

Let $K$ be a finite subset of $I$, and let $(X_i)_{i\in K}$ be a family of finitely generated groups, with $X_i$ a subgroup of $G_i$ for each $i\in K$.
 Let $\widehat{X}_i$ be the corresponding (finitely generated) subgroup of $\widehat{G}_i$ (so that $\widehat{X}_i= X_i$ if $i\in I_{{\rm reg}}$ and 
 $\widehat{X}_i=\Z\times  X_i$ if $i\in I_{{\rm free}}$). 
Consider the monoid
$$F= \{0\} \sqcup \Big( \bigoplus _{i\in K} \widehat{X}_i \Big) .$$
We have an obvious homomorphism $f\colon F\to MH(\mathcal J)$ sending $x\in \widehat{X}_i$ to $\chi(I\dnw i, i,x)\in \widehat{H}_{I\dnw i}$.

\begin{lem}
 \label{lem:enlarging-K}
 In the situation described before, 
there exists a finite subset $I'$ of $I$ containing $K$, and a family of finitely generated 
 subgroups $G_i'$ of $G_i$, for $i\in I'$, with $X_i\subseteq G_i'$ for $i\in K$, such that $\ker (\Phi \circ f)$
 is generated by a finite set of elements $\mathcal F$ satisfying the following property: For each $(x,y)\in \mathcal F$ there is a unique $a\in A(I)$ 
 such that $f(x), f(y)\in \widehat{H}_{a}$, and 
$f(x)-f(y)$ belongs to the subgroup of $\widehat{H}_a$ generated by the elements $\chi(a, i, g)-\chi (a,j,\widehat{\varphi}_{ji}(g))$, with
$g\in \widehat{G}_i'$ and $i,j\in I'$, $i<j\in a$. 
 \end{lem}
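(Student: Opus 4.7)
The plan is to reduce to the fact that subgroups of finitely generated abelian groups are finitely generated, and then carve the required data $I'$ and $G_i'$ out of a chosen finite expansion of each kernel generator in terms of the standard generators of $U_{I \dnw K}$.

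First I restrict $f$ to the group component $G := \bigoplus_{i \in K} \widehat{X}_i$ of $F$. The map $f|_G$ is the natural group embedding of $G$ into $\widehat{H}_{I \dnw K}$, where $I \dnw K := \bigcup_{i \in K} I \dnw i$ belongs to $A(I)$ because $\Ma(I \dnw K) = \Ma(K)$ is finite. Hence $\Phi \circ f|_G \colon G \to \widetilde{G}_{I \dnw K}$ is a group homomorphism, and its kernel $N = \{x \in G : f(x) \in U_{I \dnw K}\}$ is a subgroup of the finitely generated abelian group $G$, and so is itself finitely generated. I fix a finite generating set $z_1, \dots, z_n$ of $N$.

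For each $k$, the definition of $U_{I \dnw K}$ provides a finite expansion
$$f(z_k) = \sum_{l=1}^{m_k} \Bigl( \chi(I\dnw K, i_{k,l}, u_{k,l}) - \chi(I\dnw K, j_{k,l}, \widehat{\varphi}_{j_{k,l} i_{k,l}}(u_{k,l})) \Bigr),$$
with $i_{k,l} < j_{k,l}$, $j_{k,l} \in \Ma(I \dnw K) = \Ma(K) \subseteq K$, and $u_{k,l} \in \widehat{G}_{i_{k,l}}$ (integer coefficients and signs having been absorbed into the $u_{k,l}$). I then set $I' := K \cup \{i_{k,l} : 1 \le k \le n,\ 1 \le l \le m_k\}$, which is finite and contains $K$, and for each $i \in I'$ I let $G_i' \subseteq G_i$ be the subgroup generated by $X_i$ (when $i \in K$) together with the $G_i$-components of those $u_{k,l}$ for which $i_{k,l} = i$. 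Each $G_i'$ is finitely generated, and by construction $X_i \subseteq G_i'$ for $i \in K$ and every $u_{k,l}$ now lies in $\widehat{G}_{i_{k,l}}'$.

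Finally I take $\mathcal{F} := \{(z_k, 0_G) : 1 \le k \le n\}$. For each such pair, $f(z_k)$ and $f(0_G) = 0$ both lie in the same component $\widehat{H}_{I \dnw K}$, and the displayed expansion writes $f(z_k) - f(0_G) = f(z_k)$ as a $\Z$-linear combination of the generators $\chi(a, i, g) - \chi(a, j, \widehat{\varphi}_{ji}(g))$ with $a = I \dnw K$, $i, j \in I'$, $i < j \in a$, and $g \in \widehat{G}_i'$, as required. To see that $\mathcal{F}$ generates $\ker(\Phi \circ f)$ as a congruence on $F$, I iterate translation and symmetry starting from the pairs $(z_k, 0_G)$ to produce $(z, 0_G)$ for every $z \in N$; a further translation by $y \in G$ then yields $(z+y, y)$, and these exhaust all non-trivial pairs in the kernel, while pairs involving the adjoined identity $0_F$ are reflexive. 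The one place requiring care is the choice of $I'$ and the $G_i'$ so that they absorb \emph{every} element appearing in the chosen finite expansions of the $f(z_k)$; otherwise the argument is routine bookkeeping around the Noetherian property of finitely generated abelian groups.
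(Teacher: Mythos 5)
Your proof is correct, and it reaches the same endgame as the paper (expand each kernel generator's difference as a finite $\Z$-combination of the defining generators of $U_a$, then let $I'$ and the $G_i'$ be generated by the finitely many indices and group components that appear), but it gets the finite generation of the congruence by a genuinely different and more elementary route. The paper invokes Redei's theorem: $F$ is a finitely generated commutative monoid, hence every congruence on it is finitely generated, and then works with an arbitrary finite generating set $\mathcal F$ whose pairs may live over various $a\in A(I)$. You instead exploit the special shape $F=\{0\}\sqcup G$ with $G=\bigoplus_{i\in K}\widehat{X}_i$ a finitely generated abelian group: the restriction of $\ker(\Phi\circ f)$ to $G\times G$ is exactly the coset relation of the subgroup $N=(\Phi\circ f|_G)^{-1}(0)\les G$, which is finitely generated because subgroups of finitely generated abelian groups are, and the adjoined zero only contributes the reflexive pair since $\widetilde G_{\es}$ and $\widetilde G_{I\dnw K}$ are distinct components. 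Your verification that the pairs $(z_k,0_G)$ generate the congruence (translation, symmetry, transitivity, and inverses via $(0_G,z_k)$ translated by $-z_k+y$) is sound, and all your pairs conveniently sit over the single component $a=I\dnw K$ with the upper indices $j_{k,l}\in\Ma(K)\subseteq K\subseteq I'$, so the stated support condition holds. What the paper's approach buys is generality (it would survive if $F$ were not a group-with-zero); what yours buys is self-containedness, avoiding the citation of Redei's finiteness theorem, at the cost of a case analysis on the adjoined zero. One cosmetic difference worth noting: the paper's $G_i'$ also absorbs the images $\widehat{\varphi}_{ji}(u)$ into $G_j'$, which your construction omits; this is not required by the statement of the lemma (only $g\in\widehat{G}_i'$ for the lower index $i$ is demanded), and the subsequent enlargements in Remark \ref{rem:S_ij dins G_i'} and Lemma \ref{lem:order-induction} make the omission harmless.
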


\begin{proof}
Since $F$ is a finitely generated abelian monoid, it follows from Redei's Theorem \cite{Freyd} that $\ker (\Phi \circ f)$ is a finitely generated congruence.
So, there is a finite set $\mathcal F$ of elements generating $\ker (\Phi \circ f)$. 
 For $(x,y)\in \mathcal F$,  $\Phi(f(x))= \Phi(f(y))\in \widehat{H}_a/U_a$ for a unique $a\in A(I)$. Therefore $f(x),f(y)\in \widehat{H}_a$,
 and $f(x)-f(y)$ is a finite sum of elements of the form $\pm (\chi (a,i,u)-\chi (a,j,\widehat{\varphi}_{ji}(u))) $, for $u\in \widehat{G}_i$ and $i<j\in a$. 
 Now, let $I'$ be the union of $K$ and the (finite) support of all these elements. For $i\in K$, let $G_i'$ be the subgroup of $G_i$ generated by $X_i$ and 
 the $G_i$-components of the elements of $\widehat{G}_i$ appearing in the above expressions (that is, elements $u$ in $\widehat{G}_i$ such that
 $\chi (a,i,u)-\chi (a,j,\widehat{\varphi}_{ji}(u))$ appears in the expression of $f(x)-f(y)$ for some $(x,y)\in \mathcal F$, 
 and elements of the form $\widehat{\varphi}_{ij}(u)$, where $u\in \widehat{G}_j$, $j<i$, and
 $\chi (a,j,u)-\chi (a,i,\widehat{\varphi}_{ij}(u))$ appears in the expression of $f(x)-f(y)$ for some $(x,y)\in \mathcal F$). 
Similarly, for $i\in I'\setminus K$, let $G_i'$ be the subgroup of $G_i$ generated by the $G_i$-components of the elements of $\widehat{G}_i$ appearing in the above expressions.
   \end{proof}
   
 \medskip

The subset $I'$ of $I$ obtained in Lemma \ref{lem:enlarging-K} will be considered as a poset with the order $\le$ inherited from $(I,\le )$. Now, for any pair $i,j\in I$ with $j<i$, we define an auxiliary subgroup $S_{ij}$ of $G_i$. 

\begin{defi}\label{def:S_ij}
{\rm Let $I$ be a poset,  and let $\mathcal{J}=(I, (G_i)_{i\in I}, \varphi_{ij} (j<i))$ be an $I$-system. Then, for any $i,j\in I$ with $j<i$, we define a group $S_{ij}$ as follows:
\begin{enumerate}
\item If $j$ is regular, we define $S_{ij}$ to be the trivial subgroup of $G_i$.
\item If $i$ is regular and $j$ is free, we define $S_{ij}$ to be the subgroup of $G_i$ generated by $\varphi_{ij}(1,e_j)$.
\item If both $i$ and $j$ are free then, by condition (c2) in Definition \ref{def:I-system}, there are a finite subset $T_{ij}\subset I$ and elements $\{ z^{(ij)}_t : t\in T_{ij} \}$ with $t<i$, and $z^{(ij)}_t\in M_t$ for all $t\in T_{ij}$, such that 
$$-\varphi_{ij} (1,e_j) = \sum\limits_{t\in T_{ij}} \varphi _{it} (z^{(ij)}_t) .$$
We define $S_{ij}$ to be the subsemigroup of $G_i$ generated by 
$$\{\varphi_{ij} (1,e_j) \}\cup \Big( \{ \varphi_{it} (z_t^{(ij)} ): t\in T_{ij} \} \Big).$$ 
Note that $S_{ij}$ is indeed a {\it finitely generated subgroup} of $G_i$, since it contains the inverse of each one of its generators.
\end{enumerate}}
\end{defi}

\begin{rema}\label{rem:S_ij dins G_i'}
{\rm Observe that we can assume, without loss of generality, that the finitely generated subgroups $G_i'$, for $i\in I'$, obtained in Lemma \ref{lem:enlarging-K},  
satisfy that $\sum\limits _{j<i, j\in I'} S_{ij}\subseteq G_i'$. }
\end{rema}

\medskip
 
We now state a crucial lemma.

\begin{lem}
 \label{lem:order-induction} Let $I'$ and $\{ G_i' :i\in I' \}$ be as stated in Lemma \ref{lem:enlarging-K}, and assume that $\sum\limits _{j<i, j\in I'} S_{ij}\subseteq G_i'$
 for all $i\in I'$. Then, there exists a finite poset $$(I'',\le ') = (I'\sqcup J', \le')$$ 
 such that
 \begin{enumerate}
  \item The elements of $J'$ are pairwise incomparable minimal elements of $I''$,
  \item The order $\le'$ agrees with the original order $\le$ on $I'$,
   \end{enumerate}
and there exist a family of finitely generated subgroups $(G_i'')_{i\in I'}$, with $G_i'\subseteq G_i''\subseteq G_i$ for all $i\in I'$, a map
$\tau\colon J'\to (I\setminus I')$ such that, for $j\in J'$ and $i\in I'$, we have  $j\le' i \implies \tau (j)< i$ in $I$,
and elements $x_j\in M_{\tau (j)}$, $j\in J'$, such that 
\begin{enumerate}
 \item[(a)] $G_i'' = \sum\limits _{j<i,\, j\in I'}\varphi_{ij}(M_j'') + \sum\limits _{j\le' i,\, j\in J'} \langle \varphi_{i\tau (j)} (x_j) \rangle \, \,$
  for all $i\in I'_{{\rm free}}$
 \item[(b)] $G_i'' \supseteq  \sum\limits _{j<i,\, j\in I'} \varphi_{ij}(M_j'') + \sum\limits _{j\le' i,\,  j\in J'} \langle \varphi_{i\tau (j)} (x_j) \rangle \,\,$
  for all $i\in I'_{{\rm reg}}$
 \end{enumerate}
where, for $i\in I'$, we set 
$$M_i''=\left\{
\begin{array}{cc}
  G_i'' ,  & \text{if  } i\in (I')_{reg}  \\
  \N\times G_i'' ,  &   \text{if  } i\in (I')_{free}
\end{array}\right.
$$
 \end{lem}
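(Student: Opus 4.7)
The plan is to induct on $|I'|$. The base case $|I'|=0$ is vacuous. For the inductive step, pick a maximal element $i_0 \in I'$, set $I_0' := I' \setminus \{i_0\}$, and reduce to the analogous statement for $I_0'$ after propagating the relations at $i_0$ downward via (c2).

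If $i_0 \in I'_{\mathrm{free}}$, I fix a finite generating set $\{g_1, \dots, g_m\}$ of the abelian group $G_{i_0}'$, and for each $l$ use condition (c2) of Definition \ref{def:I-system} to write $g_l = \sum_{s} \varphi_{i_0 k_s^{(l)}}(y_s^{(l)})$ with $k_s^{(l)}<i_0$ in $I$ and $y_s^{(l)} \in M_{k_s^{(l)}}$. For each $j\in I'_{\mathrm{free}}$ with $j < i_0$, I also fix the decomposition $-\varphi_{i_0 j}(1, e_j) = \sum_t \varphi_{i_0 t}(z_t^{(i_0 j)})$ furnished by Definition \ref{def:S_ij}, which lies inside $G_{i_0}'$ thanks to the hypothesis $\sum_{j<i_0,\, j\in I'} S_{i_0 j} \subseteq G_{i_0}'$. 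For each component $(k,y)$ appearing in any of these decompositions: if $k \in I_0'$, I adjoin the $G_k$-component of $y$ to $G_k'$, producing a finitely generated enlargement $\widetilde{G}_k' \supseteq G_k'$; if $k \in I\setminus I'$, I adjoin a new minimal element $j \in J'$ with $\tau(j)=k$, $x_j = y$, and $j \leq' i_0$. The containment $\sum_{l<k,\,l\in I_0'} S_{kl} \subseteq \widetilde{G}_k'$ is preserved by these enlargements, so the inductive hypothesis applies to $I_0'$ with inputs $(\widetilde{G}_k')_{k\in I_0'}$ and delivers the lower data: groups $(G_k'')_{k\in I_0'}$, a set $J_0'$, a map $\tau_0$, elements $(x_j)_{j\in J_0'}$, and the order $\leq'$ on $I_0' \sqcup J_0'$. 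Combining these with the data created for $i_0$ yields $J' = J_0' \sqcup \{\text{new elements}\}$, with $\tau$ and $\leq'$ extended so that the new elements sit only beneath $i_0$. I then define $G_{i_0}''$ as the finitely generated subgroup of $G_{i_0}$ generated by $G_{i_0}'$, by $c_{i_0 j}:=\varphi_{i_0 j}(1, e_j)$ and $\widehat{\varphi}_{i_0 j}(h)$ for $j<i_0$ in $I'$ and $h$ ranging over a finite generating set of $G_j''$, and by $\varphi_{i_0 \tau(j)}(x_j)$ for each $j\in J'$ with $j\leq' i_0$. If $i_0 \in I'_{\mathrm{reg}}$, no decompositions are needed and $G_{i_0}''$ is defined by the same formula; condition (b) then follows by construction.

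For $i_0 \in I'_{\mathrm{free}}$, the identity $\varphi_{i_0 j}(n, h) = n\, c_{i_0 j} + \widehat{\varphi}_{i_0 j}(h)$ (valid in $\widehat{G}_{i_0}$) shows that every element of $\varphi_{i_0 j}(M_j'')$ lies in $G_{i_0}''$, so $G_{i_0}'' \supseteq \text{RHS of (a)}$. The reverse inclusion reduces, via the fixed (c2)-decompositions, to checking that each $y_s^{(l)}$ lies in $M_{k_s^{(l)}}''$ (when $k_s^{(l)}\in I'$) or coincides with one of the new $x_j$'s (when $k_s^{(l)}\in I\setminus I'$); both are guaranteed by the propagation.

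The main obstacle is the inverse-closure of the right-hand side of (a): it is written as a sum of subsemigroups of $G_{i_0}$, yet must equal the subgroup $G_{i_0}''$. The hypothesis $\sum_{j<i_0,\,j\in I'} S_{i_0 j}\subseteq G_{i_0}'$ is crucial here, since it places the fixed decomposition $-c_{i_0 j} = \sum_t \varphi_{i_0 t}(z_t^{(i_0 j)})$ inside the RHS (after the inductive propagation has placed each $z_t^{(i_0 j)}$ into either $M_t''$ or the new $J'$-data); combined with the identity $\widehat{\varphi}_{i_0 j}(-h) = \varphi_{i_0 j}(1,-h) - c_{i_0 j}$, this yields $-\varphi_{i_0 j}(n, h) = (n+1)\sum_t \varphi_{i_0 t}(z_t^{(i_0 j)}) + \varphi_{i_0 j}(1, -h)$ as a positive combination of sum-elements. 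The analogous inverse-closure for the $\langle \varphi_{i_0 \tau(j)}(x_j)\rangle$-summands is secured by creating, whenever a new $j\in J'$ with $\tau(j)$ free is adjoined, a companion $j' \in J'$ with $x_{j'}=(1,-h)$, and by also fixing and propagating a (c2)-decomposition of $-c_{i_0 \tau(j)}$; these additions are finite in number per inductive stage, so the whole construction yields finitely generated groups and a finite $J'$, completing the induction.
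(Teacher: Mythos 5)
Your overall strategy is the same as the paper's: process one element of $I'$ at a time, use condition (c2) to decompose a finite set of generators of the group sitting at a free element, push the components either into enlarged finitely generated subgroups at indices of $I'$ or into new minimal elements of $J'$ (with $\tau$ recording the true index and $x_j$ the component), and use the hypothesis $\sum_{j<i}S_{ij}\subseteq G_i'$ together with zero-sum families to force the semigroup sum on the right of (a) to be a group. Running the recursion by deleting a maximal element and recursing on the lower set $I'\setminus\{i_0\}$, rather than the paper's order-induction on upper sets with repeated re-updating of the already-defined groups, is a legitimate and arguably cleaner reorganization: since $i_0$ is maximal, nothing above it ever needs to see $G_{i_0}''$, and the sets $\{j<i:\ j\in I'\}$ and $\{j<i:\ j\in I'\setminus\{i_0\}\}$ coincide for $i\neq i_0$, so the formulas (a),(b) delivered by the recursive call survive unchanged. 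Your identity $-\varphi_{i_0j}(n,h)=(n+1)\sum_t\varphi_{i_0t}(z^{(i_0j)}_t)+\varphi_{i_0j}(1,-h)$ is exactly the mechanism the paper uses (via the elements $u_j$ and $w_{(j,s)}$) to invert the non-group summands $\varphi_{i_0j}(\N\times G_j'')$.

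There is, however, one concrete gap: your inverse-closure devices are only installed for \emph{free} targets. New elements $j\in J'$ with $\tau(j)=k\in(I\setminus I')_{\mathrm{reg}}$ do arise (a component $y^{(l)}_s\in M_k=G_k$ of the (c2)-decomposition of a generator $g_l$ of $G_{i_0}'$ may well live at a regular index outside $I'$), and for such $j$ the summand $\langle\varphi_{i_0k}(x_j)\rangle$ is merely the cyclic subsemigroup $\{n\varphi_{i_0k}(x_j):n\geq1\}$; nothing in your construction puts $-\varphi_{i_0k}(x_j)=\varphi_{i_0k}(-x_j)$ into the right-hand side of (a), since $-x_j$ is neither in any $M_t''$ (as $k\notin I'$) nor among your adjoined $x$'s, and $S_{i_0k}$ is trivial for regular $k$ so the hypothesis gives no help. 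Without this, the right-hand side of (a) need not be a group and the equality fails. The paper closes this by adjoining, for \emph{every} generator $x_{j,t}$ of every $N_j'$ with $j\notin I'$ (free or regular), a pair of new elements $v_{j,\pm t}$ carrying $x_{j,t}$ and its sign-reversed counterpart, so that each adjoined generator sits in an explicit finite zero-sum family. The same fix works verbatim in your setup (adjoin a companion with $x_{j'}=-x_j$ for regular targets, or equivalently take the generating set $\{g_l\}$ of $G_{i_0}'$ symmetric and propagate the decompositions of the $-g_l$ as well); you should also note, for the auxiliary elements $w$ carrying the components $z_t$ of the decomposition of $-c_{i_0\tau(j)}$, that their inverses are obtained from the zero-sum relation $\varphi_{i_0\tau(j)}(x_j)+\varphi_{i_0\tau(j)}(x_{j'})+(n+1)\sum_t\varphi_{i_0t}(z_t)=0$ rather than from a further round of decompositions, which is what guarantees the process terminates. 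With these points supplied the argument is complete and matches the paper's.
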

\begin{proof} We will show by (order-)induction the following statement:

\medskip

{\it Let $\mathcal U$ be an upper subset of $I'$. Then there exists a finite poset $$(I_{\mathcal U},\le_{\mathcal U}) = (I'\sqcup J_{\mathcal U}, \le_{\mathcal U})$$ 
 such that
 \begin{enumerate}
  \item The elements of $J_{\mathcal U}$ are pairwise incomparable minimal elements of $I_{\mathcal U}$,
  \item The order $\le_{\mathcal U}$ agrees with the original order $\le$ on $I'$,
  \item For $j\in J_{\mathcal U}$ and $i\in I'$ we have $j\le_{\mathcal U} i\implies i\in \mathcal U$, 
 \end{enumerate}
and there exist a family of finitely generated subgroups $(G_i^{\mathcal U})_{i\in I'}$, with $G_i'\subseteq G_i^{\mathcal U}\subseteq G_i$ for all $i\in I'$, a map
$\tau_{\mathcal U}\colon J_{\mathcal U}\to (I\setminus I')$ such that, for $j\in J_{\mathcal U}$ and $i\in I'$, we have  $j\le_{\mathcal U} i \implies \tau_{\mathcal U} (j)\le i$ in $I$,
and elements $x_j\in M_{\tau_{\mathcal U} (j)}$, $j\in J_{\mathcal U}$, such that 
\begin{enumerate}
 \item[(a)] $G_i^{\mathcal U} = \sum\limits _{j<i,\, j\in I'}\varphi_{ij}(M_j^{\mathcal U}) + \sum\limits _{j\le_{\mathcal U} i,\, j\in J_{\mathcal U}} \langle \varphi_{i\tau_{\mathcal U}(j)} (x_j) \rangle $
 for all $i\in \mathcal U_{{\rm free}}$
 \item[(b)] $G_i^{\mathcal U} \supseteq  \sum\limits _{j<i,\, j\in I'} \varphi_{ij}(M_j^{\mathcal U}) + \sum\limits _{j\le_{\mathcal U} i,\,  j\in J_{\mathcal U}} \langle \varphi_{i\tau_{\mathcal U}(j)} (x_j) \rangle $
 for all $i\in \mathcal U_{{\rm reg}}$
 \end{enumerate}
where, for $i\in I'$, we set 
$$M_i^{\mathcal U}=\left\{
\begin{array}{cc}
  G_i^{\mathcal U} ,  & \text{if  } i\in (I')_{reg}  \\
  \N\times G_i^{\mathcal U} ,  &   \text{if  } i\in (I')_{free}
\end{array}\right.
$$}

\medskip

Once this is done, the statement in the lemma follows by taking $J':=J_{I'}$, $\, \le ' := \le_{I'}$, $\, \tau := \tau_{I'}$, and $G_i'':= G_i^{I'}$ for all $i\in I'$.  

\medskip

We start with $\mathcal U =\emptyset$. In this case we set $J_{\emptyset} = \emptyset$, so that $I_{\emptyset}= I'$ with the order
$\le $ induced from $I$, and we set $G_i^{\emptyset} = G_i'$. 

Assume that $\mathcal U$ is an upper subset of $I'$ for which we have defined $I_{\mathcal U}= I'\sqcup J_{\mathcal U}$, together with the partial order $\le _{\mathcal U}$
which satisfies the stated conditions (1)--(3), the map $\tau_{\mathcal U}$, subgroups $G_i^{\mathcal U}$, $i\in I'$ and elements $x_j\in M_{\tau_{\mathcal U}(j)}$, $j\in J_{\mathcal U}$ 
satisfying conditions (a),(b).
Let $i_0$ be a maximal element in $I'\setminus \mathcal U$. We will build the corresponding objects for the upper subset $\mathcal U ':= \mathcal U \cup \{i_0\}$.

Assume first that $i_0$ is regular. Then, we set $J_{\mathcal U '}= J_{\mathcal U}$, $\tau _{\mathcal U'}= \tau_{\mathcal U}$,  $\le _{\mathcal U '}=\le _{\mathcal U}$, and $G_i^{\mathcal U'}= G_i^{\mathcal U}$
for $i\in I'\setminus \mathcal U'$. 

We define 
$$G_{i_0}^{\mathcal U'}= G_{i_0}^{\mathcal U}+\sum\limits _{i<i_0, i\in I'} \widehat{\varphi} _{i_0i} (G_i^{\mathcal U'}) .$$
For $i\in \mathcal U$,  we define inductively $G_i^{\mathcal U'}$ by
$$G_{i}^{\mathcal U'}= G_{i}^{\mathcal U}+\sum\limits _{j<i, j\in I'} \widehat{\varphi} _{ij} (G_j^{\mathcal U'}) .$$
We have to check condition (b) for $i_0$ and conditions (a) or (b) for $i\in \mathcal U$ according to whether $i$ is free or regular respectively.

Note that condition (b) for $i_0$ reads 
$$G_{i_0}^{\mathcal U'} \supseteq \sum\limits _{j<i_0, j\in I'} \varphi_{i_0j}(M_j^{\mathcal U}).$$
(Use condition (3) and the facts that $J_{\mathcal U'}=J_{\mathcal U}$ and $\le_{\mathcal U'}=\le_{\mathcal U}$).
For $j\in I'$ with $j<i_0$, since $S_{i_0, j}\subseteq G'_{i_0}\subseteq G^{\mathcal U'}_{i_0}$, we only need to show that $\widehat{\varphi} _{i_0j} (G_{j}^{\mathcal U}) \subseteq G_{i_0}^{\mathcal U'}$,
but this is obvious from the definition.

If $i\in \mathcal U_{{\rm free}}$, then (a) follows from the induction hypothesis and the observation that, for $j<i$, $j\in I'$, we have
$S_{ij}+\varphi _{ij}(M_j^{\mathcal U'}) =S_{ij}+ \widehat{\varphi}_{ij} (G_j^{\mathcal U'})$. The proof of (b) in case $i\in \mathcal U_{{\rm reg}}$ is similar.

We now consider the case where $i_0$ is free. Since $\mathcal J$ is an $I$-system and $G_{i_0}^{\mathcal U}$ is finitely generated, 
there is a finite subset $I^{(i_0)}$ of $\{ j\in I : j<i_0 \}$
and finitely generated subsemigroups $N_j'$ of $M_j$, for $j\in I^{(i_0)}$, such that
\begin{equation}
 \label{eq:i0-inculison}
G_{i_0}^{\mathcal U} \subseteq \sum\limits _{j\in I^{(i_0)}} \varphi_{i_0j} (N_j') .
 \end{equation}

Recall from the construction of $S_{ij}$ for $j<i$ with $i,j\in I_{{\rm free}}$ (Definition \ref{def:S_ij}(3)), that $S_{ij}$ is 
the semigroup generated by the elements $\varphi_{ij} (1,e_j)$ and $\varphi _{it}(z_t^{(ij)})$, where $z_t^{(ij)}\in M_t$, $t\in T_{ij}$,  and that this semigroup is indeed a group. 
We denote by $\widetilde{z}^{(ij)}_t$ the group component of $z_t^{(ij)}\in M_t$, that is $z_t^{(ij)}= (n_t^{(ij)}, \widetilde{z}_t^{(ij)})\in \N\times G_t$ if $t$ is free and
$z_t^{(ij)}= \widetilde{z}_t^{(ij)}\in M_t$ if $t$ is regular. We use a similar notation for the generators of each $N_j'$, so letting $x_{j,1}, \dots , x_{j,n_j}$ be a finite set of semigroup generators 
of $N_j'$, we denote by $\widetilde{x}_{j,t}\in G_j$ the group component of each $x_{j,t}$, so that $x_{j,t} = (n_{j,t}, \widetilde{x}_{j,t})$ if $j$ is free and $x_{j,t} = \widetilde{x}_{j,t}$ if $j$ 
is regular. We will denote by $\widetilde{N}_j'$ the subset of $G_j$ of group elements $\{\widetilde{x}_{j,1}, \dots , \widetilde{x}_{j,n_j}\}$ of the semigroup generators $\{x_{j,1}, \dots , x_{j,n_j}\}$ of $N_j'$.

Let $J_{i_0}:= \bigcup_{j\in I^{(i_0)}_{{\rm free}}} T_{i_0,j}$ be the support of the elements $z_t^{(i_0j)}$ with $j\in I^{(i_0)}_{{\rm free}}$. 
Note that $J_{i_0}$ is a finite subset of $I$, and that $t<i_0$ for all $t\in J_{i_0}$. We will denote $\widetilde{R}_{i_0}=\{ \widetilde{z}_j^{(i_0,j')}: j'\in I^{(i_0)}_{{\rm free}}, \, j\in T_{i_0,j'}\}$.

We first define $G_j^{\mathcal U'}$ for $j\in I'\setminus \mathcal U'$. We use the notation $\text{Gp}(X)$ to denote the subgroup generated by a subset $X$ of
a group $G$.

$\bullet $ If $j\in I'\setminus \mathcal U '$ and $j\notin J_{i_0}\cup I^{(i_0)}$, then set $G_j^{\mathcal U'}:=G_j^{\mathcal U}$.

$\bullet $ If $j\in I'\setminus \mathcal U '$, $j\notin J_{i_0}$,  and $j\in  I^{(i_0)}$, then set $G_j^{\mathcal U'}:=G_j^{\mathcal U}+ \text{Gp}(\widetilde{N}_j')$.

$\bullet $ If $j\in I'\setminus \mathcal U '$, $j \in J_{i_0}$ and $j\notin I^{(i_0)}$, then set $G_j^{\mathcal U'}:=G_j^{\mathcal U}+\text{Gp}(\widetilde{R}_{i_0})$.

$\bullet $ If $j\in I'\setminus \mathcal U '$ and $j\in J_{i_0}\cap I^{(i_0)}$, then set $G_j^{\mathcal U'}:=G_j^{\mathcal U}+\text{Gp}(\widetilde{N}_j')+ 
\text{Gp}(\widetilde{R}_{i_0})$.
\vspace{.2truecm}

It is convenient at this point to introduce the following set:
$$Z_{i_0}:= \{ (j,s)\in I^{(i_0)}_{{\rm free}}\times (J_{i_0}\setminus I') : s\in T_{i_0,j} \}.$$
In words, $Z_{i_0}$ is the set of all ordered pairs $(j,s)$ such that $j\in I^{(i_0)}_{{\rm free}}$ and $s\in T_{i_0,j}\setminus I'$.

Now, we define 
%
$$J_{\mathcal U'}:= J_{\mathcal U}\, \sqcup \Big(\bigsqcup_{j\in I^{(i_0)}\setminus I'} \{ v_{j,\pm t}: 1\le t\le n_j \} \Big) \, \sqcup \{ u_{j} : j\in I^{(i_0)}_{{\rm free}}\setminus I' \}\, 
\sqcup \{ w_{(j,s)} : (j,s) \in Z_{i_0}\} .$$
The new order $\le _{\mathcal U'}$ on $I_{\mathcal U'}:= I'\sqcup J_{\mathcal U'}$ is defined by extending the order $\le_{\mathcal U}$ on $I_{\mathcal U}$ and adding the new relations:
\begin{itemize}
\item[(i)] $v_{j,\pm t} <_{\mathcal U'} i$, when $j\in I^{(i_0)}\setminus I'$, $i\in I'$ and $i_0\le i$,.\item[(ii)]$u_j<_{\mathcal U'} i$, when $j\in I^{(i_0)}_{{\rm free}}\setminus I'$, $i\in I'$ and $i_0\le i$.
\item[(iii)]$ w_{(j,s)} <_{\mathcal U'} i$, when $(j,s)\in Z_{i_0}$ and $i_0\le i$. 
\end{itemize}
Note that the new order enjoys $(1$-$3)$.

Define $\tau _{\mathcal U'}\colon  J_{\mathcal U'}\to I\setminus I'$ by:
\begin{itemize}
\item[(i)] $ \tau_{\mathcal U'}= \tau_{\mathcal U}$ on $J_{\mathcal U}$.
\item[(ii)] $\tau_{\mathcal U'}(v_{j,\pm t})=  j$ for $j\in I^{(i_0)}\setminus I'$. 
\item[(ii)]  $\tau_{\mathcal U'} (u_j)=j$ for $j\in I^{(i_0)}_{{\rm free}}\setminus I'$.  
\item[(iv)] $\tau_{\mathcal U'} (w_{(j,s)}) = s$ for $(j,s)\in Z_{i_0}$.
\end{itemize} 
Observe that, for $\alpha\in J_{\mathcal U'}$ and $i\in \mathcal U'$, we have $\alpha \le_{ \mathcal U'} i \implies \tau_{\mathcal U'}(\alpha )< i$. Indeed, this is clear by induction on $J_{\mathcal U}$, and for
$v_{j,t}$, $u_j$, and $w_{(j,s)}$ it follows from the facts that $j< i_0\le i$ whenever $j\in I^{(i_0)}$ and that $s<i_0\le i$ whenever $s\in J_{i_0}\setminus I'$.

The elements $x_j$ for $j\in J_{\mathcal U}$ are defined to be the same elements $x_j^{\mathcal U}$ which were previously defined by induction, and 
$$x_{v_{j,\pm t}} :=  (n_{j,t}, \pm \widetilde{x}_{j,t}),\qquad  x_{u_j}= (1,e_j), \qquad x_{w_{(j,s)}} := z_s^{(i_0,j)} .$$
Note that $x_{\alpha}\in M_{\tau_{\mathcal U'}(\alpha )}$ for all $\alpha\in J_{\mathcal U'}$.

Finally, we set
$$G_{i_0}^{\mathcal U'}= G_{i_0}^{\mathcal U} + \sum\limits _{j<i_0, j\in I'} \widehat{\varphi}_{i_0j}(G_j^{\mathcal U'}) + \sum\limits _{j\in I^{(i_0)}\setminus I'}\sum\limits _{t=1}^{n_{j}}\langle  \widehat{\varphi}_{i_0j} (\pm \widetilde{x}_{j,t})\rangle +$$
$$+\sum\limits _{j\in I^{(i_0)}_{{\rm free}}\setminus I'} \langle \varphi_{i_0j} (1,e_j) \rangle + \sum\limits _{(j,t)\in Z_{i_0}} \langle \varphi_{i_0t} (z_t^{(i_0,j)}) \rangle .$$
Note that $S_{i_0,j}\subseteq G_{i_0}^{\mathcal U'}$ for all $j\in I^{(i_0)}$. So, $G_{i_0}^{\mathcal U'}$ is a group.

We have to verify condition (a) in Lemma \ref{lem:order-induction} for $G_{i_0}^{\mathcal U'}$. Let us denote by $A$ the right hand side of that formula.
Note that 
$$A= \sum\limits _{j<i_0,\, j\in I'}\varphi_{i_0 j}(M_j^{\mathcal U'}) + \sum\limits _{\alpha \in J_{\mathcal U'}\setminus J_{\mathcal U}}\langle \varphi_{i_0\tau_{\mathcal U'}(\alpha)} (x_{\alpha})\rangle,$$
because if $\alpha\in J_{\mathcal{U}}$ then $\tau_{\mathcal{U}'}(\alpha)\not< i_0$.
We first check that $A\subseteq G_{i_0}^{\mathcal U'}$. For $j\in I'$ with $j<i_0$, since $S_{i_0,j}$ and $\widehat{\varphi}_{i_0j}(G_j^{\mathcal U '})$ are contained in 
$G_{i_0}^{\mathcal U'}$, we get that $\varphi _{i_0j} (M_j^{\mathcal U'})\subseteq G_{i_0}^{\mathcal U'}$.
Similarly, for $j\in I^{(i_0)}\setminus I'$, we have that $S_{i_0,j}\subseteq G_{i_0}^{\mathcal U'}$, and so 
$$\varphi_{i_0j}(x_{v_{j,\pm t}})=\varphi_{i_0j}((n_{j,t}, \pm \widetilde{x}_{j,t}))\in G_{i_0}^{\mathcal U'}.$$
It is obvious that $\varphi_{i_0j} (x_{u_j})= \varphi_{i_0j}(1,e_j)$ for $j\in I^{(i_0)}_{{\rm free}}\setminus I'$, and $\varphi _{i_0t}(x_{w_{(j,t)}})=\varphi_{i_0t} (z_t^{(i_0,j)})$ 
for $j\in I^{(i_0)}_{{\rm free}}$ and $t\in T_{i_0,j}\setminus I'$, 
belong to $G_{i_0}^{\mathcal U'}$.

Conversely, we now show that $G_{i_0}^{\mathcal U'}\subseteq A$. Again, the choice of the groups $G_j^{\mathcal U'}$ and the elements $x_{\alpha}$ makes it 
clear that $S_{i_0,j}\subseteq A$ for all $j\in I^{(i_0)}$. Indeed, given $j \in I^{(i_0)}_{{\rm free}}$, we have two possibilities: 
\begin{itemize}
\item[(i)] If $j\in I'$, then $(1_j, e_j) \in M_j^{\mathcal U'}$, 
and thus $\varphi _{i_0j}(1,e_j)\in A$. 
\item[(ii)] If $j\notin I'$, then $x_{u_j}= (1,e_j)$, and thus $\varphi _{i_0j}(1,e_j)\in A$. 
\end{itemize}
Now, if 
$t \in T_{i_0,j}\cap I'$, then $z_t^{(i_0,j)}\in M_t^{\mathcal U'}$, as $\tilde{z}_t^{(i_0j)}\in G_t^{\mathcal U'}$, while if $t\in  T_{i_0,j}\setminus I'$, then $(j,t)\in Z_{i_0}$ and so 
$z_t^{(i_0,j)}= x_{w_{(j,t)}}$ and $\varphi _{i_0t}(z_t^{(i_0,j)})\in A$.

By (\ref{eq:i0-inculison}) and the choice of the groups $G_j^{\mathcal U'}$ for $j\in I^{(i_0)}\cap I'$, we have
$$G_{i_0}^{\mathcal U} \subseteq \sum\limits_{j\in I^{(i_0)}} \varphi_{i_0j}(N_j')\subseteq \sum\limits _{j\in I^{(i_0)}\cap I'} \Big( S_{i_0,j}+\widehat{\varphi}_{i_0j}(\widetilde{N}_j') \Big) +
\sum\limits_{j\in I^{(i_0)}\setminus I'} \varphi_{i_0j}(N_j')\subseteq A.$$
In particular, we obtain that $S_{i_0,j}\subseteq A$ for all $j\in I^{(i_0)}$ and for all $j\in I'$ such that $j<i_0$ (use that $S_{i_0,j}\subseteq G_{i_0}'\subseteq G_{i_0}^{\mathcal U}$ for $j\in I'$ with $j<i_0$). From this we easily obtain that 
$\widehat{\varphi}_{i_0j}(G_j^{\mathcal U'}) \subseteq A$ for all $j\in I'$ such that $j<i_0$, and that $\widehat{\varphi}_{i_0j} (\pm \widetilde{x}_{j,t})\in A$ for all
$j\in I^{(i_0)}\setminus I'$ and all $t=1,\dots ,n_j$. The elements $\varphi_{i_0j}(1,e_j)$, for $j\in I^{(i_0)}_{{\rm free}}\setminus I'$, and $\varphi_{i_0t}(z_t^{(i_0,j)})$, for 
$(j,t)\in Z_{i_0}$, 
trivially belong to $A$.
This concludes the proof of (a) for $i_0$ and $\mathcal U'$.

Now  we define inductively $G_i^{\mathcal U'}$ for $i\in \mathcal U$. Assuming we have already defined the groups $G_{i'}^{\mathcal U'}$ for $i'\in \mathcal U$ with $i'<i$, 
(and that those satisfy (a) or (b) depending on whether $i'$ is free or regular), define $G_i^{\mathcal U'}$ by the formula
$$G_{i}^{\mathcal U'} := G_i^{\mathcal U} + \sum\limits _{j<i,j\in I'} \widehat{\varphi}_{ij} (G_j^{\mathcal U'}) .$$
We have to show condition (a) or (b) for $i\in \mathcal U$ and $G_i^{\mathcal U'}$, depending on whether $i$ is free or regular. Assume that $i$ is free.
Then (a) reads
$$G_i^{\mathcal U'}=  \sum\limits _{j<i,\, j\in I'}\varphi_{ij}(M_j^{\mathcal U'}) + \sum\limits _{j\le_{\mathcal U'} i,\, j\in J_{\mathcal U'}} \langle \varphi_{i\tau_{\mathcal U'}(j)} (x_j) \rangle .$$
Now, for $j\in J_{\mathcal U'}\setminus J_{\mathcal U}$, we have that $j\le_{\mathcal U'} i$ if and only if $i_0<i$, and, in this case, we have
$\tau_{\mathcal U'}(j)< i_0<i$. Therefore, we get
$$\varphi_{i\tau_{\mathcal U'}(j)}(x_j)= \widehat{\varphi}_{ii_0}(\varphi_{i_0 \tau_{\mathcal U'}(j)}(x_j))\in \widehat{\varphi}_{ii_0}(G_{i_0}^{\mathcal U'}) \subseteq G_i^{\mathcal U'}.$$
Using this and induction, it is easy to verify (a). 
If $i$ is regular, a similar argument  shows that (b) holds for $i$ and $\mathcal U'$. This concludes the proof.
\end{proof}

Now, we are ready to prove the main result in this section.

\begin{theor}
\label{thm:tameness}
Let $I$ be an arbitrary poset, and let $\mathcal J$ be an $I$-system. Then, the monoid $M(\mathcal J)$ is a tame refinement monoid.
\end{theor}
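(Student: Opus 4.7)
The plan is to invoke the criterion of \cite[Theorem 2.6]{AraGood}: $M(\mathcal J)$ will be tame provided that, for every finitely generated submonoid $M'\subseteq M(\mathcal J)$, one can find a finitely generated refinement monoid $\mathcal M$ together with monoid homomorphisms $\gamma\colon M'\to \mathcal M$ and $\delta\colon \mathcal M\to M(\mathcal J)$ satisfying $\delta\circ \gamma=\mathrm{id}_{M'}$. Since this criterion in particular exhibits $M(\mathcal J)$ as a directed union of retracts of finitely generated refinement monoids, the refinement property for $M(\mathcal J)$ is obtained simultaneously. The monoid $\mathcal M$ will be constructed as $M(\mathcal J'')$ for a finitely generated $I''$-system $\mathcal J''$ built from a carefully chosen finite piece of $\mathcal J$; its refinement property and finite generation will then follow immediately from Theorem \ref{theor:total-refinement} and Proposition \ref{prop:finitely-generated}.

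Starting from $M'$, I first invoke Lemma \ref{lem:Pas1} to obtain a finite $K\subseteq I$ and finitely generated subgroups $X_i\leq G_i$ for $i\in K$ so that $M'\subseteq \{0\}\sqcup \sum_{i\in K}\chi_i(S_i)$. Next, combining Lemma \ref{lem:enlarging-K} with Remark \ref{rem:S_ij dins G_i'}, I enlarge $K$ to a finite subset $I'\subseteq I$ and the $X_i$ to finitely generated subgroups $G_i'\leq G_i$ such that the defining relations among the generators of $M'$ (viewed through the map $\Phi\circ f$ into $\widetilde{M}(\mathcal J)$) are generated by finitely many identifications supported on $I'$ with group entries in the $G_i'$, and moreover $\sum_{j<i,\,j\in I'}S_{ij}\subseteq G_i'$ for every $i\in I'$. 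A third application, this time of Lemma \ref{lem:order-induction}, then yields a finite poset $I''=I'\sqcup J'$ with the elements of $J'$ adjoined as pairwise incomparable minimal elements, finitely generated enlargements $G_i'\subseteq G_i''\leq G_i$ for $i\in I'$, a map $\tau\colon J'\to I\setminus I'$, and elements $x_j\in M_{\tau(j)}$ for $j\in J'$, satisfying the surjectivity equality (a) for every $i\in I'_{\mathrm{free}}$ and the containment (b) for every $i\in I'_{\mathrm{reg}}$.

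Using these ingredients I define the $I''$-system $\mathcal J''$ as follows: keep the free/regular partition of $I'$; declare every $j\in J'$ to be a free minimal element with trivial group $G_j''=\{e_j\}$, so that $M_j''=\N$; for $i,j\in I'$ with $i<j$, take $\varphi''_{ji}$ to be the restriction of $\varphi_{ji}$ to $M_i''$, which lands in $G_j''$ by Lemma \ref{lem:order-induction}(a)/(b); and for $j\in J'$ and $i\in I'$ with $j<' i$, set $\varphi''_{ij}(n,e_j):=n\,\varphi_{i\tau(j)}(x_j)\in G_i''$, which is meaningful because $\tau(j)<i$ in $I$ and the image lies in $G_i''$ by the construction in Lemma \ref{lem:order-induction}. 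Axiom (c1) of Definition \ref{def:I-system} for $\mathcal J''$ reduces to functoriality for $\mathcal J$ combined with $\tau(j)<i$; axiom (c2) for $i\in I'_{\mathrm{free}}$ is precisely the content of Lemma \ref{lem:order-induction}(a), and is trivial at the minimal elements of $J'$ by Remark \ref{rem:minimals-in-I}. Thus $\mathcal J''$ is a valid finitely generated $I''$-system.

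Finally, I define the two homomorphisms. For $\delta\colon M(\mathcal J'')\to M(\mathcal J)$, by Corollary \ref{cor:presentationofM} it is enough to set $\delta(\chi_i''(z)):=\chi_i(z)$ for $i\in I'$ and $z\in M_i''\subseteq M_i$, and $\delta(\chi_j''(n,e_j)):=n\,\chi_{\tau(j)}(x_j)$ for $j\in J'$; the defining relations of $M(\mathcal J'')$ then translate to equalities in $M(\mathcal J)$ using the defining relations of $M(\mathcal J)$ and the definition of $\varphi''_{ij}$. For $\gamma\colon M'\to M(\mathcal J'')$, I send each generator $\chi_i(s)$, with $i\in K$ and $s\in S_i\subseteq M_i''$, to $\chi_i''(s)$. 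The identity $\delta\circ \gamma=\mathrm{id}_{M'}$ holds on generators, and $M(\mathcal J'')$ is a finitely generated conical refinement monoid by Proposition \ref{prop:finitely-generated} and Theorem \ref{theor:total-refinement}, completing the application of \cite[Theorem 2.6]{AraGood}. The main obstacle of the whole argument, and the reason why Lemmas \ref{lem:enlarging-K} and \ref{lem:order-induction} were set up so carefully, is the well-definedness of $\gamma$: one must show that every relation satisfied by the generators of $M'$ is already satisfied in $M(\mathcal J'')$, which rests on the fact that Lemma \ref{lem:enlarging-K} provides a finite generating set of such relations supported on $I'$ with entries in the $G_i'\leq G_i''$, and that the structure maps of $\mathcal J''$ restricted to $I'$ agree with those of $\mathcal J$.
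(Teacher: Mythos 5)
Your proposal is correct and follows essentially the same route as the paper's own proof: the same reduction via \cite[Theorem 2.6]{AraGood}, the same chain of Lemmas \ref{lem:Pas1}, \ref{lem:enlarging-K} (with Remark \ref{rem:S_ij dins G_i'}) and \ref{lem:order-induction}, the same construction of the finitely generated $I''$-system $\mathcal J''$ with trivial groups at the adjoined minimal elements of $J'$, and the same maps $\gamma$ and $\delta$, with refinement and finite generation of $M(\mathcal J'')$ supplied by Theorem \ref{theor:total-refinement} and Proposition \ref{prop:finitely-generated}. You also correctly identify the well-definedness of $\gamma$ (via the finitely generated congruence supported on $I'$ with entries in the $G_i'$) as the crux, exactly as in the paper.
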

\begin{proof}
Let $I$ be an arbitrary poset, let $\mathcal{J}$ be an $I$-system, and let $M:=M(\mathcal{J})$ be the associated conical monoid. 

Now, we will show that 
for any finitely generated submonoid $M'$ of $M$ there exist a finite poset $I''$, a finitely generated $I''$-system $\mathcal{J}''$ and monoid homomorphisms
$\gamma: M'\rightarrow M(\mathcal{J}'')$ and $\delta : M(\mathcal{J}'')\rightarrow M(\mathcal{J})$ such that $\delta \circ \gamma =\mbox{Id}_{M'}$. 
By Proposition \ref{prop:finitely-generated}, Theorem \ref{theor:total-refinement} and \cite[Theorem 2.6]{AraGood}, this implies that $M$ is a tame refinement monoid.

Let $M'$ be a finitely generated submonoid of $M(\mathcal J)$. We can assume that $M' = \{ 0\}\sqcup \Big( \sum\limits _{i\in K} \chi_i (S_i)\Big)$, where
$S_i$ are as described in Lemma \ref{lem:Pas1}. Let $I'$ and $(G_i')_{i\in I'}$ be the larger finite subset of $I$, and  the family of groups, respectively,  built in
Lemma \ref{lem:enlarging-K}. We will also assume that  $\sum\limits _{j<i, j\in I'} S_{ij}\subseteq G_i'$
 for all $i\in I'$ (see Remark \ref{rem:S_ij dins G_i'}).

Consider the poset $(I'', \le ') = (I'\sqcup J',\le ')$, the family of groups $(G_i'')_{i\in I'}$, the map $\tau \colon J'\to (I\setminus I')$, and the elements
$x_j\in M_{\tau (j)}$, $j\in J'$, obtained in
Lemma \ref{lem:order-induction}. 
Now, we will define an $I''$-system $\mathcal J''$. First, we set $I''_{{\rm free}} = I'_{{\rm free}}\sqcup J'$
and $I''_{{\rm reg}}= I'_{{\rm reg}}$. The groups corresponding to $i\in I'$ are the groups $G_i''$. For $i\in J'$, set  $G''_i= \{ e_i\}$ (the trivial group). 
For $i,j\in I'$ with $j<i$, we define 
$$\varphi ''_{ij}\colon M_j''\longrightarrow G_i''$$
by $\varphi ''_{ij} = \varphi _{ij}|_{M_j''}$. Observe that this is well defined by (a) and (b) in Lemma \ref{lem:order-induction}. 
If $j\le ' i$ for  $j\in J'$ and $i\in I'$ then, again by Lemma \ref{lem:order-induction}, we must have $\tau (j) < i$ in $I$, so that 
we may define
$$\varphi ''_{ij} \colon \N= M_j'' \longrightarrow G_i''$$
by $\varphi ''_{ij}(1) = \varphi _{i \tau (j)}(x_j)$. By (a) and (b) in Lemma \ref{lem:order-induction}, we have that $\mathcal J''$ is an $I''$-system. \vspace{.3truecm}

Now, we shall use Lemma \ref{lem:enlarging-K} to show that there is a well defined monoid homomorphism
$$\phi \colon \{0\}\sqcup \Big( \sum\limits _{i\in K} \chi _i(\widehat{X}_i)\Big) \longrightarrow \widetilde{M}(\mathcal J''),$$
sending $\chi _i(g)\in \chi (\widehat{X}_i)$ to $\chi_i(g)\in \widehat{G}''_i$ for $g\in \widehat{X}_i$  
(where we look $\{0\}\sqcup \Big( \sum\limits _{i\in K} \chi _i(\widehat{X}_i)\Big)$ as a submonoid of $\widetilde{M}(\mathcal J)$). 
 Indeed, we have a monoid homomorphism 
 $$\phi_F\colon F =\{0\}\sqcup \Big(\bigoplus_{i\in K} \widehat{X}_i\Big) \longrightarrow \widetilde{M} (\mathcal J '') $$
 sending $g\in \widehat{X}_i$ to $\chi _i(g)\in 
 \widetilde{M}(\mathcal J'')$ for $i\in K$, and by the choice of the groups $G_i'$, $i\in I'$, we have that $\phi_F(x)=\phi_F(y)$ for all
 $(x,y)\in \mathcal F$, where $\mathcal F$ is the finite set of generators of $\ker (\Phi\circ f)$ coming from Lemma \ref{lem:enlarging-K}.
 Therefore, the map $\phi_F$ factorizes through $(\Phi\circ f) (F) = \{0\}\sqcup \Big( \sum\limits _{i\in K} \chi _i(\widehat{X}_i)\Big)$, and we obtain 
 a well-defined monoid homomorphism $\phi$ from $\{0\}\sqcup \Big( \sum\limits _{i\in K} \chi _i(\widehat{X}_i)\Big)$ to  $\widetilde{M}(\mathcal J'')$ as claimed. Observe that $\phi_F$ restricts to a monoid homomorphism from $M'\subseteq \{0\}\sqcup \Big( \sum\limits _{i\in K} \chi _i(\widehat{X}_i)\Big)$ 
to $M(\mathcal J'')\subseteq \widetilde{M} (\mathcal J'')$, sending $\chi_i(x)\in  \chi_i (S_i)$ to $\chi_i (x)\in M(\mathcal J'')$ for all $i\in K$ and all $x\in S_i$.  Let $\gamma \colon M'\to M(\mathcal J'')$ be this homomorphism.\vspace{.3truecm}

Finally,  we define a monoid homomorphism $\delta \colon M(\mathcal J'')\to  M$ by $\delta (\chi _i(m_i)) = \chi _i (m_i)$ for $m_i\in M_i''$
and $\delta (\chi _j(1))= \chi _{\tau (j)}(x_j)\in M_{\tau (j)}$ for $j\in J'$.
If $j\in J'$, $i\in I'$, and $j<' i$, then $\tau  (j)<i$ in $I$ so that, for $x\in M_i''$ we have 
$$\delta (\chi _i (x)) +\delta (\chi _j(1)) = \chi _i (x) + \chi _{\tau (j)}(x_j) = \chi _i(x+\varphi _{i\tau (j)} (x_j)) = \chi_i (x+\varphi_{ij}''(1))=
\delta (\chi_i (x+\varphi_{ij}''(1))).$$
By Corollary \ref{cor:presentationofM}, we thus get that $\delta$ is a well-defined monoid homomorphism. Clearly $\delta\circ \gamma = \iota_{M'}$.
This concludes the proof.
\end{proof}

\section*{Acknowledgments}

Part of this work was done during visits of the first author to the Departamento 
de Matem\'aticas de la Universidad de C\'adiz (Spain), and of the second author to the Departament de Matem\`atiques de la Universitat Aut\`onoma de Barcelona (Spain). 
Both authors thank both centers for their kind hospitality.

We also thank the anonymous referee for carefully reading the paper and for providing many helpful suggestions.


\begin{thebibliography}{00}

\bibitem{Areal} \textsc{P.  Ara}, {The realization problem for von Neumann regular rings},
in \emph{Ring Theory 2007. Proceedings of the Fifth China-Japan-Korea
Conference}, (H. Marubayashi, K. Masaike, K. Oshiro, M. Sato, Eds.), Hackensack, NJ (2009)
World Scientific, pp.~21--37.


\bibitem{Aposet} \textsc{P. Ara}, The regular algebra of a poset, \emph{Trans. Amer. Math. Soc.} \textbf{362} (2010), no. 3, 1505--1546.

\bibitem{AraGood} \textsc{P. Ara, K.R. Goodearl}, Tame and wild refinement monoids, to appear in \emph{Semigroup Forum}, DOI 10.1007/s00233-014-9647-3,
\emph{arXiv:1405.7582v1 [math.RA]} (2014).


\bibitem{AGOP} \textsc{P. Ara, K. R. Goodearl, K.C. O'Meara, E. Pardo},
Separative cancellation for projective modules over exchange
rings, \emph{Israel J. Math.} \textbf{105} (1998), 105--137.

\bibitem{AMFP} \textsc{P. Ara, M.A. Moreno, E. Pardo}, Nonstable K-Theory for graph algebras, \emph{Algebra Rep. Th.} \textbf{10} (2007), 157-178.


\bibitem{APW08} \textsc{P Ara, F. Perera, F. Wehrung}, Finitely generated antisymmetric graph monoids, \emph{J. Algebra}~\textbf{320} (2008), 1963--1982.

\bibitem{Brook} \textsc{G. Brookfield}, Cancellation in primely generated refinement monoids, \emph{Algebra Universalis}
 \textbf{46} (2001), 343--371.

\bibitem{Dobb84} \textsc{H. Dobbertin}, Primely generated regular
refinement monoids, \emph{J. Algebra} \textbf{91} (1984), 166--175.

\bibitem{Freyd} \textsc{P. Freyd}, {Redei's finiteness theorem for commutative
semigroups}, \emph{Proc. Amer. Math. Soc.} {\bf 19} (1968), 1003.

\bibitem{GPW} \textsc{K.\,R. Goodearl, E. Pardo, F. Wehrung}, Semilattices of groups and inductive limits of Cuntz algebras, \emph{J. Reine Angew. Math.} \textbf{588} (2005), 1--25.

\bibitem{Ket}
\textsc{J. Ketonen}, {The structure of countable Boolean algebras}, \emph{Annals of Math.} \textbf{108} (1978), 41--89.

\bibitem{PaWe} \textsc{E. Pardo, F. Wehrung}, Semilattices of groups and nonstable K-theory of extended Cuntz limits, \emph{K-Theory} \textbf{37} (2006), 1--23.

\bibitem{PWunp} \textsc{E. Pardo, F. Wehrung}, {Generating classes of regular refinement monoids}, \emph{http://hdl.handle.net/10498/16139}. 

\bibitem{Pierce}
\textsc{R. S. Pierce}, {Countable Boolean algebras}, in \emph{Handbook of Boolean Algebras}, Vol.~3 (J. D. Monk and R. Bonnet, Eds.), Amsterdam (1989) North-Holland, pp.~775--876.

\bibitem{Whr3} \textsc{F. Wehrung}, Embedding simple commutative
monoids into simple refinement monoids, \emph{Semigroup Forum
}\textbf{56} (1998), 104--129.



\end{thebibliography}
\end{document}